\def\a{\alpha}
\def\d{\delta}
\def\D{\Delta}
\def\g{\gamma}
\def\Om{\Omega}
\def\s{\sigma}
\def\t{\theta}
\def\ve{\varepsilon}
\def\vp{\varphi}
\def\Id{\mathop{\rm Id}\nolimits}
\def\ot{\otimes}
\def\nb{\nabla}
\def\Db{\blacktriangledown}
\def\rt{\triangleright}
\def\dcc{\blacktriangleright\hspace{-4pt}\blacktriangleleft }
\def\cl{\blacktriangleright\hspace{-4pt} < }
\def\ccr{>\hspace{-4pt} \blacktriangleleft }
\def\D{\Delta}
\def\Om{\Omega}
\def\Id{\mathop{\rm Id}\nolimits}
\def\Im{\mathop{\rm Im}\nolimits}
\newcommand{\ps}[1]{~\hspace{-4pt}_{^{(#1)}}}
\newcommand{\pr}[1]{~\hspace{-4pt}_{^{\{#1\}}}}
\newcommand{\ns}[1]{~\hspace{-4pt}_{_{{<#1>}}}}
\newcommand{\nsb}[1]{~\hspace{-4pt}_{^{[#1]}}}
\newcommand{\G}[1]{\mathfrak{#1}}
\newcommand{\C}[1]{\mathcal{#1}}
\renewcommand{\geq}{\geqslant}
\numberwithin{equation}{section}
\newtheorem{theorem}{Theorem}[section]
\newtheorem{proposition}{Proposition}[section]
\newtheorem{corollary}[theorem]{Corollary}
\theoremstyle{definition}
\newtheorem*{remark}{Remark}
\newtheorem{definition}[theorem]{Definition}
\title{Bicocycle Double Cross Constructions}
\author{O\u{g}ul Esen}
\address{Department of Mathematics, Gebze Technical University,  41400 Gebze-Kocaeli, Turkey}
\email{oesen@gtu.edu.tr}
\author{Partha Guha}
\address{Department of Mathematics,
Khalifa University\\
P.O. Box 127788, Zone -1 Abu Dhabi, UAE}
\email{partha.guha@ku.ac.ae}
\author{Serkan Sütlü}
\address{Department of Mathematics, I\c{s}ik University, 34980 \c{S}ile-\.{I}stanbul, Turkey}
\email{serkan.sutlu@isikun.edu.tr}
\begin{document}

\begin{abstract}
We introduce the notion of a bicocycle double cross product (resp. sum) Lie group (resp. Lie algebra), and a bicocycle double cross product bialgebra, generalizing the unified products. On the level of Lie groups the construction yields a Lie group on the product space of two pointed manifolds, none of which being necessarily a subgroup. On the level of Lie algebras, similarly, a Lie algebra is obtained on the direct sum of two vector spaces, none of which is required to be a subalgebra. Finally, on the quantum level the theory presents a bialgebra, on the tensor product of two (co)algebras that are not necessarily sub-bialgebras, the semidual of which being a cocycle bicrossproduct bialgebra.
\end{abstract}

\keywords{unified product; double cross product Lie groups; double cross sum Lie algebras; double cross product bialgebras}


\maketitle

\tableofcontents

\setlength{\parskip}{0.5cm}

\section{Introduction}

Two main problems, concerning an algebraic (or geometric, or topological) object, may be expressed as to classify its various extensions, and to seek its possible factorizations.

As was pointed out in \cite{AgorMili11,AgorMili14,AgorMili14-II} the extension problem (for groups) may be traced back to \cite{Hold1895}, and has ever since attracted much research from a variety of point of views. Each solution, then, gave rise to a novel cohomology theory classifying the type of extension in question, \cite{ChevEile48,HochSerr53-II,HochSerr53,Take81}. 

On the level of Lie groups the problem was first studied in \cite{Sche25,Sche26,Szep49,Szep50,Zapp42}, and then reconsidered in \cite{LuWe90,Maji90-II,Maji90,Take81} (and many others) under different names. As for the Lie algebra level, we refer the reader to \cite{Kac68,Mood67,Vira70}, and for relatively more recent work, to \cite{BeggMaji90,Maji90-II}. 

In an effort to unify the quantum mechanics and gravity, Hopf algebras were pointed out in \cite{Majid-book} as the objects that can offer the correct framework. These self-dual objects are widely considered as the quantum analogues of both Lie groups and Lie algebras. On this level, we refer the reader to
\cite{Majid-thesis,Sing70,Sing72}, as well as \cite{Brze97-II,BrzeHaja99,BrzeHaja09,BrzeMaji98} for an incomplete list. 

The origins of the factorization problem, on the other hand, may be found in \cite{Ore37,SzepRede50}. For more recent work, we may refer the reader to \cite{BeggGoulMaji96,Majid99,MajiRueg94}. The importance of the factorization stems from its applications in many other problems. Among such instances, we may count the role in the computations of the K-theory invariants \cite{HadfMaji07,MoscRang07,MoscRang09}, and more recently the applications in geometric mechanics (more precisely, Lagrangian and Hamiltonian dynamics) \cite{EsenGrmeGumrPave19,EsenSutlKude21,EsenSard2021,EsenSutl20,EsenSutl16,EsenSutl17,EsenSutl21}.

The extension problem and the factorization problem were recently unified and studied extensively under the title of extending structures through \cite{AgorMili11,AgorMili13,AgorMili14-III,AgorMili14,AgorMili14-II,AgorMili15,AgorMili15-II}. In the categorical language,  given a forgetful functor $F:\C{C}\to \C{D}$ with two objects $C\in \C{C}$ and $D\in \C{D}$ so that $F(C)$ being a sub-object in $D \in \C{D}$, the extending structures problem (ES problem for short) were spelled out in \cite{AgorMili11} as to describe all mathematical structures on $D$ such that $D$ becomes an object of $\C{C}$, in such a way that $C$ becomes a sub-object of $D\in\C{C}$.

Accordingly, the content of \cite{AgorMili14-II} may be summarized as the study of the ES problem corresponding to the forgetful functor $F:\C{G}r\to\C{S}et$ from the category of groups to the category of sets, while \cite{AgorMili14} considered the ES problem corresponding to $F:\G{L}ie\to\C{V}ect$ from the category of Lie algebras to the category of vector spaces. In the quantum level, on the other hand, \cite{AgorMili11} concerns the same problem associated to the functor $F:\C{H}opf \to \C{C}o\C{A}lg$ from the category of Hopf algebras to the category of coalgebras.

Put in different words; the ES problem for groups is about the extensions (resp. factorizations) of a group by a pointed set, while for Lie algebras it is the study of extensions (resp. factorizations) of a Lie algebra by a vector space, and on the quantum level the ES problem is to determine the extensions (resp. factorizations) of a Hopf algebra by a coalgebra.

By a slight abuse of language, we shall hereby reconsider the ES problem along with only a faithful functor. More precisely; in the case of groups, focusing on Lie groups only, we shall address in Proposition \ref{prop-bicocycle-double-cross-prod-gr} the extension of a pointed manifold to a Lie group by another pointed manifold. From the point of view of the factorizations, our result Proposition \ref{prop-universal-II-gr} illustrates the factorization of a Lie group into two pointed manifolds. As for the Lie algebras, we shall this time consider in Proposition \ref{prop-bicocycle-double-cross-sum} the extension of a vector space to a Lie algebra by another vector space. In other words, we shall present in Proposition \ref{prop-bicocycle-double-cross-sum-universal} the decomposition of a Lie algebra into two complementary vector spaces. Finally, in the quantum level we shall consider bialgebras, and present in Proposition \ref{prop-bicocycle-double-cross-prod} the extension of a coalgebra to a bialgebra by another coalgebra. Equivalently, we shall show in Proposition \ref{prop-universal-II} the decomposition of a bialgebra into two coalgebras. Also in this quantum level we shall address the (dual) ES problem along with the functor $F:\C{H}opf \to \C{A}lg$. Namely, in Proposition \ref{prop-cocycle-double-cross-coprod} (resp. Proposition \ref{prop-universal-cross-coprod}) we study the extension (resp. factorization) of a bialgebra by an algebra (resp. into a bialgebra and an algebra). 
 
The manuscript may be outlined as follows. 

In Section \ref{Sec-LA} we shall present the bicocycle double cross constructions on Lie algebras and Lie groups. To be more precise, in Subsection \ref{subsect-bicocycle-double-sum} we establish a Lie algebra out of two subspaces that are not necessarily Lie subalgebras. We then analyse the construction from the decomposition point of view, namely from the point of view of the decomposition of a Lie algebra into two subspaces. In Subsection \ref{subsect-Cocycle-double-cross-product-groups}, on the other hand, we study the similar construction from the point of view of Lie groups, and we present the construction of a Lie group based on two pointed submanifolds. 

The quantum counterpart of both constructions are considered in Section \ref{Sec-Quan}. In Subsection \ref{subsect-Cocycle-double-cross-product-bialgebra} we review the quantum extending structures, as well as their dual counterparts in Subsection \ref{subsect-Cocycle-double-cross-coproduct-bialgebra}. The top level of the hierarchy, concerning the extensions of quantum objects, belongs to bicocycle double cross products, which is presented in Subsection \ref{subsect-Bicocycle-double-cross-coproduct-bialgebra}.

\subsubsection*{Notation and Conventions}
 
Unadorned morphisms are assumed to be the morphisms of the ambient category, the category of vector spaces, groups, bialgebras etc. and the additional properties (such as algebra morphism, coalgebra morphism etc.) will be noted on, or under the arrows. On the level of Lie algebras we shall consider a Lie algebra $\G{g}$, with a decomposition $\G{g} \cong \G{m}\,\oplus\,\G{h}$ into vector spaces, the generic elements of which to be denoted by $\xi,\xi',\xi'',...\in \G{m}$, $Z,Z',Z'',... \in \G{h}$, and $(\xi,Z),(\xi',Z') \in \G{g}$. Similarly, we shall consider a Lie group $G$, and a decomposition $G\cong M \times H$ into manifolds. We shall, in this case, refer the generic elements as $x,x',x'',...\in M$ and $h,h',h'',... \in H$, as well as $(x,h),(x',h') \in G$. On the level of bialgebras, we shall make use of decompositions $\C{G}\cong \G{M}\ot \C{H}$ into (co)algebras, and the notations $x,x',x'',\ldots \in \G{M}$, $h,h',h'',\ldots \in \C{H}$, and $x\ot h, x'\ot h',\ldots \in \C{G}$ for the generic elements. Regarding the coalgebra structure maps, we shall use $\D$ for the comultiplication, and $\ve$ for the counit. We shall also employ the Sweedler notation for the comultiplication; namely, $\D(x) = x\ps{1} \ot x\ps{2}$, $\D(h) = h\ps{1} \ot h\ps{2}$, etc. suppressing the summations.

\section{Bicocycle Double Cross Constructions for Lie Algebras and Lie Groups} \label{Sec-LA}

In this section we shall introduce bicocycle double cross sum Lie algebras and bicocycle double cross product Lie groups. Both constructions generalize the unified products of \cite{AgorMili14} in the case of Lie algebras, and of \cite{AgorMili14-II} in the case of Lie groups.

\subsection{Bicocycle Double Cross Sum Lie Algebras}\label{subsect-bicocycle-double-sum}~

In this subsection we shall introduce the construction of a Lie algebra out of two subspaces. More precisely, given two linear spaces $\G{m}$ and $\G{h}$ we shall record the necessary and sufficient conditions for $\G{g}:=\G{m} \oplus \G{h}$ to be a Lie algebra. 

Accordingly, let $\G{m}$ and $\G{h}$ be equipped with
\begin{align}
&\phi:\G{m}\ot \G{m} \to \G{m}, \qquad \xi\ot \xi'\mapsto \phi(\xi\ot\xi') =:\phi(\xi,\xi'), \label{phi-map} \\
& \t:\G{m}\ot \G{m} \to \G{h}, \qquad \xi\ot \xi'\mapsto \t(\xi\ot\xi') =:\t(\xi,\xi'), \label{theta-map}
\end{align}
and
\begin{align}
& \mu:\G{h}\ot \G{h} \to \G{h}, \qquad Z\ot Z'\mapsto \mu(Z\ot Z') =:\mu(Z,Z'), \label{mu-map} \\
& \g:\G{h}\ot \G{h} \to \G{m}, \qquad Z\ot Z'\mapsto \g(Z\ot Z') =:\g(Z,Z'), \label{gamma-map}
\end{align}
as well as the cross-relations given by
\begin{align}
&\vp:\G{h}\ot \G{m}\to \G{m}, \qquad Z\ot \xi\mapsto \vp(Z\ot \xi)=:\vp(Z,\xi) , \label{left-action} \\
& \psi:\G{h}\ot \G{m}\to \G{h}, \qquad Z\ot \xi\mapsto\psi(Z\ot \xi) =:\psi(Z,\xi). \label{right-action}
\end{align}

\begin{proposition}\label{prop-bicocycle-double-cross-sum}
Given a pair $(\G{m},\G{h})$ of vector spaces, equipped with the linear maps \eqref{left-action}, \eqref{right-action}, \eqref{phi-map}, \eqref{theta-map}, \eqref{mu-map}, and \eqref{gamma-map}; the direct sum
$\G{g}:=\G{m}\,\oplus\,\G{h}$ is a Lie algebra via 
\begin{equation}\label{bicocycle-double-cross-sum-bracket}
[\xi+Z,\xi'+Z'] = \Big(\phi(\xi,\xi') + \vp(Z,\xi') - \vp(Z', \xi) + \g(Z,Z')\Big)+\Big(\mu(Z,Z') + \psi(Z ,\xi') - \psi(Z', \xi) +\t(\xi,\xi') \Big).
\end{equation}
if and only if 
\begin{align}
& \phi(\xi,\xi) = 0, \quad  \t(\xi,\xi) = 0, \quad \g(Z,Z)=0, \quad \mu(Z,Z) =0, \label{phi-xi-xi-theta-xi-xi-II}\\
& \vp(Z, \phi(\xi,\xi')) = \phi(\vp(Z,\xi), \xi') + \phi(\xi,\vp(Z,\xi')) + \vp(\psi(Z,\xi), \xi') - \vp(\psi(Z,\xi'),\xi) + \g(\t(\xi,\xi'), Z), \label{h-on-phi-II} \\
& \mu(Z,\t(\xi,\xi'))  = \t(\vp(Z,\xi), \xi') + \t(\xi, \vp(Z, \xi')) + \psi(\psi(Z,\xi), \xi') - \psi(\psi(Z,\xi')), \xi)  - \psi(Z, \phi(\xi,\xi')), \label{h-on-theta-and-psi-being-action-II} \\
& \vp(\mu(Z,Z'), \xi) = \vp(Z, \vp(Z',\xi)) - \vp(Z, \vp(Z',\xi)) + \g(\psi(Z,\xi),Z') + \g( Z,\psi(Z',\xi)) - \phi(\g(Z, Z'), \xi), \label{m-is-h-module-II}\\
&\psi(\mu(Z, Z') ,\xi)=  \mu(Z,\psi(Z',\xi))+\mu(\psi(Z,\xi),Z') + \psi(Z,\vp(Z',\xi)) - \psi(Z',\vp(Z,\xi)) - \t(\g(Z, Z'), \xi), \label{h-bracket-psi-comp-II} \\
& \sum_{(\xi,\xi',\xi'')}\, \phi(\phi(\xi,\xi'),\xi'') + \sum_{(\xi,\xi',\xi'')}\, \vp(\t(\xi,\xi'),\xi'') = 0, \label{phi-Jacobi-and-theta-action-II} \\
& \sum_{(\xi,\xi',\xi'')}\, \psi(\t(\xi,\xi'),\xi'') + \sum_{(\xi,\xi',\xi'')}\, \t(\phi(\xi,\xi'),\xi'') = 0, \label{cocycle-condition-for-theta-II} \\
& \sum_{(Z,Z',Z'')}\, \g(\mu(Z,Z'),Z'') - \sum_{(Z,Z',Z'')}\, \vp(Z,\g(Z',Z'')) = 0, \label{cocycle-condition-for-gamma-II} \\
& \sum_{(Z,Z',Z'')}\, \mu(\mu(Z,Z'),Z'') - \sum_{(Z,Z',Z'')}\, \psi(Z,\g(Z',Z'')) = 0, \label{mu-Jacobi-and-psi-action-II} 
\end{align}
where the summations are over the cyclic permutations of the indicated elements.
\end{proposition}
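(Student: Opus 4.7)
The statement is an iff characterization, so the strategy is to verify the two defining axioms of a Lie algebra—anti-symmetry and the Jacobi identity—directly against the bracket \eqref{bicocycle-double-cross-sum-bracket}, and to show that the listed conditions are exactly what the axioms demand. Since everything in sight is bilinear, I can split each axiom into components along $\G{m}$ and $\G{h}$ and, for the Jacobi identity, into cases according to how many inputs are drawn from $\G{m}$ versus $\G{h}$.

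First I would handle anti-symmetry. Evaluating \eqref{bicocycle-double-cross-sum-bracket} on a diagonal element $\xi+Z$ gives
\begin{equation*}
[\xi+Z,\xi+Z] \;=\; \bigl(\phi(\xi,\xi)+\g(Z,Z)\bigr)\;+\;\bigl(\mu(Z,Z)+\t(\xi,\xi)\bigr),
\end{equation*}
since the $\vp$ and $\psi$ terms cancel. Specialising $Z=0$ and $\xi=0$ in turn, and using that the two summands live in complementary subspaces, yields precisely the four vanishing conditions in \eqref{phi-xi-xi-theta-xi-xi-II}, and conversely these four conditions make the above diagonal expression vanish. Thus anti-symmetry is equivalent to \eqref{phi-xi-xi-theta-xi-xi-II}.

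Next I would analyse the Jacobi identity $\sum_{\mathrm{cyc}}[[u,u'],u'']=0$. By trilinearity this splits into four genuinely distinct cases depending on how the three inputs distribute between $\G{m}$ and $\G{h}$. In each case I would expand the inner bracket using \eqref{bicocycle-double-cross-sum-bracket}, substitute the result into the outer bracket, expand again, and then separate the whole expression into its $\G{m}$-component and its $\G{h}$-component, since these must vanish independently. The dictionary I expect is as follows: the triple $(\xi,\xi',\xi'')\in\G{m}^{3}$ produces \eqref{phi-Jacobi-and-theta-action-II} from its $\G{m}$-part and \eqref{cocycle-condition-for-theta-II} from its $\G{h}$-part; the triple $(Z,Z',Z'')\in\G{h}^{3}$ produces \eqref{cocycle-condition-for-gamma-II} and \eqref{mu-Jacobi-and-psi-action-II} respectively; the mixed triple $(\xi,\xi',Z)$ produces \eqref{h-on-phi-II} and \eqref{h-on-theta-and-psi-being-action-II} (using the already-established anti-symmetry of $\phi$, $\t$ to rewrite terms like $\phi(\vp(Z,\xi'),\xi)$ as $-\phi(\xi,\vp(Z,\xi'))$); and the mixed triple $(\xi,Z,Z')$ produces \eqref{m-is-h-module-II} and \eqref{h-bracket-psi-comp-II}. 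In each case the cyclic sum in the Jacobi identity exactly reproduces the cyclic sums appearing in the corresponding conditions, and the converse direction follows by reading the same computation backwards: assuming the listed identities, every term in $\sum_{\mathrm{cyc}}[[u,u'],u'']$ regroups to zero componentwise.

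The only real obstacle here is bookkeeping: each mixed triple produces, after two expansions, on the order of a dozen terms in each component, and one must carefully segregate them into the five families indexed by the structure maps $\phi,\t,\mu,\g,\vp,\psi$ before identifying them with the displayed equations. To keep this manageable I would first record the \emph{partial brackets} $[\xi,\xi']=\phi(\xi,\xi')+\t(\xi,\xi')$, $[Z,Z']=\g(Z,Z')+\mu(Z,Z')$, and $[Z,\xi]=\vp(Z,\xi)+\psi(Z,\xi)=-[\xi,Z]$ as lemmas, and then feed them into the Jacobiator for each of the four cases. With that organisation the verification becomes a mechanical—but still somewhat lengthy—matching exercise, and no further ideas are needed.
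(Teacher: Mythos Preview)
Your proposal is correct and follows essentially the same approach as the paper: anti-symmetry yields \eqref{phi-xi-xi-theta-xi-xi-II}, and the Jacobi identity is split into the four cases $(\xi,\xi',\xi'')$, $(Z,Z',Z'')$, $(\xi,\xi',Z)$, $(\xi,Z,Z')$, each of which, after projecting to $\G{m}$- and $\G{h}$-components, produces exactly the pairs of conditions you list. The paper carries out precisely this case-by-case expansion, so your plan matches it term for term.
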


\begin{proof}
Let us begin with the anti-symmetry of the bracket \eqref{bicocycle-double-cross-sum-bracket}. We have
\[
[\xi,\xi] = 0
\]
if and only if 
\[
\phi(\xi,\xi)=0, \qquad \t(\xi,\xi) =0.
\]
Similarly we obtain
\[
[Z,Z] = 0
\]
if and only if 
\[
\g(Z,Z)=0, \qquad \mu(Z,Z) =0.
\]
As such, the bracket \eqref{bicocycle-double-cross-sum-bracket} is anti-symmetric if and only if \eqref{phi-xi-xi-theta-xi-xi-II} holds. 

As for the mixed Jacobi identities, let us first consider 
\begin{equation}\label{J-I-II}
[ [\xi, \xi'],Z] + [[\xi', Z], \xi] + [[Z, \xi], \xi'] = 0,
\end{equation}
where
\begin{align*}
 & [ [\xi, \xi'],Z] = [\phi(\xi,\xi')+ \t(\xi,\xi'),\, Z] = [\phi(\xi,\xi'), Z] + [\t(\xi,\xi'), Z] =\\
 & \Big(-\vp(Z, \phi(\xi,\xi')) + \g(\t(\xi,\xi'), Z)\Big)+\Big(\mu(\t(\xi,\xi'), Z) -\psi(Z, \phi(\xi,\xi')) \Big).
\end{align*}
Similarly,
\begin{align*}
& [[\xi', Z], \xi]  = [-\vp(Z,\xi') -\psi(Z,\xi'),\, \xi] = - [\vp(Z,\xi') , \xi] - [\psi(Z,\xi'), \xi] = \\
& =\Big( -\phi(\vp(Z,\xi'), \xi) - \vp(\psi(Z,\xi'), \xi) \Big) + \Big( - \psi(\psi(Z,\xi'), \xi) -\t(\vp(Z,\xi'), \xi)\Big),
\end{align*}
and finally
\begin{align*}
 & [[Z, \xi], \xi']  =  [\vp(Z,\xi)+ \psi(Z,\xi), \xi']  = [\vp(Z,\xi), \xi']+ [\psi(Z,\xi), \xi'] =\\
&  =\Big(\phi(\vp(Z,\xi), \xi') + \vp(\psi(Z,\xi), \xi')\Big) +\Big( \psi(\psi(Z,\xi), \xi') + \t(\vp(Z,\xi), \xi')\Big).
\end{align*}
Hence, \eqref{J-I-II} is satisfied if and only if \eqref{h-on-phi-II} and \eqref{h-on-theta-and-psi-being-action-II} hold.

Let us next consider
\begin{equation}\label{J-II-II}
[[Z, Z'], \xi] + [[Z', \xi],Z] + [[\xi, Z], Z'] = 0.  
\end{equation}
This time we have
\begin{align*}
& [[Z, Z'], \xi] = [\g(Z, Z') + \mu(Z,Z'), \xi] = [\g(Z, Z'), \xi] + [\mu(Z,Z'), \xi] = \\
& \Big(\phi(\g(Z, Z'), \xi) + \vp(\mu(Z,Z'), \xi)\Big) + \Big(\psi(\mu(Z,Z'), \xi) +\t(\g(Z, Z'), \xi)\Big)
\end{align*}
while,
\begin{align*}
& [[Z', \xi],Z]  = [\vp(Z',\xi) + \psi(Z',\xi),Z] =  [\vp(Z',\xi) ,Z]+  [\psi(Z',\xi),Z] = \\
& \Big(-\vp(Z, \vp(Z',\xi)) + \g(\psi(Z',\xi), Z)\Big) + \Big(\mu(\psi(Z',\xi), Z)-\psi(Z,\vp(Z',\xi)) \Big),
\end{align*}
and
\begin{align*}
& [[\xi, Z], Z'] =  [-\vp(Z,\xi) -\psi(Z,\xi), Z'] = - [\vp(Z,\xi) , Z'] - [\psi(Z,\xi), Z'] = \\
& \Big(\vp(Z', \vp(Z,\xi)) - \g(\psi(Z,\xi),Z')\Big) +\Big( -\mu(\psi(Z,\xi),Z') +\psi(Z',\vp(Z,\xi)) \Big).
\end{align*}
As such, \eqref{J-II-II} is satisfied if and only if \eqref{m-is-h-module-II} and \eqref{h-bracket-psi-comp-II} hold. 

We next proceed to the Jacobi identity 
\begin{equation}\label{J-III-II}
[[\xi, \xi'], \xi''] + [[\xi', \xi''], \xi] + [[\xi'', \xi], \xi'] = 0.
\end{equation}
For the first summand we have,
\begin{align*}
& [[\xi, \xi'], \xi''] = [\phi(\xi,\xi') + \t(\xi,\xi'), \xi''] = [\phi(\xi,\xi') , \xi''] + [ \t(\xi,\xi'), \xi'']  = \\
& \Big(\phi(\phi(\xi,\xi'),\xi'') + \vp(\t(\xi,\xi'),\xi'')\Big)+\Big( \psi(\t(\xi,\xi'),\xi'') + \t(\phi(\xi,\xi'),\xi'')\Big).
\end{align*}
Similarly, we observe that
\begin{align*}
 & [[\xi', \xi''], \xi] = [\phi(\xi',\xi'')+ \t(\xi',\xi''),\xi] = [\phi(\xi',\xi''),\xi]+ [\t(\xi',\xi''),\xi] = \\
& \Big(\phi(\phi(\xi',\xi''),\xi) + \vp(\t(\xi',\xi''),\xi)\Big) +\Big( \psi(\t(\xi',\xi''),\xi) + \t(\phi(\xi',\xi''),\xi)\Big).
\end{align*}
Finally,
\begin{align*}
 & [[\xi'', \xi], \xi'] = [\phi(\xi'',\xi)+ \t(\xi'',\xi),\xi'] =  [\phi(\xi'',\xi),\xi']+  [\t(\xi'',\xi),\xi'] = \\
& \Big(\phi(\phi(\xi'',\xi),\xi') + \vp(\t(\xi'',\xi),\xi')\Big)+\Big( \psi(\t(\xi'',\xi),\xi') + \t(\phi(\xi'',\xi),\xi')\Big).
\end{align*}
Accordingly, \eqref{J-III-II} is satisfied if and only if \eqref{phi-Jacobi-and-theta-action-II} and \eqref{cocycle-condition-for-theta-II} hold. 

We are finally left with the Jacobi identity
\begin{equation}\label{J-IV-II}
[[Z,Z'], Z''] + [[Z', Z''], Z] + [[Z'', Z], Z'] = 0.
\end{equation}
Just as above, we have
\begin{align*}
& [[Z, Z'], Z''] = [\g(Z,Z') + \mu(Z,Z'), Z''] = [\g(Z,Z') , Z''] +[ \mu(Z,Z'), Z'']  = \\
& \Big(-\vp(Z'',\g(Z,Z')) + \g(\mu(Z,Z') , Z'')\Big)+\Big( -\psi(Z'',\g(Z,Z')) + \mu(\mu(Z,Z') , Z'')\Big),
\end{align*}
where similarly
\begin{align*}
& [[Z', Z''], Z] = [\g(Z',Z'') + \mu(Z',Z''), Z] = [\g(Z',Z'') , Z] +[ \mu(Z',Z''), Z]  = \\
& \Big(-\vp(Z,\g(Z',Z'')) + \g(\mu(Z',Z'') , Z)\Big)+\Big( -\psi(Z,\g(Z',Z'')) + \mu(\mu(Z',Z'') , Z)\Big),
\end{align*}
and
\begin{align*}
& [[Z'', Z], Z'] = [\g(Z'',Z) + \mu(Z'',Z), Z'] = [\g(Z'',Z) , Z'] +[ \mu(Z'',Z), Z']  = \\
& \Big(-\vp(Z',\g(Z'',Z)) + \g(\mu(Z'',Z) , Z')\Big)+\Big( -\psi(Z',\g(Z'',Z)) + \mu(\mu(Z'',Z) , Z')\Big).
\end{align*}
Accordingly, \eqref{J-IV-II} is satisfied if and only if \eqref{cocycle-condition-for-gamma-II} and \eqref{mu-Jacobi-and-psi-action-II} hold.
\end{proof}

We shall call the pair $(\G{m},\G{h})$ of vector spaces satisfying \eqref{phi-xi-xi-theta-xi-xi-II} - \eqref{mu-Jacobi-and-psi-action-II} a \emph{bicocycle matched pair}, and we shall employ the notation $\G{m}{\,}_{\g\hspace{-0.1cm}}\bowtie_\t \G{h} := \G{m}\oplus \G{h}$ for the \emph{bicocycle double cross sum} Lie algebra of Proposition \ref{prop-bicocycle-double-cross-sum}.

From the decomposition point of view, we have the following generalization of \cite[Prop. 8.3.2]{Majid-book} and \cite[Thm. 3.4]{AgorMili14}.

\begin{proposition}\label{prop-bicocycle-double-cross-sum-universal}
Any Lie algebra $\G{g}$ with two complementary subspaces $\G{m},\G{h} \subseteq \G{g}$ is isomorphic, as Lie algebras, to the bicocycle double cross sum of these subspaces; that is, $\G{g}\cong \G{m}{\,}_{\g\hspace{-0.1cm}}\bowtie_\t \G{h}$, where the linear maps \eqref{left-action}, \eqref{right-action}, \eqref{phi-map}, \eqref{theta-map}, \eqref{mu-map}, and \eqref{gamma-map} may be recovered from
\[
[\xi,\xi'] = \phi(\xi,\xi') + \t(\xi,\xi'), \quad [Z,\xi] = \vp(Z, \xi) + \psi(Z , \xi), \quad [Z,Z'] = \g(Z,Z') + \mu(Z,Z').
\]
\end{proposition}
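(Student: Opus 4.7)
The plan is to reverse-engineer the six structure maps from the Lie bracket of $\G{g}$, and then invoke Proposition \ref{prop-bicocycle-double-cross-sum} to match the two Lie algebra structures. Since $\G{g} = \G{m} \oplus \G{h}$ as vector spaces, every element of $\G{g}$ has unique $\G{m}$- and $\G{h}$-components, so the canonical projections $\pi_{\G{m}}: \G{g} \to \G{m}$ and $\pi_{\G{h}}: \G{g} \to \G{h}$ are well defined.

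First I would define the six linear maps by decomposing the Lie bracket on pairs of arguments from $\G{m}$ and $\G{h}$. Explicitly, for $\xi, \xi' \in \G{m}$ and $Z, Z' \in \G{h}$, set
\begin{align*}
\phi(\xi,\xi') &:= \pi_{\G{m}}[\xi,\xi'], & \t(\xi,\xi') &:= \pi_{\G{h}}[\xi,\xi'], \\
\g(Z,Z') &:= \pi_{\G{m}}[Z,Z'], & \mu(Z,Z') &:= \pi_{\G{h}}[Z,Z'], \\
\vp(Z,\xi) &:= \pi_{\G{m}}[Z,\xi], & \psi(Z,\xi) &:= \pi_{\G{h}}[Z,\xi].
\end{align*}
With these definitions one automatically has the three decomposition identities displayed in the statement, and by the uniqueness of the direct sum decomposition the bracket on an arbitrary pair $(\xi+Z, \xi'+Z') \in \G{g}$ expands bilinearly into exactly the expression \eqref{bicocycle-double-cross-sum-bracket}.

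Next I would verify that the pair $(\G{m},\G{h})$ with these maps is a bicocycle matched pair. This step requires no new computation: the antisymmetry $[v,v]=0$ for $v \in \G{m}$ and $v \in \G{h}$ projects onto \eqref{phi-xi-xi-theta-xi-xi-II}, and each of the four mixed Jacobi identities in $\G{g}$, namely those on triples $(\xi,\xi',Z)$, $(Z,Z',\xi)$, $(\xi,\xi',\xi'')$ and $(Z,Z',Z'')$, projects separately onto $\G{m}$ and $\G{h}$ to yield the pairs \eqref{h-on-phi-II}--\eqref{h-on-theta-and-psi-being-action-II}, \eqref{m-is-h-module-II}--\eqref{h-bracket-psi-comp-II}, \eqref{phi-Jacobi-and-theta-action-II}--\eqref{cocycle-condition-for-theta-II}, and \eqref{cocycle-condition-for-gamma-II}--\eqref{mu-Jacobi-and-psi-action-II}, respectively. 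These are precisely the calculations already performed in the proof of Proposition \ref{prop-bicocycle-double-cross-sum}, read in reverse: since the equations there were of the form "Jacobi $\Leftrightarrow$ (matched pair axioms)", the forward direction gives exactly what is needed here.

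Finally, Proposition \ref{prop-bicocycle-double-cross-sum} now applies, producing the bicocycle double cross sum $\G{m}{\,}_{\g\hspace{-0.1cm}}\bowtie_{\t}\G{h}$, and the identity map on the underlying vector space $\G{m} \oplus \G{h}$ is a Lie algebra isomorphism $\G{g} \cong \G{m}{\,}_{\g\hspace{-0.1cm}}\bowtie_{\t}\G{h}$, because both brackets agree on each of the three types of generators by the very definition of the structure maps, and are bilinearly extended. The only mildly subtle point is conceptual rather than computational: one must observe that projecting a single Jacobi identity in $\G{g}$ onto its two components produces \emph{two} independent conditions in the matched pair axioms, so no condition from Proposition \ref{prop-bicocycle-double-cross-sum} goes unaccounted for and no redundancy is introduced.
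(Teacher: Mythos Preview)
Your proposal is correct and is exactly the natural argument: the paper in fact states this proposition without proof, treating it as an immediate consequence of Proposition~\ref{prop-bicocycle-double-cross-sum} read in the converse direction. Your write-up spells out precisely that converse, so there is nothing to add.
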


A few comments are in order.

\begin{remark}
This time both $\G{m}$ and $\G{h}$ are subspaces. Accordingly, neither \eqref{right-action} is expected to be a (right) action, nor \eqref{left-action} is a priori (left) action. In view of \eqref{h-on-theta-and-psi-being-action-II}, \eqref{right-action} being a right action is equivalent to the (left) \emph{adjoint} action of $\G{h}$ on $\Im\t$ being given by \emph{derivations}; namely
\[
\mu(Z,\t(\xi,\xi'))  = \t(\vp(Z,\xi), \xi') + \t(\xi, \vp(Z, \xi')).
\]
Similarly, in view of \eqref{m-is-h-module-II}, \eqref{left-action} being a left action is equivalent to the (right) adjoint action of $\G{m}$ on $\Im\g$ being given by derivations; that is,
\[
\phi(\g(Z, Z'), \xi) = \g(\psi(Z,\xi),Z') + \g( Z,\psi(Z',\xi)).
\]
On the other hand, neither \eqref{phi-map}, nor \eqref{mu-map} is a priori a Lie bracket. Indeed, \eqref{phi-Jacobi-and-theta-action-II} reveals that \eqref{phi-map} satisfying the Jacobi identity is equivalent to 
\[
\sum_{(\xi,\xi',\xi'')}\, \vp(\t(\xi,\xi'),\xi'' )= 0.
\]
For instance, if \eqref{theta-map} or \eqref{left-action} is trivial, then \eqref{phi-map} determines a Lie bracket on $\G{m}$. Quite similarly, it follows from \eqref{mu-Jacobi-and-psi-action-II} that \eqref{mu-map} satisfying the Jacobi identity is equivalent to 
\[
\sum_{(Z,Z',Z'')}\, \psi(Z,\g(Z',Z'')) = 0 .
\]
As such, if \eqref{gamma-map} or \eqref{right-action} is trivial, then \eqref{mu-map} defines a Lie algebra bracket on $\G{h}$. We do note also that if  \eqref{phi-map} is a Lie bracket and \eqref{right-action} is a right action of $\G{m}$ on $\G{h}$, the condition \eqref{cocycle-condition-for-theta-II} can simply be expressed as
\[
\t\in H^2(\G{m},\G{h}),
\]
that is, \eqref{theta-map} is a 2-cocycle in the Lie algebra cohomology of $\G{m}$, with coefficients in $\G{h}$; with the right $\G{m}$-module structure on $\G{h}$ being given by \eqref{right-action}. If, furthermore, \eqref{left-action},\eqref{gamma-map}, and \eqref{mu-map} are all trivial, then the bicocycle double cross sum $\G{m}{\,}_{\g\hspace{-0.1cm}}\bowtie_\t \G{h}$ becomes $\G{m}\ltimes_\t\G{h}$; the abelian extension of $\G{m}$ by $\G{h}$. Symmetrically, if  \eqref{mu-map} is a Lie bracket and \eqref{left-action} determines a left action of $\G{h}$ on $\G{m}$, then the condition \eqref{cocycle-condition-for-gamma-II} turns out to 
\[
\g\in H^2(\G{h},\G{m}),
\]
that is, \eqref{gamma-map} is a 2-cocycle in the Lie algebra cohomology of $\G{h}$, with coefficients in $\G{m}$; with the left $\G{h}$-module structure on $\G{m}$ being the one given by \eqref{left-action}. If, furthermore, \eqref{right-action},\eqref{theta-map}, and \eqref{phi-map} are all trivial, then the bicocycle double cross sum $\G{m}{\,}_{\g\hspace{-0.1cm}}\bowtie_\t \G{h}$ becomes $\G{m}{\,}_{\g\hspace{-0.1cm}}\rtimes \G{h}$; the abelian extension of $\G{h}$ by $\G{m}$.

Finally, the bicocycle double cross sum $\G{m}{\,}_{\g\hspace{-0.1cm}}\bowtie_\t \G{h} $ encompasses two types of unified products under one roof; in case \eqref{theta-map} is trivial, then $\G{m}{\,}_{\g\hspace{-0.1cm}}\bowtie_\t \G{h} = \G{m}{\,}_{\g\hspace{-0.1cm}}\bowtie \G{h}$ is precisely the (left-handed) unified product of \cite[Thm. 3.2]{AgorMili14}, while if \eqref{gamma-map} is trivial, then $\G{m}{\,}_{\g\hspace{-0.1cm}}\bowtie_\t \G{h} = \G{m}\bowtie_\t \G{h}$ is its right-handed counterpart.
\end{remark}

We shall conclude with an illustration.

\textit{The Lie algebra of formal vector fields on the line.}~

Let $W_1$ be the Lie algebra of formal vector fields on the line, \cite{Fuks-book,Gonc73}, which is an infinite dimensional Lie algebra with a basis $z_i:=x^{i+1}\partial/\partial x$, for $i\geq -1$, and the Jacobi-Lie bracket of vector fields that corresponds to 
\begin{equation} \label{grad-fvf}
[z_i,z_j]=(j-i)z_{i+j}, \qquad i,j \geq -1.
\end{equation}
Let also
\[
\G{m} := \langle z_\ell \mid \ell=4k,\text{ or } \ell=4k+1,\, k\geq 0\rangle, \qquad \G{h} := \langle z_\ell \mid \ell=4k-1,\text{ or } \ell=4k+2,\, k\geq 0\rangle.
\] 
It is then evident that neither $\G{m}$, nor $\G{h}$ is a subalgebra of $W_1$. Nevertheless, they are both subspaces, and $\G{m}\oplus \G{h} \cong W_1$. As such,
\[
W_1 \cong \G{m} \,{}_\g\bowtie_\t \G{h}, 
\]
where the structure maps are given by
\begin{align*}
&\phi:\G{m}\otimes \G{m} \to \G{m},\qquad  \phi(z_{4k},z_{4t})=4(t-k)z_{4(k+t)}, \quad \phi(z_{4k},z_{4t+1})=(4(t-k)+1)z_{4(k+t)+1}, \\
&\mu:\G{h}\otimes \G{h} \to \G{h},\qquad  \mu(z_{4k-1},z_{4t-1})=\begin{cases}
4(t-k)z_{4(k+t-1)+2} & \text{ if } t+k\geq 1, \\
0 & \text{ if } t=k=0,
\end{cases} \\
&\theta:\G{m}\otimes \G{m} \to \G{h}, \qquad \theta(z_{4k+1},z_{4t+1})=4(t-k)z_{4(k+t)+2}, \\
&\g:\G{h}\otimes \G{h} \to \G{m}, \qquad \g(z_{4k-1},z_{4t+2})=(4(t-k)+3)z_{4(k+t)+1},\quad  \g(z_{4k+2},z_{4t+2})=4(t-k)z_{4(k+t+1)},\\
&\vp:\G{h}\otimes \G{m} \to \G{m}, \qquad  z_{4k-1}\rt z_{4t+1}=(4(t-k)+2)z_{4(t+k)},\\
&\psi:\G{h}\otimes \G{m} \to \G{h}, \qquad \psi(z_{4k-1},z_{4t})=(4(t-k)+1)z_{4(t+k)-1}, \quad \psi(z_{4k+2},z_{4t})=(4(t-k)-2)z_{4(t+k)+2}, \\
&\hspace{3.5cm} \psi(z_{4k+2},z_{4t+1})=(4(t-k)-1)z_{4(t+k+1)-1}.
\end{align*}

\subsection{Bicocycle Double Cross Product Lie Groups}\label{subsect-Cocycle-double-cross-product-groups}~

In the present subsection we shall develop a theory for the Lie groups of bicocycle double cross sum Lie algebras. In other words, we shall now introduce a construction that allows to construct a Lie group over two manifolds that are not necessarily subgroups. From the decomposition point of view, we shall present a theory that allows to decompose a Lie group into two submanifolds, none of which being necessarily a subgroup. The construction we present here generalizes a double cross product by two (twisted) 2-cocycles, and a unified product by one (twisted) 2-cocycle.

\begin{proposition}\label{prop-bicocycle-double-cross-prod-gr}
Let $(M,H)$ be a pair of two (pointed) manifolds; namely $(M,e)$ and $(H,1)$, equipped with the (smooth) maps 
\begin{align}
& \vp:H\times M \to M, \qquad (h, x)\mapsto \vp(h,x), \label{left-action-gr-II} \\
& \psi:H\times M \to H, \qquad (h, x)\mapsto \psi(h, x),\label{right-action-gr-II} \\
& \phi:M\times M \to M, \qquad (x, x')\mapsto \phi(x,x')=:x \cdot x', \label{phi-map-gr-II} \\
& \t:M\times M \to H, \qquad (x, x')\mapsto \t(x, x') , \label{theta-map-gr-II}, \\
& \mu:H\times H \to H, \qquad (h,h')\mapsto \mu(h,h')=:h\ast h' \label{mu-map-gr-II} \\
& \g:H\times H \to M, \qquad (h,h')\mapsto \g(h,h') \label{gamma-map-gr-II} 
\end{align}
that satisfy
\begin{align}\label{normalization-gr-II}
\begin{split}
& \vp(1,x) = x, \qquad \vp(h,e) = e, \\
& \psi(h,e) = h, \qquad \psi(1,x) = 1, \\
& e \cdot e = e, \qquad 1\ast 1 = 1, \\
& \t(e,e) = 1, \qquad \g(1,1) = e.
\end{split}
\end{align}
Then, $M\times H$ is a Lie group with the multiplication 
\begin{equation}\label{bicocycle-double-cross-prod-multp-gr}
(x,h)(x',h') = \Big(x\cdot [\vp(h, x' ) \cdot\g(\psi(h,x'),h')],\,[\t(x,\vp(h, x'))\ast \psi(h ,x')] \ast h' \Big),
\end{equation}
and the unit $(e,1) \in M\times H$ if and only if 
\begin{align}
& e\cdot x = x = x\cdot e, \qquad 1\ast h = h = h \ast 1, \label{phi-mu-map-gr-II} \\
& \t(x,e) = 1 = \t(e,x), \qquad \g(h,1) = e = \g(1,h), \label{theta-gamma-map-gr-II} \\
& \vp(h, x'\cdot x'')\cdot \g(\psi(h,x'\cdot x''),\t(x',x'')\ast h'') = \label{left-action-gr-on-multp-III}\\
& \hspace{3cm} \vp(h,x') \cdot [\vp(\psi(h,x'),x'')\cdot \g(\psi(\psi(h,x'),x''), h'')], \notag \\
& \psi(h,x'\cdot x'') \ast [\t(x', x'')\ast h''] = [\t(\vp(h, x'), \vp(\psi(h, x'), x''))\ast\psi(\psi(h,x'), x'')]\ast h'', \label{right-action-theta-comp-gr-III} \\
& \t(x\cdot\g(h,h'), \vp(h\ast h', x''))\ast\psi(h\ast h', x'') = \label{right-action-gr-on-multp-III} \\
& \hspace{3cm} [\t(x, \vp(h,\vp(h',x'')))\ast \psi(h,\vp(h',x''))]\ast \psi(h', x''),  \notag \\
& [x\cdot\g(h,h')]\cdot\vp(h\ast h', x'') = x\cdot [\vp(h,\vp(h', x''))\cdot \g(\psi(h,\vp(h',x'')), \psi(h',x''))], \label{M-is-left-H-module-III-gr} \\
& x\cdot (x'\cdot x'') = (x\cdot x')\cdot [\vp(\t(x, x'), x'') \cdot \g(\psi(\t(x, x'), x''), h'')], \label{phi-map-gr-assoc-III} \\
& \t(x, x'\cdot x'')\ast [\t(x', x'')\ast h''] =  [\t(x\cdot x',\vp(\t(x, x'), x''))\ast \psi(\t(x, x'), x'')] \ast h'',  \label{theta-map-gr-cocycle-III} \\
& [x\cdot\g(h,h')] \cdot \g(h\ast h',h'') = x\cdot [\vp(h,\g(h',h''))\cdot \g(\psi(h,\g(h',h'')), h'\ast h'')], \label{gamma-map-gr-cocycle-III}\\
& (h\ast h') \ast h'' = [\t(x, \vp(h,\g(h',h'')))\ast \psi(h,\g(h',h''))]\ast (h'\ast h''), \label{gamma-map-gr-assoc-III} 
\end{align}
for any $x,x',x'' \in M$, and any $h,h',h''\in H$, and furthermore
\begin{align}
& \text{for any } (x,h)\in\G{M}\times \C{H} \text{ there is } (x^r, h^r)\in\G{M} \times \C{H} \text{ such that } x\cdot x^r = e, \text{ and } h\ast h^r = 1, \label{right-inverses-gr} \\
& \text{for any } (x,h)\in\G{M}\times \C{H} \text{ there is } (x^\ell, h^\ell)\in\G{M} \times \C{H} \text{ such that } x^\ell\cdot x = e, \text{ and } h^\ell\ast h= 1. \label{left-inverses-gr} 
\end{align}
\end{proposition}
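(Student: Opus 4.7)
The plan is to verify the three Lie group axioms — correctness of the unit $(e, 1)$, associativity, and existence of (smooth) inverses — for the multiplication \eqref{bicocycle-double-cross-prod-multp-gr} on the product manifold $M \times H$, and to identify each axiom with the corresponding block of listed conditions.

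\textbf{Unit.} Substituting $(x', h') = (e, 1)$ into \eqref{bicocycle-double-cross-prod-multp-gr} and applying the normalizations \eqref{normalization-gr-II} (in particular $\vp(h, e) = e$, $\psi(h, e) = h$, and $\t(e, e) = 1$) collapses the right-unit condition $(x, h)(e, 1) = (x, h)$ into the two scalar equations
\[
x \cdot [e \cdot \g(h, 1)] = x, \qquad [\t(x, e) \ast h] \ast 1 = h.
\]
These force $\g(h, 1) = e$, $x \cdot e = x$, $\t(x, e) = 1$, $1 \ast h = h$, and $h \ast 1 = h$, i.e.\ the right-unit parts of \eqref{phi-mu-map-gr-II} and \eqref{theta-gamma-map-gr-II}. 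The left-unit condition $(e, 1)(x, h) = (x, h)$ supplies the remaining halves symmetrically.

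\textbf{Associativity.} This is the main computational obstacle. Denote
\[
A := \bigl((x, h)(x', h')\bigr)(x'', h''), \qquad B := (x, h)\bigl((x', h')(x'', h'')\bigr),
\]
each obtained by applying \eqref{bicocycle-double-cross-prod-multp-gr} twice. The equality $A = B$ is an identity of pairs whose $M$-component is a nested expression in $\phi, \vp, \g, \psi, \t$ and whose $H$-component is a nested expression in $\mu, \t, \psi, \vp, \g$, in six variables. I would extract the nine associativity axioms \eqref{left-action-gr-on-multp-III}--\eqref{gamma-map-gr-assoc-III} by strategic specializations that switch off certain twists:
\begin{itemize}
\item setting $h = h' = 1$ reduces the first two factors to a pure $\phi, \t$-product, and the $M$- and $H$-components yield \eqref{phi-map-gr-assoc-III} (a quasi-Jacobi identity for $\phi$) and \eqref{theta-map-gr-cocycle-III} (the $2$-cocycle identity for $\t$);
\item setting $x = x' = e$ reduces the first two factors to a pure $\mu, \g$-product, and the two components yield \eqref{gamma-map-gr-cocycle-III} and \eqref{gamma-map-gr-assoc-III};
\item setting exactly one of $h, h'$ to $1$ (and leaving all $x$'s generic) isolates the mixed $\vp, \psi$-on-$\phi, \t$ identities \eqref{left-action-gr-on-multp-III} and \eqref{right-action-theta-comp-gr-III};
\item setting exactly one of $x, x'$ to $e$ (and leaving all $h$'s generic) isolates the mixed $\vp, \psi$-on-$\mu, \g$ identities \eqref{M-is-left-H-module-III-gr} and \eqref{right-action-gr-on-multp-III}.
\end{itemize}
Conversely, assuming all nine identities, a complete term-by-term expansion of $A$ and $B$ with repeated rewriting shows $A = B$ in full generality, at which point the monoid structure is in place.

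\textbf{Inverses and smoothness.} With the unit and associativity established, $M \times H$ is a smooth monoid. Given $(x, h)$, the equation $(x, h)(x^r, h^r) = (e, 1)$ unwinds, via the now-proved axioms, into a system whose solvability is equivalent to producing $x^r \in M$ with $x \cdot x^r = e$ and $h^r \in H$ with $h \ast h^r = 1$ — precisely \eqref{right-inverses-gr}; the remaining twists are then determined by backsubstitution. Left inverses are produced symmetrically from \eqref{left-inverses-gr}. Smoothness of the multiplication and inversion then follows from smoothness of the six structure maps, so $M \times H$ is a Lie group. The genuine difficulty throughout is bookkeeping: the six-variable identity produced by expanding $A$ and $B$ has many nested occurrences of $\vp, \psi, \t, \g$, and care is needed to check that the specializations above really do isolate the nine conditions without interference; once this is done, both necessity and sufficiency reduce to routine matching of terms.
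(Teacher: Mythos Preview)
Your strategy is exactly the paper's: derive the unit conditions from $(x,h)(e,1)=(x,h)=(e,1)(x,h)$, and derive the associativity conditions by evaluating $A=B$ at strategic specializations of the six variables. The paper does precisely this, checking four carefully chosen mixed triples.

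However, three of your four specializations are wrong, and this is not merely cosmetic. Look at the variables that actually occur in each target identity. Conditions \eqref{gamma-map-gr-cocycle-III} and \eqref{gamma-map-gr-assoc-III} both carry a \emph{free} variable $x$ (e.g.\ the left side of \eqref{gamma-map-gr-cocycle-III} begins with $[x\cdot\g(h,h')]\cdot\ldots$). Your choice $x=x'=e$ kills that $x$ and therefore cannot recover these identities as stated; you would only obtain their $x=e$ specializations, and without cancellation in $(M,\cdot)$ you cannot promote these back to the general form. The correct specialization, and the one the paper uses, is $x'=x''=e$: one computes $\bigl[(x,h)(e,h')\bigr](e,h'')$ versus $(x,h)\bigl[(e,h')(e,h'')\bigr]$, and the $M$- and $H$-components give exactly \eqref{gamma-map-gr-cocycle-III} and \eqref{gamma-map-gr-assoc-III}.

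The same variable-matching issue affects your third and fourth bullets. Conditions \eqref{left-action-gr-on-multp-III} and \eqref{right-action-theta-comp-gr-III} involve $h,x',x'',h''$ but \emph{not} $x$; they come from the triple $(e,h),(x',1),(x'',h'')$, i.e.\ one must set \emph{both} $x=e$ and $h'=1$, not ``one of $h,h'$ to $1$ with all $x$'s generic''. Dually, \eqref{right-action-gr-on-multp-III} and \eqref{M-is-left-H-module-III-gr} involve $x,h,h',x''$ and come from $(x,h),(e,h'),(x'',1)$, so one needs $x'=e$ and $h''=1$. With your stated specializations you would obtain different (and in some cases more complicated) identities, and you give no argument that those are equivalent to the listed ones. (Incidentally, there are eight associativity identities here, not nine.) Once the specializations are corrected, the rest of your outline --- the inverse construction via \eqref{right-inverses-gr}, \eqref{left-inverses-gr} and smoothness from smoothness of the structure maps --- matches the paper.
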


\begin{proof}
Let us begin with the identity element. On one hand we have
\[
(e, 1)(x, 1) = \Big(e \cdot [x \cdot e],\, [\t(e,x) \ast 1]\ast 1\Big),
\]
while on the other hand
\[
(x, 1)(e, 1) = \Big(x \cdot e^2,\, [\t(x,e) \ast 1]\ast 1\Big).
\]
Accordingly, 
\[
(e,1)(x, 1) = (x, 1) = (x, 1)(e, 1)
\]
for any $x\in M$ if and only if 
\[
e\cdot (x \cdot e) = x = x \cdot e^2
\]
and
\[
[\t(e,x) \ast 1]\ast 1 = 1 = [\t(x,e) \ast 1]\ast 1.
\]
On the other hand, 
\[
(e, h)(e, 1) = \Big( e \cdot (e \cdot \g(h,1)),\, (1 \ast h) \ast 1 \Big)
\]
while
\[
(e, 1)(e, h) = \Big(e \cdot (e \cdot \g(1,h)),\, 1^2 \ast h\Big).
\]
Thus, 
\[
(e, h)(e, 1) = (e, h) = (e, 1)(e, h)
\]
for any $h\in \C{H}$, if and only if
\[
1^2\ast h = h = (1 \ast h) \ast 1,
\]
and
\[
 e \cdot (e \cdot \g(h,1)) = e =  e \cdot (e \cdot \g(1,h)).
\]
As a result, in view of the last line of \eqref{normalization-gr-II}, $(e,1) \in M\times H$ is the identity element of the multiplication \eqref{bicocycle-double-cross-prod-multp-gr} if and only if \eqref{phi-mu-map-gr-II} and \eqref{theta-gamma-map-gr-II} hold.

Let us next consider the (mixed) associativity conditions for \eqref{bicocycle-double-cross-prod-multp-gr}.

As for $(x, 1), (x', 1), (x'',h'')\in M\times H$, we have on one hand
\begin{align*}
& (x, 1)[(x', 1)(x'', h'')] = (x,1)\Big(x'\cdot x'' ,\, \t(x', x'')\ast h''\Big) = \\
& \Big(x\cdot [x'\cdot x''], \, \t(x,x'\cdot x'')\ast [\t(x', x'')\ast h'']\Big),
\end{align*}
while on the other hand,
\begin{align*}
&[(x, 1)(x', 1)](x'', h'') = \Big(x\cdot x', \t(x, x')\Big)(x'', h'') = \\
&  \Big([x\cdot x'] \cdot [\vp(\t(x, x'), x'') \cdot \g(\psi(\t(x, x'), x''), h'')], \, [\t(x\cdot x',\vp(\t(x, x'), x''))\ast \psi(\t(x, x'), x'')] \ast h''\Big).
\end{align*}
Accordingly,
\[
 (x, 1)[(x', 1)(x'', h'')]   =  [(x, 1)(x', 1)](x'', h'') 
\]
if and only if \eqref{phi-map-gr-assoc-III} and \eqref{theta-map-gr-cocycle-III} are satisfied. 

We, then, proceed onto the mixed associativity condition for $(x, h), (e, h'), (e,h'')\in M\times H$. To this end, we have
\begin{align*}
&[(x, h)(e, h')](e, h'') = \Big(x\cdot\g(h,h'), h\ast h'\Big)(e, h'') = \\
&  \Big([x\cdot\g(h,h')]\cdot \g(h\ast h', h''),\, [h\ast h']\ast h''\Big),
\end{align*}
and
\begin{align*}
& (x, h)[(e, h')(e, h'')] = (x, h)\Big(\g(h',h''), h'\ast h''\Big) = \\
& \Big(x \cdot [\vp(h,\g(h',h''))\cdot \g(\psi(h,\g(h',h'')), h'\ast h'')], \,  [\t(x, \vp(h,\g(h',h'')))\ast \psi(h,\g(h',h''))]\ast [h'\ast h'']\Big).
\end{align*}
Hence,
\[
[(x, h)(e, h')](e, h'')  = (x, h)[(e, h')(e, h'')] 
\]
if and only if  \eqref{gamma-map-gr-cocycle-III} and \eqref{gamma-map-gr-assoc-III} hold. 

Next, we observe for $(x, h), (e, h'), (x'',1)\in M\times H$ that
\begin{align*}
& [(x, h)(e, h')](x'',1) = \Big(x\cdot \g(h,h'), h\ast h'\Big)(x'',1) = \\
& \Big([x\cdot \g(h,h')] \cdot \vp(h\ast h', x''), \, \t(x\cdot \g(h,h'), \vp(h\ast h', x'')) \ast \psi(h\ast h', x'')\Big),
\end{align*}
and that
\begin{align*}
& (x, h)[(e,h')(x'', 1)] =(x,h)\Big(\vp(h',x''), \psi(h', x'')\Big) = \\
& \Big(x\cdot [\vp(h,\vp(h',x''))\cdot \g(\psi(h,\vp(h',x'')),\psi(h', x''))] , \, [\t(x, \vp(h,\vp(h',x'')))\ast \psi(h,\vp(h',x''))]\ast \psi(h', x'')\Big).
\end{align*}
Therefore, we conclude that 
\[
[(x, h)(e, h')](x'',1) = (x, h)[(e,h')(x'', 1)] 
\]
if and only if \eqref{right-action-gr-on-multp-III} and \eqref{M-is-left-H-module-III-gr} are satisfied.

Lastly, for $(e, h), (x',1), (x'',h'')\in M\times H$ we consider the associativity through
\begin{align*}
& (e , h)[(x' ,1)(x'' , h'')] = (e, h)\Big(x'\cdot x'', \t(x',x'')\ast h''\Big) = \\
&\Big(\vp(h,x'\cdot x'')\cdot \g(\psi(h,x'\cdot x''),\t(x',x'')\ast h''), \, \psi(h,x'\cdot x'') \ast [\t(x',x'')\ast h'']\Big)
\end{align*}
and
\begin{align*}
& [(e , h)(x' ,1)](x'' , h'') = \Big(\vp(h,x'), \psi(h,x')\Big)(x'' , h'') = \\
& \Big(\vp(h,x') \cdot [\vp(\psi(h,x'),x'')\cdot \g(\psi(\psi(h,x'),x''), h'')], [\t(\vp(h,x'), \vp(\psi(h,x'),x''))\ast \psi(\psi(h,x'),x'')]\ast h''\Big),
\end{align*}
and we conclude that 
\[
(e , h)[(x' ,1)(x'' , h'')]=[(e , h)(x' ,1)](x'' , h'')  
\]
if and only if \eqref{left-action-gr-on-multp-III} and \eqref{right-action-theta-comp-gr-III} hold.

We have come to the inverses now. Given any $(x,1) \in \G{M}\times \C{H}$, there is $(x',h') \in \C{M}\times \C{H}$ such that 
\begin{equation}\label{right-inverse-MH}
(x,1)(x',h') = (e,1)
\end{equation}
if and only if \eqref{right-inverses-gr} holds. We note, in this case, that 
\[
x' = x^r, \qquad h' = \t(x,x^r)^r.
\]
Similarly, given $(e,h) \in \G{M}\times \C{H}$, there is $(x'',h'') \in \C{M}\times \C{H}$ such that 
\begin{equation}\label{left-inverse-MH}
(x'',h'')(e,h) = (e,1)
\end{equation}
if and only if \eqref{left-inverses-gr} holds. In this case, 
\[
x'' = \g(h^\ell,h)^\ell, \qquad h'' = h^\ell.
\]
\end{proof}

We shall denote the Lie group of Proposition \ref{prop-bicocycle-double-cross-prod-gr} by $M {\,}_{\g\hspace{-0.1cm}}\bowtie_\t H :=M\times H$, and we shall call it the \emph{bicocycle double cross product} of the manifolds $M$ and $H$.

A series of remarks are in order.

\begin{remark}
Given $(x,1) \in \G{M}\times \C{H}$,
\begin{align*}
& (x,1)[(x^r,h^r)(x,1)] = (x,1)\Big\{(x^r,1)[(e,h^r)(x,1)]\Big\} = [(x,1)(x^r,1)][(e,h^r)(x,1)] = \\
& \Big\{[(x,1)(x^r,1)](e,h^r)\Big\}(x,1) = [(x,1)(x^r,h^r)](x,1) = (x,1),
\end{align*}
where the first and the third equalities follow from \eqref{right-action-gr-on-multp-III} and \eqref{M-is-left-H-module-III-gr}, while the second and the forth equalities are results of \eqref{phi-map-gr-assoc-III} and \eqref{theta-map-gr-cocycle-III}. Therefore, we have
\[
(x,1)^{-1} = (x^r,\t(x,x^r)^r).
\]
Similarly,
\begin{align*}
& [(e,h)(x^\ell,h^\ell)](e,h) =  \Big\{[(e,h)(x^\ell,1)](e, h^\ell)\Big\}(e,h) = [(e,h)(x^\ell,1)][(e, h^\ell)(e,h)] = \\
& (e,h)\Big\{(x^\ell,1)[(e, h^\ell)(e,h)]\Big\} = (e,h)[(x^\ell,h^\ell)(e,h)] = (e,h),
\end{align*}
where the first and the third equalities follow from \eqref{left-action-gr-on-multp-III} and \eqref{right-action-theta-comp-gr-III}, while the second and the forth equalities are results of \eqref{gamma-map-gr-cocycle-III} and \eqref{gamma-map-gr-assoc-III}. Accordingly,
\[
(e,h)^{-1} = (\g(h^\ell,h)^\ell, h^\ell).
\]
As such, the inversion is given as
\begin{align*}
& (x,h)^{-1} = [(x,1)(e,h)]^{-1} = (e,h)^{-1}(x,1)^{-1} = (x^r,\t(x,x^r)^r)(\g(h^\ell,h)^\ell, h^\ell) = \\
& \Big(x^r\cdot [\vp(\t(x,x^r)^r, \g(h^\ell,h)^\ell) \cdot \g(\psi(\t(x,x^r)^r, \g(h^\ell,h)^\ell), h^\ell)], \\
& \hspace{3cm} [\t(x^r, \vp(\t(x,x^r)^r, \g(h^\ell,h)^\ell))\ast \psi(\t(x,x^r)^r, \g(h^\ell,h)^\ell)]\ast h^\ell\Big).
\end{align*}
\end{remark}

\begin{remark}
If the map \eqref{theta-map-gr-II} is trivial, and \eqref{right-action-gr-II} is a trivial right action, then the group $M {\,}_{\g\hspace{-0.1cm}}\bowtie_\theta H = M {\,}_{\g\hspace{-0.1cm}}\bowtie H$ in Proposition \ref{prop-bicocycle-double-cross-prod-gr} coincides with the one in \cite[Ex. 6.3.3]{Majid-book}. If, on the other hand, the map \eqref{gamma-map-gr-II} is trivial, and \eqref{left-action-gr-II} is a trivial left action, then the algebra $M {\,}_{\g\hspace{-0.1cm}}\bowtie_\theta H = M \bowtie_\theta H$ becomes the right-handed version of the group in \cite[Ex. 6.3.3]{Majid-book}. If, furthermore, $H$ is commutative, then \eqref{right-action-gr-II} is a right action. In this case, \eqref{theta-map-gr-cocycle-III} indicates that \eqref{theta-map-gr-II} is a 2-cocycle in the group cohomology of $M$ with coefficients in $H$, that is,
\[
\t\in H^2(M,H).
\]
Finally, if \eqref{right-action-gr-II} and \eqref{theta-map-gr-II} are trivial, and $M$ is commutative, then \eqref{left-action-gr-II} turns out to be a left action. In this case, \eqref{gamma-map-gr-cocycle-III} implies that \eqref{gamma-map-gr-II} is a 2-cocycle in the group cohomology of $H$ with coefficients in $M$, that is,
\[
\g\in H^2(H,M).
\]
We note also that if \eqref{theta-map-gr-II} is trivial, then $M {\,}_{\g\hspace{-0.1cm}}\bowtie_\t H = M {\,}_{\g\hspace{-0.1cm}}\bowtie H$ is precisely the unified product in \cite[Thm. 3.5]{AgorMili14-II}, whereas if \eqref{gamma-map-gr-II} is trivial, then $M {\,}_{\g\hspace{-0.1cm}}\bowtie_\t H= M \bowtie_\t  H$ is its right-handed analogue.
\end{remark}

As in the preceding subsection, from the decomposition point of view we have the following generalization of \cite[Thm. 7.2.3]{Majid-book} and \cite[Thm. 3.1]{AgorMili14-II}, compare with \cite[Prop. 2.3]{BespDrab01}. 

\begin{proposition}\label{prop-universal-II-gr}
Given two manifolds $M$ and $H$, a Lie group $G$, and the (smooth) maps
\[
\xymatrix{
M \ar@{^{(}->}[r]_i  & G    &  \ar@{_{(}->}[l]^j H,
}
\]
if $m\circ (i\times j):M\times H \to G$ is a diffeomorphism, where $m:G\times G\to G$ denotes the multiplication in $G$, then $G\cong M {\,}_{\g\hspace{-0.1cm}}\bowtie_\t H$ as Lie groups, where the maps \eqref{left-action-gr-II}, \eqref{right-action-gr-II}, \eqref{phi-map-gr-II}, \eqref{theta-map-gr-II}, \eqref{mu-map-gr-II}, \eqref{gamma-map-gr-II} may be obtained from
\[
hx = \vp(h, x) \psi(h,x), \qquad xx'=(x\cdot x') \t(x,x'), \qquad hh' = \g(h, h') (h\ast h'),
\]
for any $x,x' \in M$, and any $h,h'\in H$.
\end{proposition}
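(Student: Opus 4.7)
The plan is to use the diffeomorphism $F := m\circ(i\times j): M\times H \to G$ to transport the Lie group structure of $G$ to $M\times H$, and then to recognize that the resulting multiplication is precisely the bicocycle double cross product formula \eqref{bicocycle-double-cross-prod-multp-gr} once the six structure maps \eqref{left-action-gr-II}--\eqref{gamma-map-gr-II} are defined by the three decomposition rules in the statement. Identifying $M$ and $H$ with their images under $i$ and $j$, every element of $G$ is written uniquely as $xh$ with $x\in M$, $h\in H$; hence for any $h\in H$, $x\in M$ the element $hx\in G$ has a unique expression $\vp(h,x)\,\psi(h,x)$, and analogously for $xx'$ and $hh'$. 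Smoothness of these maps follows from smoothness of $F^{-1}$ together with the two projections $M\times H\to M$ and $M\times H\to H$.

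Next, I would verify the normalization conditions \eqref{normalization-gr-II}, using that $i$ and $j$ send the basepoints $e\in M$ and $1\in H$ to the identity $1_G\in G$. For instance, $1\cdot x = x$ in $G$ gives $\vp(1,x) = x$ and $\psi(1,x) = 1$; the equations $x\cdot e = x$, $h\cdot 1 = h$, $ee = e$, $1\cdot 1 = 1$ in $G$ similarly yield the rest. With the structure maps in place, I then check that the product in $G$, transported to $M\times H$, agrees with \eqref{bicocycle-double-cross-prod-multp-gr}: expanding $(xh)(x'h') = x\,(hx')\,h' = x\,\vp(h,x')\,\psi(h,x')\,h'$, decomposing $\vp(h,x')\psi(h,x') = \vp(h,x')\,\g(\psi(h,x'),h')\,(\psi(h,x')\ast h')$ using the $hh'$-rule applied to the middle factor, and then applying the $xx'$-rule to $x\cdot[\vp(h,x')\g(\psi(h,x'),h')]\cdot (\psi(h,x')\ast h')$ produces exactly the right-hand side of \eqref{bicocycle-double-cross-prod-multp-gr}.

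The heart of the argument is then to show that the six maps so defined satisfy the hypotheses \eqref{phi-mu-map-gr-II}--\eqref{left-inverses-gr} of Proposition \ref{prop-bicocycle-double-cross-prod-gr}, so that the resulting structure on $M\times H$ is indeed a Lie group and $F$ is a group isomorphism. The unit conditions \eqref{phi-mu-map-gr-II} and \eqref{theta-gamma-map-gr-II} follow again from plugging in $e$ or $1$ and using the normalization. Each of the seven mixed associativity/cocycle conditions \eqref{left-action-gr-on-multp-III}--\eqref{gamma-map-gr-assoc-III} is obtained by writing the corresponding triple product in $G$ in two different ways by associativity, and uniqueness of the $M$-$H$ decomposition then forces equality of the $M$- and $H$-components separately. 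For example, \eqref{phi-map-gr-assoc-III} and \eqref{theta-map-gr-cocycle-III} drop out from associating $xx'x''h''$ as $(xx')(x''h'')$ versus $x(x'x''h'')$; \eqref{gamma-map-gr-cocycle-III} and \eqref{gamma-map-gr-assoc-III} come from $xhh'h''$; \eqref{right-action-gr-on-multp-III} and \eqref{M-is-left-H-module-III-gr} from $xhh'x''$; and \eqref{left-action-gr-on-multp-III} and \eqref{right-action-theta-comp-gr-III} from $hx'x''h''$. Finally, the inverse conditions \eqref{right-inverses-gr}--\eqref{left-inverses-gr} are a direct consequence of the existence of inverses in $G$: given $(x,h)$, decompose $(xh)^{-1} \in G$ along $M\times H$ to extract the required right and left inverses.

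The main obstacle is bookkeeping: each of the seven triple-product associativity identities in $G$ must be carried out carefully and its two components matched against the correct pair of conditions in Proposition \ref{prop-bicocycle-double-cross-prod-gr}; no single step is deep, but the sheer number of parallel computations is where errors are most likely. Once those identifications are done, the fact that $F$ is a diffeomorphism intertwining the two multiplications upgrades it to a Lie group isomorphism $G\cong M {\,}_{\g\hspace{-0.1cm}}\bowtie_\t H$.
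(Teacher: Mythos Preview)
Your proposal is correct and follows essentially the same approach as the paper: define the six structure maps via the unique $M\times H$-decomposition afforded by the diffeomorphism $m\circ(i\times j)$, read off the normalizations from the basepoints, and obtain the conditions \eqref{phi-mu-map-gr-II}--\eqref{gamma-map-gr-assoc-III} from associativity (and unitality) in $G$. Your outline is in fact more explicit than the paper's, which simply records the maps \eqref{maps-I-II-gr} and asserts that the required identities hold; you additionally indicate which triple product yields which pair of conditions and address the inverse conditions \eqref{right-inverses-gr}--\eqref{left-inverses-gr}, which the paper leaves implicit.
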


\begin{proof}
Let us first consider the maps
\begin{equation}\label{f-map-II-gr}
f:H\times M \to M\times H
\end{equation}
given by
\[
j(h)i(x) = \Big(m\circ (i\times j) \circ f \Big)(h, x),
\]
and  
\begin{equation}\label{g-map-II-gr}
g:M\times M\to M\times H
\end{equation}
given by
\[
i(x)i(x') = \Big(m\circ (i\times j) \circ g \Big)(x, x'),
\]
together with
\begin{equation}\label{r-map-II-gr}
r:H\times H \to M\times H
\end{equation}
given by
\[
j(h)j(h') = \Big(m\circ (i\times j) \circ r \Big)(h\times h').
\]
Accordingly, we define 
\begin{align}\label{maps-I-II-gr}
\begin{split}
& \vp:H\times M\to M, \qquad \vp:= \pi_1\circ f, \\
& \psi:H\times M\to H, \qquad \psi:= \pi_2\circ f, \\
& \phi:M\times M\to M\times M, \qquad \phi:= \pi_1\circ g, \\
& \t:M\times M\to H, \qquad \t:= \pi_2\circ g, \\
& \mu:H\times H \to H, \qquad \mu:= \pi_2\circ r, \\
& \g:H\times H \to M, \qquad \g:= \pi_1\circ r.
\end{split}
\end{align}
More explicitly,
\begin{align*}
& f(h\ot x) = \vp(h,x) \ot \psi(h,x), \\
& g(x\ot x') = \phi(x,x') \ot \t(x,x'), \\
& r(h\ot h') = \g(h,h') \ot \mu(h,h').
\end{align*}
Therefore, the multiplication on $M\times H$ which is borrowed from $G$ is given by
\begin{align*}
& hx := \vp(h, x) \psi(h,x), \\
& xx':= \phi(x, x') \t(x,x'), \\
& hh' := \g(h, h')\mu(h,h'),
\end{align*}
for any $x,x'\in M$, and any $h,h'\in H$, with the unit being $(e,1)\in M\times H$. 

To conclude, we impose the associativity and the unitality of the group structure on $M\times H$ to derive \eqref{normalization-gr-II}, while \eqref{phi-mu-map-gr-II}-\eqref{gamma-map-gr-assoc-III} are all satisfied by the maps of \eqref{maps-I-II-gr}. 
\end{proof}

Aside from the Lie groups of bicocycle double cross sum Lie algebras, local triviality of fibrations offer a suitable avenue for the bicocycle double cross product (local) Lie groups. Accordingly, we shall conclude this subsection with a brief discussion on the local Lie groups.

\subsubsection*{Local Lie groups}~

Let us first recall from \cite[Def. 2]{Olve96}, see also \cite[Sect. 3]{Ercu-book}, that a manifold $M$, with a distinguished element $e\in M$, is said to be a \emph{local Lie group} if there are smooth maps 
\[
\mu:U \to M, \qquad i:V \to M,
\]
where $U\subseteq M\times M$ is an open subset such that $(\{e\}\times M)\cup(M\times \{e\}) \subseteq U$, and $V\subseteq M$ is also an open subset with $e\in V$, so that $V\times i(V) \subseteq U$, and that $i(V)\times V \subseteq U$, satisfying
\begin{itemize}
\item[(i)] $\mu(e,x) = x = \mu(x,e)$, for any $x\in M$,
\item[(ii)] $\mu(i(y),y) = e = \mu(y,i(y))$, for any $y\in V$, and
\item[(iii)] $\mu(x,\mu(y,z)) = \mu(\mu(x,y),z)$ for any $(x,y), (y,z), (x,\mu(y,z)), (\mu(x,y),z) \in U$.
\end{itemize}

As was pointed out in \cite[Ex. 3]{Olve96}, an immediate example od a local Lie group is that of a neighbourhood of the identity in a (global) Lie group. To be more precise, let $G$ be a Lie group with the identity element $e\in G$, and $M\subseteq G$ to be any neighbourhood of the identity. Then, $M$ defines a local Lie group by restricting the group multiplication $m:G\times G \to G$ to any open subset $U \subseteq M\times M$ satisfying $(\{e\}\times M)\cup(M\times \{e\}) \subseteq U \subseteq (M\times M)\cap \mu^{-1}(M)$, and the inversion $i:G\to G$ to any open subset $V\subseteq M$ subject to $e \in V \subseteq M\cap i^{-1}(M)$ and $(V\times i(V)) \cup (i(V)\times V) \subseteq U$.

Accordingly, given a fibration
\[
\xymatrix{
F \ar[r]& G  \ar[r]^\pi& B
}
\]
so that the total space $G$ has the structure of a Lie group (such as the Hopf fibrations), the local triviality allows to decompose a neighbourhood $M\subseteq G$ of the identity $e\in G$, say into $M\cong M_1\times M_2$, along with $e \to (e_1,e_2)$. As such, any $U \subseteq M\times M$ decomposes as $U\cong U_1 \times U_2$, where $U_1 \subseteq M_1\times M_1$ and $U_2 \subseteq M_2\times M_2$. The conditions \eqref{phi-mu-map-gr-II}-\eqref{gamma-map-gr-assoc-III} of Proposition \ref{prop-bicocycle-double-cross-prod-gr} then allow to characterize (the associativity of) the multiplication 
\[
(x,h)(x',h') = \Big(x\cdot [\vp(h, x' ) \cdot\g(\psi(h,x'),h')],\,[\t(x,\vp(h, x'))\ast \psi(h ,x')] \ast h' \Big),
\]
via
\begin{align}
& \vp:p_1(U_2)\times p_2(U_1) \to p_1(m^{-1}(p_2(U_1))) \cap p_2(U_1), \qquad (h, x)\mapsto \vp(h,x), \label{left-action-gr-II-2} \\
& \psi:p_1(U_2)\times p_2(U_1) \to p_1(U_2) \cap p_2(m^{-1}(p_1(U_2))), \qquad (h, x)\mapsto \psi(h, x),\label{right-action-gr-II-2} \\
& \phi:U_1 \to M_1, \qquad (x, x')\mapsto \phi(x,x')=:x \cdot x', \label{phi-map-gr-II-2} \\
& \t:U_1 \to p_1(m^{-1}(p_1(U_2))), \qquad (x, x')\mapsto \t(x, x') , \label{theta-map-gr-II-2}, \\
& \mu:U_2 \to M_2, \qquad (h,h')\mapsto \mu(h,h')=:h\ast h' \label{mu-map-gr-II-2} \\
& \g:U_2 \to p_2(m^{-1}(p_2(U_1))), \qquad (h,h')\mapsto \g(h,h') \label{gamma-map-gr-II-2} 
\end{align}
satisfying
\begin{align}\label{normalization-gr-II-2}
\begin{split}
& \vp(e_2,x) = x, \qquad \vp(h,e_1) = e_1, \\
& \psi(h,e_1) = h, \qquad \psi(e_2,x) = e_2, \\
& e_1 \cdot e_1 = e_1, \qquad e_2\ast e_2 = e_2, \\
& \t(e_1,e_1) = e_2, \qquad \g(e_2,e_2) = e_1,
\end{split}
\end{align}
for any $(x,x')\in U_1$ and any $(h,h') \in U_2$, where given $i,j\in\{1,2\}$, $p_i(U_j) \in M_i$ refers to the projection of $U_j$ onto the $i$th component. Then, the decomposition $M\cong M_1\times M_2$ and Proposition \ref{prop-universal-II-gr} ensures the multiplication on $G$ coincides with the one above when restricted to $U \subseteq M\times M$. Lastly, in order to ensure \eqref{right-inverses-gr} and \eqref{left-inverses-gr}, we take a $V\cong V_1\times V_2 \in M \cap i^{-1}(M)$ such that $(V_1\times i(V_1)) \cup (i(V_1)\times V_1) \subseteq U_1$, and that $(V_2\times i(V_2)) \cup (i(V_2)\times V_2) \subseteq U_2$. 

Finally, we do note in this case that $M_1$ (resp. $M_2$) becomes a local Lie group via $U_1 \subseteq M_1 \times M_1$ (resp. $U_2 \subseteq M_2 \times M_2$) and $V_1 \subseteq M_1$ (resp. $V_2 \subseteq M_2$). As a result, we may say (with a slight abuse of language) that $M$ is a bicocycle double cross product of the local Lie groups $M_1$ and $M_2$.

\section{Quantum (Bi)cocycle Double Cross Constructions} \label{Sec-Quan}

Motivated by the fact that Hopf algebras are the quantum counterparts of both Lie groups and Lie algebras, we shall upgrade in this section the bicocycle double cross constructions of both Subsection \ref{subsect-bicocycle-double-sum} and Subsection \ref{subsect-Cocycle-double-cross-product-groups} to the level of bialgebras. Furthermore, we shall also discuss the semidualization of such a \emph{bicocycle double cross product bialgebra}, which happens to be a unified product as an algebra, and a \emph{unified coproduct} as a coalgebra. The latter will be constructed below in detail, under the name of \emph{cocycle double cross coproduct bialgebra}, while the former is precisely the construction in \cite{AgorMili11}, and hence will only be reviewed for the readers' convenience.

\subsection{Cocycle Double Cross Product Bialgebras}\label{subsect-Cocycle-double-cross-product-bialgebra}~

In the present subsection we shall recall from \cite{AgorMili11}, see also \cite{AgoreMilitaru-book}, a bialgebra construction which accounts for the quantum analogue of both the unified products $\G{g}:=\G{m}\bowtie_\t\G{h}$ of \cite{AgorMili14}, and $G\bowtie_\t M$ of \cite{AgorMili14-II}. Namely, we shall review the construction of a bialgebra $\C{G}:= \G{M} \ot \C{H}$, in which $\G{M}$ is merely a subcoalgebra, while $\C{H}$ is a sub-bialgebra. The construction may then be pictured as 
\[
\xymatrix{
\G{M} \ar@{^{(}->}[r]_i  & \C{G}    &  \ar@{_{(}->}[l]^j_{\rm alg}\C{H},
}
\]
in the category of coalgebras, and provides a natural cocycle generalization of the double cross product construction in \cite[Thm. 7.2.2]{Majid-book}, and the cocycle bicrossproduct construction in \cite[Prop. 6.3.7]{Majid-book}.  

Let $\C{H}$ be a bialgebra, and let $(\G{M},e)$ be a coalgebra equipped with a distinguished grouplike $e\in \G{M}$. Let the mutual interaction of the pair $(\G{M},\C{H})$ be given by
\begin{equation}\label{left-action-Hopf}
\vp:\C{H}\ot \G{M} \to \G{M}, \qquad h\ot x\mapsto \vp(h,x)=: h\rt x,
\end{equation}
that satisfies
\begin{equation}\label{vp-action-Hopf}
1\rt x = x,
\end{equation}
and
\begin{equation}\label{right-action-Hopf}
\psi:\C{H}\ot \G{M} \to \C{H}, \qquad h\ot x \mapsto \psi(h,x),
\end{equation}
that satisfies 
\begin{equation}\label{psi-unital-Hopf}
\psi(h,e) = h.
\end{equation} 

Let us emphasize that \eqref{left-action-Hopf} and \eqref{right-action-Hopf} are morphisms of coalgebras (as we work in the category of coalgebras); that is, 
\[
\D(h\rt x) = h\ps{1}\rt x\ps{1} \ot h\ps{2}\rt x\ps{2}, \qquad \ve(h\rt x) = \ve(h)\ve(x),
\]
as such, $\G{M}$ is a left $\C{H}$-module algebra, and 
\[
\D(\psi(h,x)) = \psi(h\ps{1},x\ps{1}) \ot \psi(h\ps{2},x\ps{2}), \qquad \ve(\psi(h,x)) = \ve(h)\ve(x).
\]
Furthermore, just like it is in \cite[Def. 7.2.1]{Majid-book}, we shall assume 
\begin{equation}\label{left-right-action-on-id-Hopf}
h\rt e=\ve(h)e, \qquad \psi(1,x) = \ve(x)1.
\end{equation}

Let us further assume that the coalgebra $\G{M}$ is endowed with a binary operation
\begin{equation}\label{phi-map-Hopf}
\phi:\G{M}\ot \G{M} \to \G{M}, \qquad x\ot x' \mapsto \phi(x,x')=:x\cdot x',
\end{equation}
as well as a mapping
\begin{equation}\label{theta-map-Hopf}
\t:\G{M}\ot \G{M} \to \C{H}, \qquad x\ot x' \mapsto \t(x,x'),
\end{equation}
which satisfies
\begin{equation}\label{theta-unital-Hopf}
\t(e,e) = 1
\end{equation}
in order to be able to realize $\C{H} \hookrightarrow{} \G{M}\ot \C{H}$ as a subalgebra. Let us underline also that these too are coalgebra morphisms; that is,
\[
\D(x\cdot x') = x\ps{1}\cdot x'\ps{1} \ot x\ps{2}\cdot x'\ps{2}, \qquad \ve(x\cdot x') = \ve(x)\ve(x'),
\]
and
\[
\D(\t(x, x')) = \t(x\ps{1}, x'\ps{1}) \ot \t(x\ps{2}, x'\ps{2}), \qquad \ve(\t(x, x')) = \ve(x)\ve(x').
\]

\begin{definition}
Let $\G{M}$ be a coalgebra with a distinguished group-like $e \in\G{M}$, and let $\C{H}$ be a bialgebra. Then the pair $(\G{M},\C{H})$, equipped with coalgebra maps \eqref{left-action-Hopf}, \eqref{right-action-Hopf}, \eqref{phi-map-Hopf}, and \eqref{theta-map-Hopf}, that satisfy \eqref{vp-action-Hopf}, \eqref{psi-unital-Hopf}, \eqref{left-right-action-on-id-Hopf}, \eqref{theta-unital-Hopf}  is called a (right) \emph{cocycle double cross product pair}. 
\end{definition}

We refer the reader to \cite[Def. 2.5]{BespDrab99} for a comparison with the \emph{Hopf datum}.

\begin{proposition}\label{prop-cocycle-double-cross-prod}
Let $(\G{M}, \C{H})$ be a (right) cocycle double cross product pair. Then, $\C{G}:=\G{M}\ot \C{H}$ is a bialgebra through
\begin{equation}\label{mult-cocycly-cross-prod-multp}
(x\ot h)(x'\ot h') := x\ps{1}\cdot (h\ps{1}\rt x'\ps{1}) \ot \t(x\ps{2},h\ps{2}\rt x'\ps{2})\psi(h\ps{3},x'\ps{3})h',
\end{equation}
and
\begin{equation}\label{comultp-cross-prod}
\D(x\ot h) := (x\ps{1}\ot h\ps{1}) \ot (x\ps{2}\ot h\ps{2}),
\end{equation}
with the unit $e\ot 1 \in \G{M}\ot \C{H}$, and the counit given by $\ve(x\ot h):= \ve(x)\ve(h)$, if and only if \eqref{left-action-Hopf}, \eqref{right-action-Hopf}, \eqref{phi-map-Hopf}, and \eqref{theta-map-Hopf} are subject to
\begin{align}
& e\cdot x = x = x\cdot e, \label{phi-map-Hopf-e} \\
& \t(x,e) = \ve(x)1 = \t(e,x), \label{theta-map-Hopf-e} \\
& h\rt (x\cdot x') = (h\ps{1}\rt x\ps{1})\cdot (\psi(h\ps{2}, x\ps{2}) \rt x'), \label{left-action-Hopf-on-multp} \\
& \psi(h,x\ps{1}\cdot x'\ps{1})\t(x\ps{2}, x'\ps{2}) = \t(h\ps{1}\rt x\ps{1}, \psi(h\ps{2}, x\ps{2}) \rt x'\ps{1})\psi(\psi(h\ps{3},x\ps{3}), x'\ps{2}), \label{right-action-theta-comp-Hopf} \\
& \psi(hh', x) = \psi(h, h'\ps{1}\rt x\ps{1})\psi(h'\ps{2}, x\ps{2}),  \label{right-action-Hopf-on-multp}  \\
& \G{M} \text{ is a left } \C{H}-\text{module}, \label{M-is-left-H-module} \\
& x\cdot (x'\cdot x'') = (x\ps{1}\cdot x'\ps{1}) \cdot (\t(x\ps{2}, x'\ps{2})\rt x''), \label{phi-map-Hopf-assoc} \\
& \t(x, x'\ps{1}\cdot x''\ps{1})\t(x'\ps{2}, x''\ps{2}) = \t(x\ps{1}\cdot x'\ps{1}, \t(x\ps{2}, x'\ps{2})\rt x''\ps{1})\psi(\t(x\ps{3}, x'\ps{3}), x''\ps{2}),  \label{theta-map-Hopf-cocycle} \\
& \psi(h\ps{2},x\ps{2}) \ot h\ps{1}\rt x\ps{1} = \psi(h\ps{1},x\ps{1}) \ot h\ps{2}\rt x\ps{2}, \label{left-right-action-comultp} \\
& \t(x\ps{2},x'\ps{2}) \ot x\ps{1}\cdot x'\ps{1} =  \t(x\ps{1},x'\ps{1}) \ot x\ps{2}\cdot x'\ps{2}. \label{theta-phi-comp}
\end{align}
\end{proposition}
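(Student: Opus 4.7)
The plan is to verify the four defining conditions of a bialgebra structure on $\C{G} = \G{M} \ot \C{H}$: (i) coassociativity and counitality, which are immediate since the comultiplication \eqref{comultp-cross-prod} is the tensor coalgebra structure on $\G{M} \ot \C{H}$; (ii) unitality of the multiplication \eqref{mult-cocycly-cross-prod-multp} with respect to $e \ot 1$; (iii) associativity of \eqref{mult-cocycly-cross-prod-multp}; and (iv) that $\D$ is multiplicative, equivalently, that the multiplication is a coalgebra morphism. Conditions (ii)--(iv) are what produce the stated relations.

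I would begin with (ii). Computing $(e \ot 1)(x \ot h)$ from \eqref{mult-cocycly-cross-prod-multp}, using the standing hypotheses $1 \rt x = x$ and $\psi(1,x) = \ve(x)1$ along with $e$ being grouplike, collapses the formula to $e \cdot x\ps{1} \ot \t(e, x\ps{2}) h$. A symmetric computation of $(x \ot h)(e \ot 1)$, now applying $h \rt e = \ve(h) e$ and $\psi(h,e) = h$, reduces to $x\ps{1} \cdot e \ot \t(x\ps{2}, e) h$. Requiring these to equal $x \ot h$ for all inputs is equivalent to \eqref{phi-map-Hopf-e} and \eqref{theta-map-Hopf-e}.

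For step (iv), I would apply $\D$ to both sides of \eqref{mult-cocycly-cross-prod-multp} and use the fact that $\phi,\t,\vp,\psi$ are all coalgebra morphisms. The left hand side $\D(x \ot h)\D(x'\ot h')$ re-Sweedlers $x,h,x',h'$ uniformly, whereas $\D$ of the right hand side produces a grouping in which $\psi(h,x)$ sits at a different Sweedler level than $h\rt x$, and likewise $\t(x,x')$ versus $x \cdot x'$. Equating the two expressions forces precisely the ``Sweedler-swap'' identities \eqref{left-right-action-comultp} and \eqref{theta-phi-comp}. For step (iii), a direct expansion of both sides of the associativity equation yields unwieldy Sweedler expressions, so I would streamline the argument by exploiting the factorization $(x \ot h) = (x \ot 1)(e \ot h)$, which is an immediate consequence of the unit calculation. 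It then suffices to check associativity on triples built from $(x \ot 1)$ and $(e \ot h)$, and the non-trivial cases split naturally: the triple $(x \ot 1)(x' \ot 1)(x'' \ot 1)$ yields \eqref{phi-map-Hopf-assoc} and \eqref{theta-map-Hopf-cocycle}; the triple $(e \ot h)(x' \ot 1)(x'' \ot 1)$ yields \eqref{left-action-Hopf-on-multp} and \eqref{right-action-theta-comp-Hopf}; the triple $(e \ot h)(e \ot h')(x'' \ot 1)$ yields \eqref{M-is-left-H-module} and \eqref{right-action-Hopf-on-multp}; the remaining combinations reduce to associativity in $\C{H}$ or are implied by the preceding identities.

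The main obstacle will be bookkeeping the three-fold Sweedler expansions produced by iterating \eqref{mult-cocycly-cross-prod-multp}; the decisive tool that disentangles them is the pair \eqref{left-right-action-comultp} and \eqref{theta-phi-comp} obtained in step (iv), which allow free re-indexing between the ``$\ps{1}$'' and ``$\ps{2}$'' components of $h\rt x$ versus $\psi(h,x)$ (and of $x\cdot x'$ versus $\t(x,x')$) so that the terms on both sides of the associativity equation can be aligned before one reads off the remaining relations.
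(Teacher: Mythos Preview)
Your proposal is correct and follows essentially the same strategy the paper uses. Note that the paper does not actually supply a proof for this particular proposition---it is a recollection of the unified product of Agore--Militaru---but the detailed proof given later for the more general bicocycle case (Proposition~\ref{prop-bicocycle-double-cross-prod}) proceeds exactly as you outline: unitality yields \eqref{phi-map-Hopf-e}--\eqref{theta-map-Hopf-e}, associativity on distinguished triples of the form $(x\ot 1)$ and $(e\ot h)$ yields the remaining structural identities, and multiplicativity of $\D$ yields the swap conditions \eqref{left-right-action-comultp}--\eqref{theta-phi-comp}. The only cosmetic difference is ordering: the paper checks associativity before multiplicativity, whereas you establish the swap identities first and use them to tidy the Sweedler bookkeeping in the associativity step---a perfectly sound (and arguably cleaner) variant.
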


The bialgebra $\G{M} \rtimes\C{H}:= \G{M}\ot\C{H}$ of Proposition \ref{prop-cocycle-double-cross-prod} is in fact the (right handed) unified product of $\G{M}$ and $\C{H}$, in \cite[Thm. 2.4]{AgorMili11}, and $\Om(\C{H}):=(\G{M},\rt,\psi,\t)$ an extending datum of the bialgebra $\C{H}$ in \cite[Def. 2.1]{AgorMili11}. We shall, on the other hand, use the notation $\G{M} \bowtie_\theta\C{H}$ to emphasize the twisted cocycle, and call it (right) \emph{cocycle double cross product}  in order to highlight its intermediate place in a hierarchy of constructions. The precise unified product in \cite[Thm. 2.4]{AgorMili11} is denoted by $\G{M} {\,}_{\g \hspace{-0.1cm}}\bowtie\C{H}$ in the present terminology, and may be referred as a (left) \emph{cocycle double cross product}.

A series of remarks are in order.

\begin{remark}
Let us note that the left action \eqref{left-action-Hopf} being a map of coalgebras is equivalent to $\G{M}$ being a (left) $\C{H}$-module coalgebra. Similarly, if in particular \eqref{right-action-Hopf} is a right action, that its being a coalgebra map will mean $\C{H}$ being a (right) $\G{M}$-module coalgebra. In this case, \eqref{left-action-Hopf-on-multp}, \eqref{right-action-Hopf-on-multp}, and \eqref{left-right-action-comultp} are nothing but (7.7), (7.8) and (7.9) of \cite[Def. 7.2.1]{Majid-book}. If, furthermore, \eqref{theta-map-Hopf} is trivial, then Proposition \ref{prop-cocycle-double-cross-prod} above coincides with  \cite[Thm. 7.2.2]{Majid-book}.
\end{remark}

\begin{remark}
If the left action \eqref{left-action-Hopf} is trivial (given by the counit), then the algebra $\G{M} \bowtie_\theta\C{H}$ given in Proposition \ref{prop-cocycle-double-cross-prod} is nothing but $\G{M} \ltimes_\theta\C{H}$ of \cite[Prop. 6.3.7]{Majid-book}, since in this case the condition (6.27) of \cite[Prop. 6.3.7]{Majid-book} follows from \eqref{right-action-Hopf-on-multp}, (6.28) from \eqref{right-action-theta-comp-Hopf}, and (6.29) from \eqref{theta-map-Hopf-cocycle}. If, furthermore, \eqref{right-action-Hopf} is a right action and the multiplication on $\C{H}$ is trivial; that is, given by the addition (in other words $\C{H}$ is regarded only as a vector space), then \eqref{theta-map-Hopf-cocycle} indicates that \eqref{theta-map-Hopf} is a 2-cocycle in the algebra Hochschild cohomology of $\G{M}$ with coefficients in $\C{H}$. In short,
\[
\t\in H^2(\G{M},\C{H}).
\]
\end{remark}

\begin{remark}
Finally, let us mention that the cocycle cross product construction does fit into the more general Brzezinski cross product construction (cross product with a vector space), \cite[Prop. 2.1]{Brze97-II}. More precisely, it falls into the realm of  \cite[Prop. 2.2]{Brze97-II}; cross product by a coalgebra construction, associated to the entwining data $(\C{H},\G{M},\rho,e,\rho^\G{M})$ given by
\[
\rho:\C{H}\ot \G{M} \to \G{M}\ot \C{H}, \qquad \rho(h\ot x) := h\ps{1}\rt x \ot h\ps{2},
\]
and
\[
\rho^\G{M}:\G{M}\ot \G{M} \to \G{M}\ot \G{M}, \qquad \rho^\G{M}(x\ot x') := x\ps{1} \cdot x'\ot x\ps{2}.
\]
\end{remark}

An analogue of \cite[Thm. 7.2.3]{Majid-book} and \cite[Thm. 2.7]{AgorMili11}, compare with \cite[Prop. 2.3]{BespDrab01}, is stated below. A more general statement will be proved below, and hence the proof is omitted.  

\begin{proposition}\label{prop-universal-I}
Given a coalgebra $\G{M}$, and two bialgebras $\C{G}$ and $\C{H}$ that fit into
\[
\xymatrix{
\G{M} \ar@{^{(}->}[r]_i  & \C{G}    &  \ar@{_{(}->}[l]^j_{\rm alg}\C{H},
}
\]
in the category of coalgebras, if $\mu\circ (i\ot j):\G{M}\ot \C{H} \to\C{G}$ is an isomorphism (of coalgebras), where $\mu:\C{G}\ot \C{G}\to \C{G}$ denotes the multiplication in $\C{G}$, then $\C{G}\cong \G{M}\bowtie_\t \C{H}$ as bialgebras. In this case, the maps \eqref{left-action-Hopf}, \eqref{right-action-Hopf}, \eqref{phi-map-Hopf}, \eqref{theta-map-Hopf}  are obtained by
\begin{equation}\label{maps-cocyc-double-cross-prod-bialg}
hx = (h\ps{1}\rt  x\ps{1}) \psi(h\ps{2},x\ps{2}), \qquad xx' = \phi(x\ps{1}, x'\ps{1})\t(x\ps{2},x'\ps{2}),
\end{equation}
for any $x,x' \in \G{M}$, and any $h \in \C{H}$.
\end{proposition}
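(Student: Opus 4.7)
The proof runs in parallel with that of Proposition \ref{prop-universal-II-gr}, and a more general version is stated to be proved below (for which reason the present proof is omitted in the paper). The strategy is to transport the bialgebra structure of $\C{G}$ along the coalgebra isomorphism $F:=\mu\circ(i\ot j):\G{M}\ot\C{H}\to\C{G}$, thereby inducing a bialgebra structure on $\G{M}\ot\C{H}$, and then to recognize that induced structure as the cocycle double cross product of Proposition \ref{prop-cocycle-double-cross-prod}.

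First, because $j$ is an algebra morphism, $j(1)=1_\C{G}$, and since $F(e\ot 1)$ is forced to be the unit of the transported structure, one reads off $i(e)=1_\C{G}$. As $F$ is a coalgebra isomorphism and $i,j$ are coalgebra morphisms, the induced comultiplication on $\G{M}\ot\C{H}$ is just the tensor comultiplication \eqref{comultp-cross-prod} with counit $\ve\ot\ve$, and the unit of the induced algebra structure is $e\ot 1$.

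Next, I would use $F$ to \emph{define} the four structure maps \eqref{left-action-Hopf}, \eqref{right-action-Hopf}, \eqref{phi-map-Hopf}, \eqref{theta-map-Hopf}. Concretely, every element of $\C{G}$ admits a unique expression of the form $i(x)j(h)$, so the products $j(h)\,i(x)$ and $i(x)\,i(x')$ in $\C{G}$ determine $\vp,\psi$ and $\phi,\t$ via \eqref{maps-cocyc-double-cross-prod-bialg}. A decisive observation is that $j(h)j(h')=j(hh')$ resides entirely within $j(\C{H})$, because $j$ is an algebra morphism; this is precisely why no dual cocycle $\g:\C{H}\ot\C{H}\to\G{M}$ appears here, and why one obtains a (single) cocycle, rather than a bicocycle, double cross product. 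That $\vp,\psi,\phi,\t$ are themselves coalgebra morphisms then follows by applying $\D$ to both sides of the defining relations in \eqref{maps-cocyc-double-cross-prod-bialg} and using the injectivity of $F$, while the normalizations \eqref{vp-action-Hopf}--\eqref{theta-unital-Hopf} are obtained by specializing those relations at $x=e$ or $h=1$.

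Finally, expanding $i(x)j(h)i(x')j(h')$ step by step through \eqref{maps-cocyc-double-cross-prod-bialg} (first rewriting the middle $j(h)\,i(x')$, and then the ensuing $i(\cdot)i(\cdot)$ product) delivers exactly the multiplication formula \eqref{mult-cocycly-cross-prod-multp} on $\G{M}\ot\C{H}$. The associativity and unitality of the multiplication of $\C{G}$, together with the injectivity of $F$, then force all of the compatibilities \eqref{phi-map-Hopf-e}--\eqref{theta-phi-comp}, so that by Proposition \ref{prop-cocycle-double-cross-prod} the pair $(\G{M},\C{H})$ constitutes a cocycle double cross product pair and $F$ descends to a bialgebra isomorphism $\C{G}\cong\G{M}\bowtie_\t\C{H}$. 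The main obstacle is the Sweedler-index bookkeeping needed to rewrite each $F^{-1}$-image in the normal form $i(x)j(h)$; once this normal form is applied consistently, each required identity reduces to the matching equation of Proposition \ref{prop-cocycle-double-cross-prod}.
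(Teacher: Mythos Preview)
Your proposal is correct and follows essentially the same approach as the paper's proof of the more general Proposition~\ref{prop-universal-II}, to which the paper explicitly defers (the proof of Proposition~\ref{prop-universal-I} is omitted precisely because that later result subsumes it). In particular, you correctly isolate the one point where the present situation is simpler: since $j:\C{H}\hookrightarrow\C{G}$ is an algebra morphism, the product $j(h)j(h')=j(hh')$ already lies in $j(\C{H})$, so the map $r$ of \eqref{r-map-II} degenerates and no cocycle $\g$ is needed, reducing the bicocycle double cross product to the single-cocycle version $\G{M}\bowtie_\t\C{H}$.
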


Using now Proposition \ref{prop-universal-I}, we can express the universal enveloping algebra $U(\G{g})$ of a cocycle double cross sum Lie algebra $\G{g}:=\G{m}\bowtie_\t\G{h}$ as a cocycle double cross product algebra.

\begin{corollary}\label{prop-U-cocycle-double-cross-sum}
Given the cocycle double cross sum Lie algebra $\G{g}:=\G{m}\bowtie_\t\G{h}$, let $\widetilde{U}(\G{m}):= U(\G{g}) / U(\G{g})U(\G{h})^+$, where $U(\G{h})^+ := \ker\ve|_{U(\G{h})}$ for $\ve:U(\G{g})\to k$.
Then, $U(\G{m}\bowtie_\t\G{h}) \cong \widetilde{U}(\G{m}) \bowtie_\t U(\G{h})$ as bialgebras.
\end{corollary}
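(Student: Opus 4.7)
The plan is to apply Proposition \ref{prop-universal-I} with $\G{M} := \widetilde{U}(\G{m})$, $\C{H} := U(\G{h})$, and $\C{G} := U(\G{g})$. Since the notation $\G{m}\bowtie_\t\G{h}$ refers to the case $\g = 0$, the identity \eqref{mu-Jacobi-and-psi-action-II} collapses to the Jacobi identity for $\mu$, and hence $\G{h}$ is an honest Lie subalgebra of $\G{g}$. The Lie inclusion therefore lifts functorially to a sub-Hopf-algebra embedding $j\colon U(\G{h})\hookrightarrow U(\G{g})$, which supplies the required simultaneously algebra-and-coalgebra morphism of Proposition \ref{prop-universal-I}.

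First I would endow $\widetilde{U}(\G{m})$ with a coalgebra structure making $\pi\colon U(\G{g})\to\widetilde{U}(\G{m})$ a coalgebra morphism. The key point is that $U(\G{h})^+$ is a coideal of $U(\G{h})$: for $b\in U(\G{h})^+$ one may rewrite $\D(b) = \big(b\ps{1}-\ve(b\ps{1})\cdot 1\big)\ot b\ps{2} + 1\ot b \in U(\G{h})^+\ot U(\G{h}) + U(\G{h})\ot U(\G{h})^+$. A short computation then gives $\D(I)\subseteq U(\G{g})\ot I + I\ot U(\G{g})$ for $I := U(\G{g})U(\G{h})^+$, so that $I$ is a coideal of $U(\G{g})$. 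Hence $\widetilde{U}(\G{m}) = U(\G{g})/I$ carries a coalgebra structure with $\pi$ a coalgebra map and $\bar 1 := \pi(1)$ grouplike.

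Next I would exhibit a coalgebra splitting $i\colon\widetilde{U}(\G{m})\to U(\G{g})$ of $\pi$ using the Poincaré--Birkhoff--Witt theorem. Fixing totally ordered Hamel bases of $\G{m}$ and of $\G{h}$ with the $\G{m}$-basis preceding the $\G{h}$-basis, PBW produces a linear isomorphism $S\ot U(\G{h}) \xrightarrow{\cong} U(\G{g})$, where $S$ is the span of ordered monomials in the $\G{m}$-basis. Since $U(\G{h})^+$ is the augmentation ideal of $U(\G{h})$, under this identification $I$ corresponds to $S\ot U(\G{h})^+$, so $\pi|_{S}\colon S\xrightarrow{\cong}\widetilde{U}(\G{m})$ is a linear bijection, and I define $i$ as its inverse followed by the inclusion $S\hookrightarrow U(\G{g})$. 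By construction the multiplication composite $m\circ(i\ot j)\colon\widetilde{U}(\G{m})\ot U(\G{h})\to U(\G{g})$ recovers the PBW map, and is a linear bijection.

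The crux will be promoting $m\circ(i\ot j)$ to an isomorphism of coalgebras, which (since $j$ is a bialgebra morphism) amounts to checking that $i$ is a coalgebra morphism. For this I would invoke the coalgebra-level refinement of PBW: in characteristic zero the symmetrization map $S(\G{g})\to U(\G{g})$ is an isomorphism of coalgebras, whence $U(\G{g})\cong S(\G{g}) = S(\G{m})\ot S(\G{h}) \cong \widetilde{U}(\G{m})\ot U(\G{h})$ as coalgebras, and the canonical choice of $i$ above realises this identification as $m\circ(i\ot j)$. With the coalgebra isomorphism in hand, Proposition \ref{prop-universal-I} immediately furnishes the bialgebra isomorphism $U(\G{m}\bowtie_\t\G{h})\cong\widetilde{U}(\G{m})\bowtie_\t U(\G{h})$, whose extending datum on the right is inherited from the Lie-algebraic maps $\vp,\psi,\phi,\t$ via \eqref{maps-cocyc-double-cross-prod-bialg}.
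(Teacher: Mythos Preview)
Your overall strategy coincides with the paper's: identify $\widetilde{U}(\G{m})$ with the span $S$ of PBW-ordered monomials in a basis of $\G{m}$, take $j$ to be the Hopf-algebra embedding $U(\G{h})\hookrightarrow U(\G{g})$, verify that $\mu\circ(i\ot j)$ is a bijection, and invoke Proposition~\ref{prop-universal-I}.

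The gap lies in your final step. You argue that $i$ is a coalgebra map by appealing to the symmetrization isomorphism $S(\G{g})\to U(\G{g})$ in characteristic zero. That isomorphism does yield \emph{some} coalgebra isomorphism $U(\G{g})\cong \widetilde{U}(\G{m})\ot U(\G{h})$, but it does not follow that your specific map $\mu\circ(i\ot j)$ is one. Indeed, the symmetrization of a monomial $\xi_{i_1}\cdots\xi_{i_k}$ with all $\xi_{i_\ell}\in\G{m}$ involves commutators $[\xi_{i_a},\xi_{i_b}]$, which have nontrivial $\G{h}$-components via $\t$ since $\G{m}$ is not a subalgebra; hence the image of symmetrization restricted to $S(\G{m})$ is \emph{not} the span $S$ of ordered $\G{m}$-monomials. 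The two candidate sections of $\pi$ therefore differ, and you cannot transfer the coalgebra property from one to the other.

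The paper avoids this detour entirely by observing directly that $S$ is already a subcoalgebra of $U(\G{g})$. For an ordered monomial one has
\[
\D(\xi_1^{r_1}\cdots\xi_m^{r_m}) \;=\; \prod_{i=1}^m(\xi_i\ot 1+1\ot\xi_i)^{r_i} \;=\; \sum_{j_1,\ldots,j_m}\prod_{i}\binom{r_i}{j_i}\,\xi_1^{j_1}\cdots\xi_m^{j_m}\ot\xi_1^{r_1-j_1}\cdots\xi_m^{r_m-j_m},
\]
and every tensor factor on the right is again an ordered monomial in $S$. Hence $i$ is a coalgebra embedding outright, with no characteristic-zero hypothesis and no symmetrization needed; the rest of your argument then goes through unchanged.
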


\begin{proof}
A vector space basis for $\widetilde{U}(\G{m})$ may be given by
\[
\{\xi_1^{r_1}\ldots \xi_m^{r_m}\mid r_1,\ldots, r_m \geq 0\},
\]
where $\{\xi_1,\ldots,\xi_m\} \subseteq \G{g}$ is a basis for the complement of $\G{h}\subseteq \G{g}$, see also \cite[Prop. 2.2.7]{Dixmier-book}. Similarly, a vector space basis for $U(\G{h})$ may be given by
\[
\{\xi_{m+1}^{r_{m+1}}\ldots \xi_n^{r_n} \mid r_{m+1},\ldots, r_n \geq 0\},
\]
where $\{\xi_{m+1},\ldots,\xi_n\} \subseteq \G{g}$ is a basis for $\G{h}$. On the other hand, both $\widetilde{U}(\G{m})$ and $U(\G{h})$ are subcoalgebras of $U(\G{g})$, with respect to the trivial (vector space basis elements of $\G{g}$ being primitive) vector space structure. Accordingly, we have 
\[
\xymatrix{
\widetilde{U}(\G{m}) \ar@{^{(}->}[r]_i  & U(\G{g})    &  \ar@{_{(}->}[l]^j_{\rm alg}  U(\G{h}).
}
\]
The claim then follows from Proposition \ref{prop-universal-I}. 
\end{proof}

\begin{remark}
The cocycle for the cocycle double cross product bialgebra is determined as in \eqref{maps-cocyc-double-cross-prod-bialg}, though with a slight abuse of notation we used the same symbol as the one in the Lie algebra level.
\end{remark}

\subsection{Cocycle Double Cross Coproduct Bialgebras}
\label{subsect-Cocycle-double-cross-coproduct-bialgebra}~

In the present subsection we shall present a (categorical) dual construction to that of the above subsection. More precisely, given a bialgebra $\C{H}$, and an algebra $\G{M}$ with a distinguished character $\eta:\G{M}\to k$, we shall consider a bialgebra structure on $\G{M} \dcc^\s \C{H} := \G{M}\ot \C{H}$, that fits into the picture
\[
\xymatrix{
\G{M}  &\ar@{->>}[l]_q \C{L}  \ar@{->>}[r]^p_{\rm coalg} &  \C{H}  
}
\]
in the category of algebras (and algebra homomorphisms).

To this end, let 
\begin{equation}\label{left-coaction-Hopf-new}
\nb:\G{M}\to \C{H}\ot \G{M}, \qquad x\mapsto \nb(x):=x\ns{-1}\ot x\ns{0},
\end{equation}
satisfy
\begin{equation}\label{nb-map-coaction-Hopf}
\ve(x\ns{-1})x\ns{0} = x,
\end{equation}
for any $x\in \G{M}$, and let there also be a linear map (not a priori coaction)
\begin{equation}\label{right-coaction-Hopf-new}
\Db:\C{H}\to \C{H}\ot \G{M}, \qquad h\mapsto \Db(h):=h\nsb{0}\ot h\nsb{1}
\end{equation}
that satisfies
\begin{equation}\label{Db-map-Hopf-counital}
 h\nsb{0}\eta(h\nsb{1}) = h
\end{equation}
for any $h\in \C{H}$. Analogue to \eqref{left-right-action-on-id-Hopf}; we shall assume that \eqref{left-coaction-Hopf-new} and \eqref{right-coaction-Hopf-new} interacts with the counits via
\begin{equation}\label{left-right-coaction-on-character-Hopf-new}
x\ns{-1}\eta(x\ns{0}) = \eta(x)1, \qquad  \ve(h\nsb{0})h\nsb{1}  = \ve(h)1.
\end{equation}
Furthermore, let there be the linear maps
\begin{equation}\label{delta-map-Hopf}
\d:\G{M}\to \G{M}\ot \G{M}, \qquad x\mapsto \d(x):=x\pr{1}\ot x\pr{2}
\end{equation}
and
\begin{equation}\label{sigma-map-Hopf}
\s:\C{H}\to \G{M}\ot \G{M}, \qquad h\mapsto \s(h):= h^{\pr{1}}\ot h^{\pr{2}}
\end{equation}
the latter of which satisfying
\begin{equation}\label{sigma-map-Hopf-counital}
\eta (h^{\pr{1}})\eta (h^{\pr{2}} )= \ve(h),
\end{equation}
for any $h\in \C{H}$, to be able to realize the projection $\G{M}\ot \C{H}\to \C{H}$ as a coalgebra homomorphism. 

Just as we work in the category of coalgebras in Subsection \ref{subsect-Cocycle-double-cross-product-bialgebra}, this time we shall work in the category of algebras; that is, we shall assume that the maps \eqref{left-coaction-Hopf-new}, \eqref{right-coaction-Hopf-new}, \eqref{delta-map-Hopf}, and \eqref{sigma-map-Hopf} are algebra homomorphisms (assuming the tensor product algebra structure on $\G{M}\ot \C{H}$). More explicitly,
\begin{align}\label{multiplicativity-of-maps}
\begin{split}
& \nb(xx') = x\ns{-1}x'\ns{-1}\ot x\ns{0}x'\ns{0}, \qquad \nb(1) = 1 \ot 1, \\
& \Db(hh') = h\nsb{0}h'\nsb{0}\ot h\nsb{1}h'\nsb{1}, \qquad \Db(1) = 1 \ot 1, \\
& \d(xx') =  x\pr{1}x'\pr{1}\ot x\pr{2}x'\pr{2}, \qquad \d(1) = 1\ot 1, \\
& \s(hh') =  h^{\pr{1}}{h'}^{\pr{1}}\ot h^{\pr{2}}{h'}^{\pr{2}}, \qquad \s(1) = 1\ot 1,
\end{split}
\end{align}
for all $x,x' \in \G{M}$, and any $h,h'\in \C{H}$.

\begin{definition}
Let $\G{M}$ be an algebra with a distinguished character $\eta:\G{M}\to k$, and let $\C{H}$ be a bialgebra. Then the pair $(\G{M},\C{H})$, equipped with algebra maps \eqref{left-coaction-Hopf-new}, \eqref{right-coaction-Hopf-new}, \eqref{delta-map-Hopf}, and \eqref{sigma-map-Hopf}, that satisfy \eqref{nb-map-coaction-Hopf}, \eqref{Db-map-Hopf-counital}, \eqref{left-right-coaction-on-character-Hopf-new}, \eqref{sigma-map-Hopf-counital} is called a ``(right) cocycle double cross coproduct pair.'' 
\end{definition}

Once again, we refer the reader to \cite[Def. 2.5]{BespDrab99} for a comparison with the ``Hopf datum''. Now, analogue to Proposition \ref{prop-cocycle-double-cross-prod}, we have the following.

\begin{proposition}\label{prop-cocycle-double-cross-coprod}
Let $(\G{M}, \C{H})$ be a (right) cocycle double cross coproduct pair. Then, the tensor product $\C{G}:=\C{H}\ot \G{M}$ is a bialgebra through
\begin{equation}\label{multp-cross-coprod}
(x\ot h)(x'\ot h') := xx' \ot hh',
\end{equation}
and
\begin{equation}\label{comult-cocycly-cross-coprod-comultp}
\D_{\dcc}(x\ot h) := (x\pr{1}{h\ps{1}}^{\pr{1}}\ot x\pr{2}\ns{-1}{h\ps{1}}^{\pr{2}}\ns{-1} h\ps{2}\nsb{0} )\ot (x\pr{2}\ns{0}{h\ps{1}}^{\pr{2}}\ns{0}h\ps{2}\nsb{1}\ot h\ps{3}),
\end{equation}
with the unit $1\ot 1 \in \G{M}\ot \C{H}$ and the counit $\ve_{\dcc}(x\ot h):= \eta(x)\ve(h)$, if and only if \eqref{left-coaction-Hopf-new}, \eqref{right-coaction-Hopf-new}, \eqref{delta-map-Hopf}, and \eqref{sigma-map-Hopf} are subject to
\begin{align}
& \eta(x\pr{1})x\pr{2} = x = x\pr{1}\eta(x\pr{2}), \label{delta-counital} \\
& \eta (h^{\pr{1}})h^{\pr{2}} = \ve(h)1 = h^{\pr{1}} \eta(h^{\pr{2}}), \label{sigma-counital} \\
& \G{M} \text{ is a left } \C{H}-{comodule}, \label{M-is-left-H-comodule} \\
& x\pr{1} \ot x\pr{2}\pr{1} \ot x\pr{2}\pr{2} = x\pr{1}\pr{1}{x\pr{2}\ns{-1}}^{\pr{1}} \ot x\pr{1}\pr{2}{x\pr{2}\ns{-1}}^{\pr{2}} \ot x\pr{2}\ns{0}, \label{delta-coassoc} \\
& x\ns{-1} \ot x\ns{0}\pr{1}\ot  x\ns{0}\pr{2} = x\pr{1}\ns{-1} x\pr{2}\ns{-1}\nsb{0} \ot x\pr{1}\ns{0}  x\pr{2}\ns{-1}\nsb{1} \ot x\pr{2}\ns{0}, \label{left-comod-coalg} \\
& {h\ps{1}}^{\pr{1}} \ot {h\ps{1}}^{\pr{2}}\pr{1}{h\ps{2}}^{\pr{1}} \ot {h\ps{1}}^{\pr{2}}\pr{2}{h\ps{2}}^{\pr{2}} = \notag\\
& \hspace{1.5cm} {h\ps{1}}^{\pr{1}}\pr{1} {{h\ps{1}}^{\pr{2}}\ns{-1}}^{\pr{1}} {h\ps{2}\nsb{0}}^{\pr{1}} \ot  {h\ps{1}}^{\pr{1}}\pr{2} {{h\ps{1}}^{\pr{2}}\ns{-1}}^{\pr{2}} {h\ps{2}\nsb{0}}^{\pr{2}} \ot {h\ps{1}}^{\pr{2}}\ns{0} h\ps{2}\nsb{1},  \label{h-h-h-I} \\
& h\ps{1}\nsb{0} \ot  h\ps{1}\nsb{1}\pr{1} {h\ps{2}}^{\pr{1}} \ot h\ps{1}\nsb{1}\pr{2} {h\ps{2}}^{\pr{2}} = \notag \\
& \hspace{3cm}  {h\ps{1}}^{\pr{1}}\ns{-1} {h\ps{1}}^{\pr{2}}\ns{-1}\nsb{0} h\ps{2}\nsb{0}\nsb{0} \ot {h\ps{1}}^{\pr{1}}\ns{0} {h\ps{1}}^{\pr{2}}\ns{-1}\nsb{1} h\ps{2}\nsb{0}\nsb{1} \ot {h\ps{1}}^{\pr{2}}\ns{0} h\ps{2}\nsb{1}, \label{h-h-h-II} \\
& h\ps{1}\nsb{0} \ot h\ps{1}\nsb{1}\ns{-1} h\ps{2}\nsb{0} \ot h\ps{1}\nsb{1}\ns{0} h\ps{2}\nsb{1} = h\nsb{0}\ps{1} \ot h\nsb{0}\ps{2} \ot h\nsb{1}, \label{h-h-h-III} \\
& x\ns{-1}h\nsb{0}\ot x\ns{0}h\nsb{1} = h\nsb{0}x\ns{-1}\ot h\nsb{1}x\ns{0}, \label{x-h-switch} \\
& h^{\pr{1}}x\pr{1} \ot h^{\pr{2}}x\pr{2} = x\pr{1}h^{\pr{1}} \ot x\pr{2}h^{\pr{2}} \label{sigma-delta-switch}
\end{align}
for any $x \in \G{M}$, and any $h \in\C{H}$.
\end{proposition}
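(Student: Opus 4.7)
The plan is to verify the bialgebra axioms for $(\C{G}, \cdot, \D_{\dcc}, \ve_{\dcc})$ in direct parallel with (indeed, as the categorical dual of) the proof of Proposition \ref{prop-cocycle-double-cross-prod}. The algebra structure \eqref{multp-cross-coprod} is the plain tensor product of the algebras $\G{M}$ and $\C{H}$, so associativity and unitality on the algebra side are automatic and the multiplicativity of $\ve_{\dcc} = \eta\otimes\ve$ is immediate. All the content therefore lies in (i) the counit axioms for $\D_{\dcc}$, (ii) the coassociativity of $\D_{\dcc}$, and (iii) the multiplicativity of $\D_{\dcc}$ as a map of algebras.

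For step (i), I would apply $\ve_{\dcc}\otimes\Id$ to $\D_{\dcc}(x\otimes h)$, use the multiplicativity of $\eta$ and $\ve$, and then invoke the counit normalizations \eqref{nb-map-coaction-Hopf}, \eqref{Db-map-Hopf-counital} and the first half of \eqref{left-right-coaction-on-character-Hopf-new} to collapse the $\nb$- and $\Db$-terms. What remains is $\eta(x\pr{1})x\pr{2}\otimes \eta({h\ps{1}}^{\pr{1}}){h\ps{1}}^{\pr{2}}\ve(h\ps{2})h\ps{3}$, which equals $x\otimes h$ precisely when the left halves of \eqref{delta-counital} and \eqref{sigma-counital} hold. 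Applying $\Id\otimes\ve_{\dcc}$ and performing the symmetric collapses yields the right halves of the same identities.

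For step (ii), I would expand $(\D_{\dcc}\otimes\Id)\D_{\dcc}(x\otimes h)$ and $(\Id\otimes\D_{\dcc})\D_{\dcc}(x\otimes h)$ as elements of $(\G{M}\otimes\C{H})^{\otimes 3}$. The two expressions carry triply-iterated versions of $\d$, $\s$, $\nb$, $\Db$, and $\D$, and equating them slot by slot yields independent families of identities. Specializing $h=1$ isolates the purely $\G{M}$-component, producing the twisted coassociativity \eqref{delta-coassoc} of $\d$ corrected by $\s\circ\nb$, the compatibility \eqref{left-comod-coalg} between $\nb$ and $\d$, and the left comodule axiom \eqref{M-is-left-H-comodule} that ensures $\nb$ is genuinely a coaction. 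Specializing $x=1$ and sorting the remaining terms according to the positions into which the three Sweedler components of $h$ flow gives \eqref{h-h-h-I} and \eqref{h-h-h-II} (both expressing how $\s$ interlocks with $\Db$ through the coproduct of $h$), together with \eqref{h-h-h-III}, which is the coaction-like coassociativity of $\Db$ relative to $\D$.

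For step (iii), multiplicativity of $\D_{\dcc}$ is checked by expanding $\D_{\dcc}(xx'\otimes hh')$ and comparing it with the product $\D_{\dcc}(x\otimes h)\D_{\dcc}(x'\otimes h')$ in the tensor square algebra. The already-assumed algebra-morphism properties \eqref{multiplicativity-of-maps} of $\d$, $\s$, $\nb$, $\Db$ handle all diagonal contributions, while the genuinely new content comes from reordering $\nb(x')$ past $\Db(h)$, which forces \eqref{x-h-switch}, and from reordering $\s(h)$ past $\d(x')$, which forces \eqref{sigma-delta-switch}. The principal obstacle throughout is purely bookkeeping: each slot of $\D_{\dcc}$ is already a four-fold product involving a triple Sweedler expansion of $h$ entangled with $\s$, $\Db$, and $\nb$, so the six-fold tensor produced by coassociativity must be disentangled with considerable care in order to separate the displayed identities from mere algebraic combinations thereof, applying the normalizations \eqref{nb-map-coaction-Hopf}--\eqref{sigma-map-Hopf-counital} and \eqref{multiplicativity-of-maps} at each stage to perform the relevant collapses.
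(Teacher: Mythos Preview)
Your plan matches the paper's proof in substance: counitality yields \eqref{delta-counital}--\eqref{sigma-counital}, multiplicativity of $\D_{\dcc}$ yields \eqref{x-h-switch}--\eqref{sigma-delta-switch}, and coassociativity on the generators $x\ot 1$ and $1\ot h$ yields \eqref{M-is-left-H-comodule}--\eqref{h-h-h-III}. The one point to watch is the order of your steps (ii) and (iii). In the paper the multiplicativity of $\D_{\dcc}$ is established \emph{before} coassociativity, precisely so that coassociativity need only be verified on $x\ot 1$ and $1\ot h$; since $x\ot h=(x\ot 1)(1\ot h)$ and $\D_{\dcc}$ is already known to be an algebra map, the general case follows. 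In your ordering, specializing $h=1$ and $x=1$ extracts the necessary conditions, but the sufficiency direction (that \eqref{M-is-left-H-comodule}--\eqref{h-h-h-III} force coassociativity on \emph{all} of $\G{M}\ot\C{H}$) is not yet justified without multiplicativity in hand. Swap steps (ii) and (iii), or insert the multiplicativity argument before invoking the reduction to generators, and the argument is complete.
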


\begin{proof}
Let us begin with the counitality of \eqref{comult-cocycly-cross-coprod-comultp}. To this end, we first note that
\begin{align*}
& (\ve_{\dcc}\ot \Id_{\G{M}\ot \C{H}}) \circ \D_{\dcc}(x\ot h) = \\
& \eta(x\pr{1})\eta({h\ps{1}}^{\pr{1}})\ve(x\pr{2}\ns{-1})\ve({h\ps{1}}^{\pr{2}}\ns{-1})\ve(h\ps{2}\nsb{0}) x\pr{2}\ns{0}{h\ps{1}}^{\pr{2}}\ns{0}h\ps{2}\nsb{1}\ot h\ps{3} = \\
& \eta(x\pr{1})\eta({h\ps{1}}^{\pr{1}}) x\pr{2}{h\ps{1}}^{\pr{2}}\ot h\ps{2}.
\end{align*}
Accordingly,
\begin{equation}\label{dcc-left-counital}
(\ve_{\dcc}\ot \Id_{\G{M}\ot \C{H}}) \circ \D_{\dcc}(x\ot h) = x\ot h
\end{equation}
implies, applying $\Id_{\G{M}} \ot \ve: \G{M}\ot \C{H} \to \G{M}$, 
\[
\eta(x\pr{1})\eta(h^{\pr{1}})x\pr{2}h^{\pr{2}} = \ve(h)x
\]
for any $x\in \G{M}$, and any $h\in \C{H}$. Hence, in particular, for $h=1$ we get
\begin{equation}\label{delta-counital-left}
\eta(x\pr{1})x\pr{2} = x,
\end{equation}
and for $x=1$ we obtain
\begin{equation}\label{sigma-counital-left}
\eta(h^{\pr{1}})h^{\pr{2}} = \ve(h)1.
\end{equation}
Conversely, \eqref{delta-counital-left} and \eqref{sigma-counital-left} together implies \eqref{dcc-left-counital}. On the other hand, we have
\begin{align*}
& (\Id_{\G{M}\ot \C{H}} \ot \ve_{\dcc}) \circ \D_{\dcc}(x\ot h) = \\
& x\pr{1}{h\ps{1}}^{\pr{1}}\ot x\pr{2}\ns{-1}{h\ps{1}}^{\pr{2}}\ns{-1} h\ps{2}\nsb{0} \eta(x\pr{2}\ns{0})\eta({h\ps{1}}^{\pr{2}}\ns{0})\eta(h\ps{2}\nsb{1})\ve(h\ps{3}) = \\
& x\pr{1}{h\ps{1}}^{\pr{1}}\ot  h\ps{2} \eta(x\pr{2})\eta({h\ps{1}}^{\pr{2}}).
\end{align*}
As a result, 
\begin{equation}\label{dcc-counital-right}
(\Id_{\G{M}\ot \C{H}} \ot \ve_{\dcc}) \circ \D_{\dcc}(x\ot h) = x\ot h
\end{equation}
implies, applying once again $\Id_{\G{M}} \ot \ve: \G{M}\ot \C{H} \to \G{M}$,
\[
x\pr{1}h^{\pr{1}} \eta(x\pr{2})\eta(h^{\pr{2}}) = \ve(h)x.
\]
This time, $h=1$ yields
\begin{equation}\label{delta-counital-right}
x\pr{1}\eta(x\pr{2}) = x,
\end{equation}
while $x=1$ gives
\begin{equation}\label{sigma-counital-right}
h^{\pr{1}}\eta(h^{\pr{2}}) = \ve(h)1.
\end{equation}
Moreover, similar to the arguments above; \eqref{delta-counital-right} and \eqref{sigma-counital-right} together implies \eqref{dcc-counital-right}. We may thus conclude that \eqref{comult-cocycly-cross-coprod-comultp} is counital if and only if \eqref{delta-counital} and \eqref{sigma-counital} hold.

Let us, on the next step, show that the multiplication \eqref{multp-cross-coprod} is comultiplicative; 
\begin{equation}\label{multp-is-comultp}
A\ps{1}A'\ps{1} \ot A\ps{2}A'\ps{2} = (AA')\ps{1} \ot (AA')\ps{2}
\end{equation}
for any $A,A' \in \G{M}\ot \C{H}$. To this end, it suffices to consider only $A:= 1\ot h\in \G{M}\ot \C{H}$, and $A':=x\ot 1\in \G{M}\ot \C{H}$. The identity \eqref{multp-is-comultp} thus becomes
\begin{align*}
& ({h\ps{1}}^{\pr{1}}x\pr{1}\ot {h\ps{1}}^{\pr{2}}\ns{-1} h\ps{2}\nsb{0} x\pr{2}\ns{-1})\ot ({h\ps{1}}^{\pr{2}}\ns{0}h\ps{2}\nsb{1}x\pr{2}\ns{0}\ot h\ps{3}) = \\
& (x\pr{1}{h\ps{1}}^{\pr{1}}\ot x\pr{2}\ns{-1}{h\ps{1}}^{\pr{2}}\ns{-1} h\ps{2}\nsb{0} )\ot (x\pr{2}\ns{0}{h\ps{1}}^{\pr{2}}\ns{0}h\ps{2}\nsb{1}\ot h\ps{3}).
\end{align*}
Then, the application of
\[
\eta\ot \Id_{\C{H}} \ot \Id_{\G{M}} \ot \ve: \G{M}\ot \C{H} \ot \G{M}\ot \C{H} \to \C{H}\ot \G{M}
\]
yields \eqref{x-h-switch}, while the application of 
\[
\Id_{\G{M}}\ot \ve  \ot \Id_{\G{M}\ot \C{H}} \ot \ve: \G{M}\ot \C{H} \ot \G{M}\ot \C{H} \to \G{M}\ot \G{M}
\]
gives \eqref{sigma-delta-switch}. Conversely, \eqref{x-h-switch} and \eqref{sigma-delta-switch} together imply \eqref{multp-is-comultp}.

As for the coassociativity, it suffices to check only for $x\ot 1\in \G{M}\ot \C{H}$ and $1\ot h\in \G{M}\ot \C{H}$, as a result of the multiplicativity of \eqref{comult-cocycly-cross-coprod-comultp}. 

As for the elements of the form $x\ot 1\in \G{M}\ot \C{H}$, we have
\begin{align*}
& (\Id_{\G{M}\ot \C{H}} \ot \D_{\dcc})\circ \D_{\dcc} (x\ot 1)  = \\
& (x\pr{1}\ot x\pr{2}\ns{-1})\ot \D_{\dcc}(x\pr{2}\ns{0}\ot 1) = \\
& \Big(x\pr{1}\ot x\pr{2}\ns{-1}\Big)\ot \Big(x\pr{2}\ns{0}\pr{1}\ot x\pr{2}\ns{0}\pr{2}\ns{-1}\Big)\ot \Big(x\pr{2}\ns{0}\pr{2}\ns{0}\ot 1\Big)
\end{align*}
and
\begin{align*}
& (\D_{\dcc} \ot \Id_{\G{M}\ot \C{H}} )\circ \D_{\dcc} (x\ot 1)  = \\
& \D_{\dcc}(x\pr{1}\ot x\pr{2}\ns{-1})\ot (x\pr{2}\ns{0}\ot 1) = \\
& \Big(x\pr{1}\pr{1}{x\pr{2}\ns{-1}\ps{1}}^{\pr{1}}\ot x\pr{1}\pr{2}\ns{-1}{x\pr{2}\ns{-1}\ps{1}}^{\pr{2}}\ns{-1} x\pr{2}\ns{-1}\ps{2}\nsb{0} \Big)\ot \\
& \hspace{3cm} \Big(x\pr{1}\pr{2}\ns{0}{x\pr{2}\ns{-1}\ps{1}}^{\pr{2}}\ns{0}x\pr{2}\ns{-1}\ps{2}\nsb{1}\ot x\pr{2}\ns{-1}\ps{3}\Big) \ot \Big(x\pr{2}\ns{0}\ot 1\Big).
\end{align*}
Accordingly, applying 
\begin{equation}\label{eta-eta-ve}
\eta \ot \Id_\C{H} \ot\eta \ot \Id_\C{H} \ot\Id_\G{M} \ot \ve :\G{M}\ot \C{H}\ot \G{M}\ot \C{H} \ot \G{M}\ot \C{H} \to \C{H}\ot \C{H}\ot \G{M},
\end{equation}
to
\begin{equation}\label{comultp-x}
(\Id_{\G{M}\ot \C{H}} \ot \D_{\dcc})\circ \D_{\dcc} (x\ot 1)  = (\D_{\dcc} \ot \Id_{\G{M}\ot \C{H}} )\circ \D_{\dcc} (x\ot 1)
\end{equation}
we obtain \eqref{M-is-left-H-comodule}. Similarly, the application of 
\begin{equation}\label{ve-ve-ve}
\Id_\G{M} \ot \ve \ot\Id_\G{M} \ot \ve \ot\Id_\G{M} \ot \ve :\G{M}\ot \C{H}\ot \G{M}\ot \C{H} \ot \G{M}\ot \C{H} \to \G{M}\ot \G{M}\ot \G{M},
\end{equation}
yields \eqref{delta-coassoc}, while the application of
\begin{equation}\label{eta-ve-ve}
\eta \ot \Id_\C{H} \ot \Id_\G{M}\ot \ve \ot \Id_\G{M} \ot \ve :\G{M}\ot \C{H}\ot \G{M}\ot \C{H} \ot \G{M}\ot \C{H} \to \C{H}\ot \G{M}\ot \G{M}
\end{equation}
yields \eqref{left-comod-coalg}. Conversely, assuming \eqref{M-is-left-H-comodule}, \eqref{delta-coassoc}, and \eqref{left-comod-coalg}, we may observe that
\begin{align*}
& (\Id_{\G{M}\ot \C{H}} \ot \D_{\dcc})\circ \D_{\dcc} (x\ot 1)  = \\
& \Big(x\pr{1}\ot x\pr{2}\ns{-1}\Big)\ot \Big(x\pr{2}\ns{0}\pr{1}\ot x\pr{2}\ns{0}\pr{2}\ns{-1}\Big)\ot \Big(x\pr{2}\ns{0}\pr{2}\ns{0}\ot 1\Big) = \\
& \Big(x\pr{1}\ot x\pr{2}\pr{1}\ns{-1}x\pr{2}\pr{2}\ns{-1}\nsb{0}\Big)\ot \Big(x\pr{2}\pr{1}\ns{0}x\pr{2}\pr{2}\ns{-1}\nsb{1}\ot x\pr{2}\pr{2}\ns{0}\ns{-1}\Big)\ot \Big(x\pr{2}\pr{2}\ns{0}\ns{0}\ot 1\Big) = \\
& \Big(x\pr{1}\ot x\pr{2}\pr{1}\ns{-1}x\pr{2}\pr{2}\ns{-2}\nsb{0}\Big)\ot \Big(x\pr{2}\pr{1}\ns{0}x\pr{2}\pr{2}\ns{-2}\nsb{1}\ot x\pr{2}\pr{2}\ns{-1}\Big)\ot \Big(x\pr{2}\pr{2}\ns{0}\ot 1\Big) = \\
& \Big(x\pr{1}\pr{1}{x\pr{2}\ns{-1}}^{\pr{1}}\ot x\pr{1}\pr{2}\ns{-1}{x\pr{2}\ns{-1}}^{\pr{2}}\ns{-1}x\pr{2}\ns{0}\ns{-2}\nsb{0}\Big)\ot \\
& \hspace{3cm}\Big(x\pr{1}\pr{2}\ns{0} {x\pr{2}\ns{-1}}^{\pr{2}}\ns{0}x\pr{2}\ns{0}\ns{-2}\nsb{1}\ot x\pr{2}\ns{0}\ns{-1}\Big)\ot \Big(x\pr{2}\ns{0}\ns{0}\ot 1\Big) = \\
& \Big(x\pr{1}\pr{1}{x\pr{2}\ns{-3}}^{\pr{1}}\ot x\pr{1}\pr{2}\ns{-1}{x\pr{2}\ns{-3}}^{\pr{2}}\ns{-1} x\pr{2}\ns{-2}\nsb{0} \Big)\ot \\
& \hspace{3cm} \Big(x\pr{1}\pr{2}\ns{0}{x\pr{2}\ns{-3}}^{\pr{2}}\ns{0}x\pr{2}\ns{-2}\nsb{1}\ot x\pr{2}\ns{-1}\Big) \ot \Big(x\pr{2}\ns{0}\ot 1\Big) = \\
& (\D_{\dcc} \ot \Id_{\G{M}\ot \C{H}} )\circ \D_{\dcc} (x\ot 1),
\end{align*}
where in the second equality we used \eqref{left-comod-coalg}, and in the fourth equality we used \eqref{delta-coassoc}.

Now, for the elements of the form $1\ot h\in \G{M}\ot \C{H}$, on the one hand we have
\begin{align*}
& (\Id_{\G{M}\ot \C{H}} \ot \D_{\dcc})\circ \D_{\dcc} (1\ot h)  = \\
& ({h\ps{1}}^{\pr{1}}\ot {h\ps{1}}^{\pr{2}}\ns{-1} h\ps{2}\nsb{0} )\ot \D_{\dcc}({h\ps{1}}^{\pr{2}}\ns{0}h\ps{2}\nsb{1}\ot h\ps{3}) = \\
& \Big({h\ps{1}}^{\pr{1}}\ot {h\ps{1}}^{\pr{2}}\ns{-1} h\ps{2}\nsb{0} \Big)\ot \\
& \Big({h\ps{1}}^{\pr{2}}\ns{0}\pr{1}h\ps{2}\nsb{1}\pr{1}{h\ps{3}}^{\pr{1}}\ot {h\ps{1}}^{\pr{2}}\ns{0}\pr{2}\ns{-1}h\ps{2}\nsb{1}\pr{2}\ns{-1}{h\ps{3}}^{\pr{2}}\ns{-1}h\ps{4}\nsb{0}\Big) \ot \\
& \Big({h\ps{1}}^{\pr{2}}\ns{0}\pr{2}\ns{0} h\ps{2}\nsb{1}\pr{2}\ns{0}{h\ps{3}}^{\pr{2}}\ns{0}h\ps{4}\nsb{1}\ot  h\ps{5}\Big),
\end{align*}
while on the other hand 
\begin{align*}
& (\D_{\dcc}\ot \Id_{\G{M}\ot \C{H}} )\circ \D_{\dcc} (1\ot h) = \\
& \D_{\dcc}({h\ps{1}}^{\pr{1}}\ot {h\ps{1}}^{\pr{2}}\ns{-1} h\ps{2}\nsb{0} )\ot ({h\ps{1}}^{\pr{2}}\ns{0}h\ps{2}\nsb{1}\ot h\ps{3}) = \\
& \Big({h\ps{1}}^{\pr{1}}\pr{1}{({h\ps{1}}^{\pr{2}}\ns{-1} h\ps{2}\nsb{0})\ps{1}}^{\pr{1}}\ot   {h\ps{1}}^{\pr{1}}\pr{2}\ns{-1}{({h\ps{1}}^{\pr{2}}\ns{-1} h\ps{2}\nsb{0})\ps{1}}^{\pr{2}}\ns{-1} ({h\ps{1}}^{\pr{2}}\ns{-1} h\ps{2}\nsb{0})\ps{2}\nsb{0} \Big) \ot \\
& \Big({h\ps{1}}^{\pr{1}}\pr{2}\ns{0}{({h\ps{1}}^{\pr{2}}\ns{-1} h\ps{2}\nsb{0})\ps{1}}^{\pr{2}}\ns{0}({h\ps{1}}^{\pr{2}}\ns{-1} h\ps{2}\nsb{0})\ps{2}\nsb{1}\ot ({h\ps{1}}^{\pr{2}}\ns{-1} h\ps{2}\nsb{0})\ps{3}\Big) \ot \\
& \Big({h\ps{1}}^{\pr{2}}\ns{0}h\ps{2}\nsb{1}\ot h\ps{3}\Big),
\end{align*}
where, in view of \eqref{multiplicativity-of-maps}, 
\begin{align*}
& (\D_{\dcc}\ot \Id_{\G{M}\ot \C{H}} )\circ \D_{\dcc} (1\ot h) = \\
& \Big({h\ps{1}}^{\pr{1}}\pr{1} {{h\ps{1}}^{\pr{2}}\ns{-1}\ps{1}}^{\pr{1}} {h\ps{2}\nsb{0}\ps{1}}^{\pr{1}}\ot  {h\ps{1}}^{\pr{1}}\pr{2}\ns{-1}  {{h\ps{1}}^{\pr{2}}\ns{-1}\ps{1}}^{\pr{2}}\ns{-1} {h\ps{2}\nsb{0}\ps{1}}^{\pr{2}}\ns{-1}   {h\ps{1}}^{\pr{2}}\ns{-1}\ps{2}\nsb{0} h\ps{2}\nsb{0}\ps{2}\nsb{0} \Big) \ot \\
& \Big({h\ps{1}}^{\pr{1}}\pr{2}\ns{0}  {{h\ps{1}}^{\pr{2}}\ns{-1}\ps{1}}^{\pr{2}}\ns{0} {h\ps{2}\nsb{0}\ps{1}}^{\pr{2}}\ns{0}  {h\ps{1}}^{\pr{2}}\ns{-1}\ps{2}\nsb{1} h\ps{2}\nsb{0}\ps{2}\nsb{1} \ot    {h\ps{1}}^{\pr{2}}\ns{-1}\ps{3} h\ps{2}\nsb{0}\ps{3}\Big) \ot \\
&  \Big({h\ps{1}}^{\pr{2}}\ns{0}h\ps{2}\nsb{1}\ot h\ps{3}\Big) = \\
& \Big({h\ps{1}}^{\pr{1}}\pr{1} {{h\ps{1}}^{\pr{2}}\ns{-3}}^{\pr{1}} {h\ps{2}\nsb{0}\ps{1}}^{\pr{1}}\ot  {h\ps{1}}^{\pr{1}}\pr{2}\ns{-1}  {{h\ps{1}}^{\pr{2}}\ns{-3}}^{\pr{2}}\ns{-1} {h\ps{2}\nsb{0}\ps{1}}^{\pr{2}}\ns{-1}   {h\ps{1}}^{\pr{2}}\ns{-2}\nsb{0} h\ps{2}\nsb{0}\ps{2}\nsb{0} \Big) \ot \\
& \Big({h\ps{1}}^{\pr{1}}\pr{2}\ns{0}  {{h\ps{1}}^{\pr{2}}\ns{-3}}^{\pr{2}}\ns{0} {h\ps{2}\nsb{0}\ps{1}}^{\pr{2}}\ns{0}  {h\ps{1}}^{\pr{2}}\ns{-2}\nsb{1} h\ps{2}\nsb{0}\ps{2}\nsb{1} \ot    {h\ps{1}}^{\pr{2}}\ns{-1} h\ps{2}\nsb{0}\ps{3}\Big) \ot  \Big({h\ps{1}}^{\pr{2}}\ns{0}h\ps{2}\nsb{1}\ot h\ps{3}\Big).
\end{align*}
Accordingly, the application of \eqref{ve-ve-ve} to
\begin{equation}\label{comultp-h}
(\Id_{\G{M}\ot \C{H}} \ot \D_{\dcc})\circ \D_{\dcc} (1\ot h)  = (\D_{\dcc} \ot \Id_{\G{M}\ot \C{H}} )\circ \D_{\dcc} (1\ot h)
\end{equation}
yields \eqref{h-h-h-I}, while the application of \eqref{eta-ve-ve} produces \eqref{h-h-h-II}, and finally
\[
\eta \ot \Id_\C{H} \ot \eta \ot \Id_\C{H} \ot \Id_\G{M} \ot \ve :\G{M}\ot \C{H}\ot \G{M}\ot \C{H} \ot \G{M}\ot \C{H} \to \C{H}\ot \C{H}\ot \G{M}
\]
gives \eqref{h-h-h-III}. Conversely, \eqref{h-h-h-I}, \eqref{h-h-h-II}, and \eqref{h-h-h-III} together imply the coassociativity on $1\ot h \in \G{M}\ot \C{H}$. Indeed, using \eqref{h-h-h-III},
\begin{align*}
& (\D_{\dcc}\ot \Id_{\G{M}\ot \C{H}} )\circ \D_{\dcc} (1\ot h) = \\
& \Big({h\ps{1}}^{\pr{1}}\pr{1} {{h\ps{1}}^{\pr{2}}\ns{-3}}^{\pr{1}} {h\ps{2}\nsb{0}\ps{1}}^{\pr{1}}\ot  {h\ps{1}}^{\pr{1}}\pr{2}\ns{-1}  {{h\ps{1}}^{\pr{2}}\ns{-3}}^{\pr{2}}\ns{-1} {h\ps{2}\nsb{0}\ps{1}}^{\pr{2}}\ns{-1}   {h\ps{1}}^{\pr{2}}\ns{-2}\nsb{0} h\ps{2}\nsb{0}\ps{2}\nsb{0} \Big) \ot \\
& \Big({h\ps{1}}^{\pr{1}}\pr{2}\ns{0}  {{h\ps{1}}^{\pr{2}}\ns{-3}}^{\pr{2}}\ns{0} {h\ps{2}\nsb{0}\ps{1}}^{\pr{2}}\ns{0}  {h\ps{1}}^{\pr{2}}\ns{-2}\nsb{1} h\ps{2}\nsb{0}\ps{2}\nsb{1} \ot    {h\ps{1}}^{\pr{2}}\ns{-1} h\ps{2}\nsb{0}\ps{3}\Big) \ot  \Big({h\ps{1}}^{\pr{2}}\ns{0}h\ps{2}\nsb{1}\ot h\ps{3}\Big) = \\
& \Big({h\ps{1}}^{\pr{1}}\pr{1} {{h\ps{1}}^{\pr{2}}\ns{-3}}^{\pr{1}} {h\ps{2}\ps{1}\nsb{0}}^{\pr{1}}\ot  {h\ps{1}}^{\pr{1}}\pr{2}\ns{-1}  {{h\ps{1}}^{\pr{2}}\ns{-3}}^{\pr{2}}\ns{-1} {h\ps{2}\ps{1}\nsb{0}}^{\pr{2}}\ns{-1}   {h\ps{1}}^{\pr{2}}\ns{-2}\nsb{0} h\ps{2}\ps{1}\nsb{1}\ns{-2}\nsb{0}h\ps{2}\ps{2}\nsb{0}\nsb{0} \Big) \ot \\
& \Big({h\ps{1}}^{\pr{1}}\pr{2}\ns{0}  {{h\ps{1}}^{\pr{2}}\ns{-3}}^{\pr{2}}\ns{0} {h\ps{2}\ps{1}\nsb{0}}^{\pr{2}}\ns{0}  {h\ps{1}}^{\pr{2}}\ns{-2}\nsb{1} h\ps{2}\ps{1}\nsb{1}\ns{-2}\nsb{1} h\ps{2}\ps{2}\nsb{0}\nsb{1} \ot  \\
& \hspace{8cm}  {h\ps{1}}^{\pr{2}}\ns{-1} h\ps{2}\ps{1}\nsb{1}\ns{-1} h\ps{2}\ps{2}\nsb{1}\ns{-1} h\ps{2}\ps{3}\nsb{0}\Big) \ot \\
& \Big({h\ps{1}}^{\pr{2}}\ns{0} h\ps{2}\ps{1}\nsb{1}\ns{0} h\ps{2}\ps{2}\nsb{1}\ns{0} h\ps{2}\ps{3}\nsb{1}\ot h\ps{3}\Big),
\end{align*}
which, in view of the coassociativity of the comultiplication on $\C{H}$, as well as the (left) $\C{H}$-coaction on $\G{M}$, may be rewritten as
\begin{align*}
& (\D_{\dcc}\ot \Id_{\G{M}\ot \C{H}} )\circ \D_{\dcc} (1\ot h) = \\
& \Big({h\ps{1}}^{\pr{1}}\pr{1} {{h\ps{1}}^{\pr{2}}\ns{-1}}^{\pr{1}} {h\ps{2}\nsb{0}}^{\pr{1}}\ot  {h\ps{1}}^{\pr{1}}\pr{2}\ns{-1}  {{h\ps{1}}^{\pr{2}}\ns{-1}}^{\pr{2}}\ns{-1} {h\ps{2}\nsb{0}}^{\pr{2}}\ns{-1}   {h\ps{1}}^{\pr{2}}\ns{-2}\nsb{0} h\ps{2}\nsb{1}\ns{-2}\nsb{0}h\ps{3}\nsb{0}\nsb{0} \Big) \ot \\
& \Big({h\ps{1}}^{\pr{1}}\pr{2}\ns{0}  {{h\ps{1}}^{\pr{2}}\ns{-1}}^{\pr{2}}\ns{0} {h\ps{2}\nsb{0}}^{\pr{2}}\ns{0}  {h\ps{1}}^{\pr{2}}\ns{0}\ns{-2}\nsb{1} h\ps{2}\nsb{1}\ns{-2}\nsb{1} h\ps{3}\nsb{0}\nsb{1} \ot  \\
& \hspace{8cm}  {h\ps{1}}^{\pr{2}}\ns{0}\ns{-1} h\ps{2}\nsb{1}\ns{-1} h\ps{3}\nsb{1}\ns{-1} h\ps{4}\nsb{0}\Big) \ot \\
& \Big({h\ps{1}}^{\pr{2}}\ns{0}\ns{0} h\ps{2}\nsb{1}\ns{0} h\ps{3}\nsb{1}\ns{0} h\ps{4}\nsb{1}\ot h\ps{5}\Big).
\end{align*}
Now, employing this time \eqref{h-h-h-I}, we arrive at
\begin{align*}
& (\D_{\dcc}\ot \Id_{\G{M}\ot \C{H}} )\circ \D_{\dcc} (1\ot h) = \\
& \Big({h\ps{1}}^{\pr{1}}\ot  {h\ps{1}}^{\pr{2}}\pr{1}\ns{-1} {h\ps{2}}^{\pr{1}}\ns{-1}   {h\ps{1}}^{\pr{2}}\pr{2}\ns{-2}\nsb{0} {h\ps{2}}^{\pr{2}}\ns{-1}\nsb{0}h\ps{3}\nsb{0}\nsb{0} \Big) \ot \\
& \Big({h\ps{1}}^{\pr{2}}\pr{1}\ns{0}  {h\ps{2}}^{\pr{1}}\ns{0} {h\ps{1}}^{\pr{2}}\pr{2}\ns{-2}\nsb{1}  {h\ps{2}}^{\pr{2}}\ns{-1}\nsb{1} h\ps{3}\nsb{0}\nsb{1} \ot  \\
& \hspace{8cm}  {h\ps{1}}^{\pr{2}}\pr{2}\ns{-1} {h\ps{2}}^{\pr{2}}\ns{0}\ns{-1} h\ps{3}\nsb{1}\ns{-1} h\ps{4}\nsb{0}\Big) \ot \\
& \Big({h\ps{1}}^{\pr{2}}\pr{2}\ns{0} {h\ps{2}}^{\pr{2}}\ns{0}\ns{0} h\ps{3}\nsb{1}\ns{0} h\ps{4}\nsb{1}\ot h\ps{5}\Big).
\end{align*}
Next, we apply \eqref{x-h-switch} to get
\begin{align*}
& (\D_{\dcc}\ot \Id_{\G{M}\ot \C{H}} )\circ \D_{\dcc} (1\ot h) = \\
& \Big({h\ps{1}}^{\pr{1}}\ot  {h\ps{1}}^{\pr{2}}\pr{1}\ns{-1}  {h\ps{1}}^{\pr{2}}\pr{2}\ns{-2}\nsb{0} {h\ps{2}}^{\pr{1}}\ns{-1} {h\ps{2}}^{\pr{2}}\ns{-1}\nsb{0}h\ps{3}\nsb{0}\nsb{0} \Big) \ot \\
& \Big({h\ps{1}}^{\pr{2}}\pr{1}\ns{0}  {h\ps{1}}^{\pr{2}}\pr{2}\ns{-2}\nsb{1}  {h\ps{2}}^{\pr{1}}\ns{0}  {h\ps{2}}^{\pr{2}}\ns{-1}\nsb{1} h\ps{3}\nsb{0}\nsb{1} \ot  \\
& \hspace{8cm}  {h\ps{1}}^{\pr{2}}\pr{2}\ns{-1} {h\ps{2}}^{\pr{2}}\ns{0}\ns{-1} h\ps{3}\nsb{1}\ns{-1} h\ps{4}\nsb{0}\Big) \ot \\
& \Big({h\ps{1}}^{\pr{2}}\pr{2}\ns{0} {h\ps{2}}^{\pr{2}}\ns{0}\ns{0} h\ps{3}\nsb{1}\ns{0} h\ps{4}\nsb{1}\ot h\ps{5}\Big),
\end{align*}
and then \eqref{h-h-h-II}, together with the coassociativity of the left $\C{H}$-coaction on $\G{M}$, to obtain
\begin{align*}
& (\D_{\dcc}\ot \Id_{\G{M}\ot \C{H}} )\circ \D_{\dcc} (1\ot h) = \\
& \Big({h\ps{1}}^{\pr{1}}\ot  {h\ps{1}}^{\pr{2}}\pr{1}\ns{-1}  {h\ps{1}}^{\pr{2}}\pr{2}\ns{-1}\nsb{0} h\ps{2}\nsb{0} \Big) \ot \\
& \Big({h\ps{1}}^{\pr{2}}\pr{1}\ns{0}  {h\ps{1}}^{\pr{2}}\pr{2}\ns{-1}\nsb{1}  h\ps{2}\nsb{1}\pr{1}{h\ps{3}}^{\pr{1}} \ot  {h\ps{1}}^{\pr{2}}\pr{2}\ns{0}\ns{-1} h\ps{2}\nsb{1}\pr{2}\ns{-1}{h\ps{3}}^{\pr{2}}\ns{-1} h\ps{4}\nsb{0}\Big) \ot \\
& \Big({h\ps{1}}^{\pr{2}}\pr{2}\ns{0}\ns{0} h\ps{2}\nsb{1}\pr{2}\ns{0} {h\ps{3}}^{\pr{2}}\ns{0} h\ps{4}\nsb{1}\ot h\ps{5}\Big).
\end{align*}
Finally, we apply \eqref{left-comod-coalg} to see \eqref{comultp-h}.
\end{proof}

We shall use the notation $\G{M} \dcc^\s \C{H} $ for the (right) \emph{cocycle double cross coproduct} bialgebra of Proposition \ref{prop-cocycle-double-cross-coprod}. A (left) cocycle double cross coproduct construction $\G{M} {\,}^{\lambda\hspace{-0.15cm}}\dcc \C{H} $ may be pursued similarly.

Once again, a few remarks are in order; before we proceed into the decomposition point of view.

\begin{remark}
If the left coaction \eqref{left-coaction-Hopf-new} is trivial, then the coalgebra $\G{M} \dcc^\s \C{H} $ in Proposition \ref{prop-cocycle-double-cross-coprod} is nothing but $\G{M} \cl^\s \C{H} $, that is, the right-handed version of $\G{M} {\,}^{\lambda\hspace{-0.15cm}}\ccr \C{H} $ in \cite[Prop. 6.3.8]{Majid-book}. If, moreover, \eqref{right-coaction-Hopf-new} is a right coaction, and the comultiplication on $\C{H}$ is trivial, then \eqref{h-h-h-I} indicates that \eqref{sigma-map-Hopf} determines a 2-cocycle in the coalgebra Hochschild cohomology of $\G{M}$ with coefficients in the $\G{M}$-bicomodule $\C{H}$. In short,
\[
\s(h)\in H^2(\G{M},\C{H}),
\]
for any $h\in \C{H}$.
\end{remark}

\begin{remark}
The cocycle double cross coproduct construction does fit into (the right-handed version of) the one given in \cite[Appendix]{Brze97-II}; cross product by an algebra, associated to the dual entwining data $(\C{H},\G{M},\rho,\eta,\rho^\G{M})$ given by
\[
\rho:\G{M}\ot \C{H}\to \C{H}\ot \G{M}, \qquad \rho(x\ot h) := x\ns{-1}h \ot x\ns{0},
\]
and
\[
\rho^\G{M}:\G{M}\ot \G{M} \to \G{M}\ot \G{M}, \qquad \rho^\G{M}(x\ot x') := x\pr{1}x' \ot x\pr{2}.
\]
\end{remark}

Now comes an analogue of Proposition \ref{prop-universal-I}.

\begin{proposition}\label{prop-universal-cross-coprod}
Given an algebra $\G{M}$ and two bialgebras $\C{L},\C{H}$ that fit into 
\[
\xymatrix{
\G{M}  &\ar@{->>}[l]_q \C{L}  \ar@{->>}[r]^p_{\rm coalg} &  \C{H}  
}
\]
in the category of algebras, if  
\begin{equation}\label{iso-L-M-H}
(q\ot p)\circ\D_\C{L}:\C{L}\to\G{M}\ot \C{H}
\end{equation}
is an isomorphism (of algebras), where $\D_\C{L}:\C{L}\to \C{L}\ot \C{L}$ stands for the comultiplication in $\C{L}$, then $\C{L}\cong \G{M} \dcc^\s \C{H}$ as bialgebras. In this case, the maps \eqref{left-coaction-Hopf-new}, \eqref{right-coaction-Hopf-new}, \eqref{delta-map-Hopf}, \eqref{sigma-map-Hopf}  are obtained by
\[
\D_\C{L}(x) = x\pr{1} x\pr{2}\ns{-1}\ot x\pr{2}\ns{0}, \qquad \D_\C{L}(h) = {h\ps{1}}^{\pr{1}} {h\ps{1}}^{\pr{2}}\ns{-1}h\ps{2}\nsb{0}\ot {h\ps{1}}^{\pr{2}}\ns{0}h\ps{2}\nsb{1} h\ps{3}.
\]
\end{proposition}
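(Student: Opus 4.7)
The plan is to transport the bialgebra structure of $\C{L}$ to $\G{M}\ot\C{H}$ along the given isomorphism $\Phi := (q\ot p)\circ\D_\C{L}$ and identify the result with the cocycle double cross coproduct of Proposition \ref{prop-cocycle-double-cross-coprod}. The first observation is that $\Phi$ is itself an algebra homomorphism, as a composition of the algebra maps $\D_\C{L}$ (since $\C{L}$ is a bialgebra) and $q\ot p$ (since $q,p$ are algebra maps), so $\Phi$ is an algebra iso. This lets me introduce algebra embeddings $\iota_\G{M}(x) := \Phi^{-1}(x\ot 1)$ and $\iota_\C{H}(h) := \Phi^{-1}(1\ot h)$, noting that $\iota_\G{M}(x)\iota_\C{H}(h) = \Phi^{-1}(x\ot h)$ since $\Phi^{-1}$ is an algebra map.

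Next I would extract the counit data. From $\Phi = (q\ot p)\D_\C{L}$ together with $\ve_\C{H}\circ p = \ve_\C{L}$ (as $p$ is a coalgebra map), applying $\Id_\G{M}\ot\ve_\C{H}$ yields $q = (\Id_\G{M}\ot\ve_\C{H})\circ\Phi$, which forces $q\circ\iota_\G{M} = \Id_\G{M}$ and $q\circ\iota_\C{H} = \ve_\C{H}(\cdot)1_\G{M}$. I would then define the character $\eta := \ve_\C{L}\circ\iota_\G{M}:\G{M}\to k$; being a composition of two algebra maps, $\eta$ is an algebra map. The identity $\eta\circ q = \ve_\C{L}$ then follows from the algebra decomposition $\C{L} = \iota_\G{M}(\G{M})\cdot\iota_\C{H}(\C{H})$, yielding in turn $p = (\eta\ot\Id_\C{H})\circ\Phi$, whence $p\circ\iota_\C{H} = \Id_\C{H}$ and $p\circ\iota_\G{M} = \eta(\cdot)1_\C{H}$. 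In particular, the transported counit coincides with $\ve_\dcc(x\ot h) = \eta(x)\ve_\C{H}(h)$.

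The core of the argument is the extraction of the four structure maps $\delta,\nb,\sigma,\Db$ so as to recover the formulas displayed in the statement. The key observation that rigidifies the formula for $\D_\C{L}(x)$ is that, since $p$ is a coalgebra map,
\[
(p\ot p)\D_\C{L}(\iota_\G{M}(x)) = \D_\C{H}(p\iota_\G{M}(x)) = \eta(x)(1\ot 1),
\]
which together with the algebra decomposition $\C{L}\cong\iota_\G{M}(\G{M})\cdot\iota_\C{H}(\C{H})$ forces $\D_\C{L}(\iota_\G{M}(x))$ to lie in $\iota_\G{M}(\G{M})\cdot\iota_\C{H}(\C{H})\ot\iota_\G{M}(\G{M})$. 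Defining $\delta(x) := (q\ot q)\D_\C{L}(\iota_\G{M}(x))\in\G{M}\ot\G{M}$ and $\nb(y)$ by the $\C{H}\ot\G{M}$-component of $\D_\C{L}(\iota_\G{M}(y))$ read off via $(p\ot q)$, the formula $\D_\C{L}(x) = x\pr{1}x\pr{2}\ns{-1}\ot x\pr{2}\ns{0}$ becomes a tautology, with the coassociativity of $\D_\C{L}$ separating $\delta$ from $\nb$. A parallel computation starting from $(p\ot p)\D_\C{L}(\iota_\C{H}(h)) = \D_\C{H}(h)$, whose nontrivial output is responsible for the factors $h\ps{1},h\ps{2},h\ps{3}$ in the stated formula, produces $\sigma$ and $\Db$ and the expression for $\D_\C{L}(h)$.

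Finally, the normalisation axioms \eqref{nb-map-coaction-Hopf}--\eqref{sigma-map-Hopf-counital} follow by applying $\ve_\C{L}$ to the appropriate tensor factor of $\D_\C{L}(\iota_\G{M}(x))$ and $\D_\C{L}(\iota_\C{H}(h))$, while the multiplicativity \eqref{multiplicativity-of-maps} is inherited from the multiplicativity of $\D_\C{L}$ and the algebra-iso property of $\Phi$. Proposition \ref{prop-cocycle-double-cross-coprod} then identifies the transported bialgebra with $\G{M}\dcc^\s\C{H}$, yielding $\C{L}\cong\G{M}\dcc^\s\C{H}$. I expect the main obstacle to be precisely this last disentanglement step: the displayed formulas present the four maps only implicitly, so verifying that each of $\delta,\nb,\sigma,\Db$ is well-defined and that the two rigidifying constraints $(p\ot p)\D_\C{L}\iota_\G{M}(x) = \eta(x)(1\ot 1)$ and $(p\ot p)\D_\C{L}\iota_\C{H}(h) = \D_\C{H}(h)$ together with $\C{L}$-coassociativity truly pin down the stated decompositions will require patient bookkeeping with the projections $q = \Id_\G{M}\ot\ve_\C{H}$ and $p = \eta\ot\Id_\C{H}$ applied to iterated comultiplications.
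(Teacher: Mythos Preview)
Your approach is essentially the same as the paper's. The paper packages the extraction slightly differently by defining two algebra maps $F,G:\G{M}\ot\C{H}\to\C{H}\ot\G{M}$ (resp.\ $\G{M}\ot\G{M}$) via $F(q(\ell\ps{1})\ot p(\ell\ps{2})):=p(\ell\ps{1})\ot q(\ell\ps{2})$ and $G(q(\ell\ps{1})\ot p(\ell\ps{2})):=q(\ell\ps{1})\ot q(\ell\ps{2})$, then setting $\nb(x)=F(x\ot1)$, $\Db(h)=F(1\ot h)$, $\d(x)=G(x\ot1)$, $\s(h)=G(1\ot h)$; but since $F=(p\ot q)\D_\C{L}\Phi^{-1}$ and $G=(q\ot q)\D_\C{L}\Phi^{-1}$, this is exactly your $(p\ot q)\D_\C{L}\iota_\G{M}$ and $(q\ot q)\D_\C{L}\iota_\G{M}$, etc. The paper then verifies the transferred comultiplication agrees with \eqref{comult-cocycly-cross-coprod-comultp} by direct computation on $x\ot1$ and $1\ot h$, using the coalgebra property of $p$ just as you do. Your explicit construction of the character $\eta=\ve_\C{L}\circ\iota_\G{M}$ and the identities $q=(\Id\ot\ve_\C{H})\Phi$, $p=(\eta\ot\Id_\C{H})\Phi$ make one point more transparent than the paper's write-up, but the substance is identical.
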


\begin{proof}
Dual to the proof of Proposition \ref{prop-universal-I}, if \eqref{iso-L-M-H} is an algebra isomorphism, then  there are two algebra morphisms 
\begin{equation}\label{F-map-alg}
F: \G{M}\ot \C{H} \to \C{H}\ot \G{M}
\end{equation}
given by
\[
F(q(\ell\ps{1})\ot p(\ell\ps{2})) = p(\ell\ps{1})\ot q(\ell\ps{2}),
\]
and  
\begin{equation}\label{G-map-alg}
G:\G{M}\ot \C{H} \to \G{M}\ot \G{M}
\end{equation}
given by
\[
G(q(\ell\ps{1})\ot p(\ell\ps{2})) = q(\ell\ps{1})\ot q(\ell\ps{2}),
\]
for any $\ell\in\C{L}$. Accordingly, we define 
\begin{align}\label{maps}
\begin{split}
& \nb:\G{M}\to \C{H}\ot \G{M}, \qquad \nb(x)=x\ns{-1}\ot x\ns{0}:= F(x\ot 1), \\
& \Db:\C{H} \to\C{H}\ot \G{M}, \qquad \Db(h)=h\nsb{0}\ot h\nsb{1}:= F(1\ot h), \\
& \d:\G{M}\to \G{M}\ot \G{M}, \qquad \d(x)=x\pr{1}\ot x\pr{2}:= G(x\ot 1), \\
& \s:\C{H} \to \G{M}\ot \G{M}, \qquad \s(h)=h^{\pr{1}}\ot h^{\pr{2}}:= G(1\ot h),
\end{split}
\end{align}
and hence we can see that
\[
F(x\ot h) = F((x\ot 1)(1\ot h)) = F(x\ot 1)F(1\ot h) = x\ns{-1}h\nsb{0} \ot x\ns{0}h\nsb{1},
\]
and that
\[
G(x\ot h) = G((x\ot 1)(1\ot h)) = G(x\ot 1)G(1\ot h) = x\pr{1}h^{\pr{1}} \ot x\pr{2}h^{\pr{2}}.
\]
Let us next consider the bialgebra structure on $\G{M}\ot\C{H}$ imported by the isomorphism \eqref{iso-L-M-H}. 

Since the comultiplication on $\C{L}$, and the projections $q:\C{L}\to\G{M}$ and $p:\C{L}\to\C{H}$ are morphisms of algebras, then \eqref{iso-L-M-H} is clearly an algebra isomorphism, with the tensor product algebra structure on $\G{M}\ot \C{H}$. 

Next, the comultiplication on $\G{M}\ot \C{H}$, imported from $\C{L}$ via \eqref{iso-L-M-H}, is given by
\begin{align*}
& \D_{\G{M}\ot \C{H}}(q(\ell\ps{1})\ot p(\ell\ps{2})) = \Big(q(\ell\ps{1}\ps{1})\ot p(\ell\ps{1}\ps{2})\Big) \ot \Big(q(\ell\ps{2}\ps{1})\ot p(\ell\ps{2}\ps{2})\Big) = \\
&\Big(q(\ell\ps{1})\ot p(\ell\ps{2})\Big) \ot \Big(q(\ell\ps{3})\ot p(\ell\ps{4})\Big),
\end{align*}
for any $x\ot h =q(\ell\ps{1})\ot p(\ell\ps{2}) \in \G{M}\ot \C{H}$, where $\ell \in \C{L}$. In particular, for
\[
x\ot 1 = q(\ell\ps{1}) \ot p(\ell\ps{2}) = q(\ell\ps{1}) \ve( p(\ell\ps{2}) ) \ot 1 = q(\ell\ps{1}) \ve(\ell\ps{2}) \ot 1 = q(\ell)  \ot 1\in \G{M}\ot \C{H},
\]
where the third equality is a result of the fact that $p:\C{L}\to \C{H}$ is a coalgebra map, we have
\begin{align*}
& x\pr{1} \ot x\pr{2} \ot 1 = G(x\ot 1) \ot 1 = G(q(\ell\ps{1}) \ot 1) \ot p(\ell\ps{2}) = G(q(\ell\ps{1}\ps{1}) \ot p(\ell\ps{1}\ps{2})) \ot p(\ell\ps{2}) =\\
&  q(\ell\ps{1}\ps{1}) \ot q(\ell\ps{1}\ps{2}) \ot p(\ell\ps{2}) = q(\ell\ps{1}) \ot q(\ell\ps{2}) \ot p(\ell\ps{3})
\end{align*}
and hence
\begin{align*}
& x\pr{1} \ot x\pr{2}\ns{-1}\ot x\pr{2}\ns{0} \ot 1 = q(\ell\ps{1})\ot F(q(\ell\ps{2})\ot 1) \ot p(\ell\ps{3}) = \\
& q(\ell\ps{1})\ot F(q(\ell\ps{2}\ps{1})\ot p(\ell\ps{2}\ps{2})) \ot p(\ell\ps{3})= q(\ell\ps{1})\ot p(\ell\ps{2}\ps{1})\ot q(\ell\ps{2}\ps{2}) \ot p(\ell\ps{3}) = \\
& q(\ell\ps{1})\ot p(\ell\ps{2})\ot q(\ell\ps{3}) \ot p(\ell\ps{4}).
\end{align*}
That is,
\[
\D_{\G{M}\ot \C{H}}(x\ot 1) = \Big(x\pr{1}\ot x\pr{2}\ns{-1}\Big)\ot \Big(x\pr{2}\ns{0} \ot 1\Big).
\]
Along the same lines, for $1\ot h = q(\ell\ps{1})\ot p(\ell\ps{2}) \in \G{M}\ot \C{H}$, using once again the fact that $p:\C{L}\to \C{H}$ is a coalgebra map,
\[
1 \ot h\ps{1}\ot h\ps{2}\ot h\ps{3} = q(\ell\ps{1})\ot p(\ell\ps{2})\ot p(\ell\ps{3})\ot p(\ell\ps{4}),
\] 
and hence
\begin{align*}
& {h\ps{1}}^{\pr{1}}\ot {h\ps{1}}^{\pr{2}}\ot h\ps{2}\ot h\ps{3} = G(q(\ell\ps{1})\ot p(\ell\ps{2}))\ot p(\ell\ps{3})\ot p(\ell\ps{4}) = \\
& G(q(\ell\ps{1}\ps{1})\ot p(\ell\ps{1}\ps{2}))\ot p(\ell\ps{2})\ot p(\ell\ps{3}) = q(\ell\ps{1}\ps{1})\ot q(\ell\ps{1}\ps{2})\ot p(\ell\ps{2})\ot p(\ell\ps{3}) = \\
& q(\ell\ps{1})\ot q(\ell\ps{2})\ot p(\ell\ps{3})\ot p(\ell\ps{4}).
\end{align*}
Accordingly,
\begin{align*}
& {h\ps{1}}^{\pr{1}}\ot {h\ps{1}}^{\pr{2}}\ns{-1}h\ps{2}\nsb{0}\ot {h\ps{1}}^{\pr{2}}\ns{0}h\ps{2}\nsb{1}\ot h\ps{3} = q(\ell\ps{1})\ot F(q(\ell\ps{2})\ot p(\ell\ps{3}))\ot p(\ell\ps{4}) = \\
& q(\ell\ps{1})\ot F(q(\ell\ps{2}\ps{1})\ot p(\ell\ps{2}\ps{2}))\ot p(\ell\ps{3}) = q(\ell\ps{1})\ot p(\ell\ps{2}\ps{1})\ot q(\ell\ps{2}\ps{2})\ot p(\ell\ps{3}) = \\
& q(\ell\ps{1})\ot p(\ell\ps{2})\ot q(\ell\ps{3})\ot p(\ell\ps{4}).
\end{align*}
In other words,
\[
\D_{\G{M}\ot \C{H}}(1\ot h) = \Big({h\ps{1}}^{\pr{1}}\ot {h\ps{1}}^{\pr{2}}\ns{-1}h\ps{2}\nsb{0}\Big)\ot \Big({h\ps{1}}^{\pr{2}}\ns{0}h\ps{2}\nsb{1}\ot h\ps{3}\Big).
\]
By the multiplicativity of the maps, we have thus proved that the comultiplication on $\G{M}\ot \C{H}$ transferred from $\C{L}$ coincides with that of \eqref{comult-cocycly-cross-coprod-comultp}.
\end{proof}

\subsection{Bicocycle Double Cross Product Bialgebras}
\label{subsect-Bicocycle-double-cross-coproduct-bialgebra}~

Given two coalgebras $(\G{M},e)$ and $(\C{H},1)$, with distinguished group-likes $e \in\G{M}$ and $1\in \C{H}$, in the present subsection we shall present the construction of a bialgebra $\G{M} {\,}_{\g\hspace{-0.1cm}}\bowtie_\t \C{H} :=\G{M}\ot \C{H}$ that fits into the picture
\[
\xymatrix{
\G{M} \ar@{^{(}->}[r]_{i\,\,\,\,\,\,\,\,\,\,\,\,\,\,\,}  & \G{M} {\,}_{\g\hspace{-0.1cm}}\bowtie_\t \C{H}    &  \ar@{_{(}->}[l]^{\,\,\,\,\,\,\,\,\,\,\,\,\,\,\, j}\C{H},
}
\]
in the category of coalgebras. In other words, both $\G{M}$ and $\C{H}$ are accommodated merely as subcoalgebras in $\G{M} {\,}_{\g\hspace{-0.1cm}}\bowtie_\t \C{H}$.

Accordingly, in addition to 
\begin{align}
& \vp:\C{H}\ot \G{M} \to \G{M}, \quad h\ot x\mapsto \vp(h,x), \label{left-action-Hopf-2} \\
& \psi:\C{H}\ot \G{M} \to \C{H}, \quad h\ot x\mapsto \psi(h,x), \label{right-action-Hopf-2}  \\
& \phi:\G{M}\ot \G{M} \to \G{M}, \quad x\ot x'\mapsto \phi(x,x')=:x\cdot x', \label{phi-map-Hopf-2} \\
& \t:\G{M}\ot \G{M} \to \C{H}, \quad x\ot x'\mapsto \t(x,x'), \label{theta-map-Hopf-2}
\end{align}
we shall need two more (coalgebra) maps
\begin{equation}\label{mu-map-Hopf}
\mu:\C{H}\ot \C{H}\to \C{H}, \qquad h\ot h' \mapsto \mu(h, h') =: h\ast h'
\end{equation}
and 
\begin{equation}\label{gamma-map-Hopf}
\g:\C{H}\ot \C{H}\to \G{M}, \qquad h\ot h' \mapsto \g(h, h')=:\g(h\ot h').
\end{equation}
More precisely, we shall begin with the following data.

\begin{definition}
Let $(\G{M},e)$ and $(\C{H},1)$ be two coalgebras with distinguished group-likes $e \in\G{M}$, and $1\in \C{H}$. Then the pair $(\G{M},\C{H})$, equipped with the coalgebra maps \eqref{left-action-Hopf-2}, \eqref{right-action-Hopf-2}, \eqref{phi-map-Hopf-2}, \eqref{theta-map-Hopf-2}, \eqref{mu-map-Hopf}, and \eqref{gamma-map-Hopf} that satisfy  
\begin{align}\label{basics}
\begin{split}
& \vp(1,x) = x, \qquad \psi(h,e)=h,\qquad  \vp(h,e) = \ve(h)e, \qquad \psi(1,x)=\ve(x)1,\\
&\t(e,e) = 1, \qquad\mu(1,1)=1, \qquad \phi(e,e)=e, \qquad \g(1,1) = e
\end{split}
\end{align}
is called a \emph{bicocycle double cross product pair}.
\end{definition}

\begin{proposition}\label{prop-bicocycle-double-cross-prod}
Let $(\G{M}, \C{H})$ be a bicocycle double cross product pair. Then, $\C{G}:=\G{M}\ot \C{H}$ is a bialgebra through
\begin{equation}\label{mult-bicocycly-cross-prod-multp}
(x\ot h)(x'\ot h') := x\ps{1}\cdot [\vp(h\ps{1}, x'\ps{1})\cdot\g(\psi(h\ps{2},x'\ps{2}),h'\ps{1})] \ot [\t(x\ps{2},\vp(h\ps{3}, x'\ps{3}))\ast\psi(h\ps{4},x'\ps{4})]\ast h'\ps{2},
\end{equation}
and
\begin{equation}\label{comultp-cross-prod-II}
\D(x\ot h) := (x\ps{1}\ot h\ps{1}) \ot (x\ps{2}\ot h\ps{2}),
\end{equation}
with the unit $e\ot 1 \in \G{M}\ot \C{H}$, and the counit given by $\ve(x\ot h):= \ve(x)\ve(h)$, if and only if \eqref{left-action-Hopf}, \eqref{right-action-Hopf}, \eqref{phi-map-Hopf}, \eqref{theta-map-Hopf}, \eqref{mu-map-Hopf}, \eqref{gamma-map-Hopf}  are subject to
\begin{align}
& e\cdot x = x = x\cdot e, \qquad 1\ast h = h = h \ast 1 \label{phi-mu-map-Hopf-II} \\
& \t(x,e) = \ve(x)1 = \t(e,x), \qquad \g(h,1) = \ve(h)e = \g(1,h) \label{theta-gamma-map-Hopf-II} \\
& \vp(h\ps{1}, x'\ps{1}\cdot x''\ps{1})\cdot \g(\psi(h\ps{2},x'\ps{2}\cdot x''\ps{2}),\t(x\ps{3},x''\ps{3})\ast h'') = \label{left-action-Hopf-on-multp-III} \\
& \hspace{3cm} \vp(h\ps{1},x'\ps{1})\cdot[\vp(\psi(h\ps{2},x'\ps{2}),x''\ps{1})\cdot \g(\psi(\psi(h\ps{3},x'\ps{3}),x''\ps{2}),h'')] , \notag\\
& \psi(h,x'\ps{1}\cdot x''\ps{1}) \ast[\t(x'\ps{2}, x''\ps{2})\ast h''] = \label{right-action-theta-comp-Hopf-III} \\
& \hspace{3cm}\t(\vp(h\ps{1}, x'\ps{1}), [\vp(\psi(h\ps{2}, x'\ps{2}), x''\ps{1}))\ast\psi(\psi(h\ps{3},x'\ps{3}), x''\ps{2})]\ast h'', \notag\\
& \t(x\cdot\g(h\ps{1},h'\ps{1}), \vp(h\ps{2}\ast h'\ps{2}, x''\ps{1}))\ast\psi(h\ps{3}\ast h'\ps{3}, x''\ps{2}) = \label{right-action-Hopf-on-multp-III} \\
& \hspace{3cm} [\t(x,\vp(h\ps{1},\vp(h'\ps{1},x''\ps{1})))\ast \psi(h\ps{2},\vp(h'\ps{2},x''\ps{2}))]\ast \psi(h'\ps{3},x''\ps{3}),  \notag \\
& [x\cdot\g(h\ps{1},h'\ps{1})]\cdot\vp(h\ps{2}\ast h'\ps{2}, x'') = x\cdot [\vp(h\ps{1},\vp(h'\ps{1}, x''\ps{1}))\cdot \g(\psi(h\ps{2},\vp(h'\ps{2},x''\ps{2})), \psi(h'\ps{3},x''\ps{3}))], \label{M-is-left-H-module-III} \\
& x\cdot (x'\cdot x'') = (x\ps{1}\cdot x'\ps{1})\cdot [\vp(\t(x\ps{2}, x'\ps{2}), x''\ps{1})\cdot \g(\psi(\t(x\ps{3}, x'\ps{3}), x''\ps{2}), h'')], \label{phi-map-Hopf-assoc-III} \\
& \t(x, x'\ps{1}\cdot x''\ps{1})\ast [\t(x'\ps{2}, x''\ps{2})\ast h''] = [\t(x\ps{1}\cdot x'\ps{1}, \vp(\t(x\ps{2}, x'\ps{2}), x''\ps{1}))\ast \psi(\t(x\ps{3}, x'\ps{3}), x''\ps{2})] \ast h'',  \label{theta-map-Hopf-cocycle-III} \\
& [x\cdot \g(h\ps{1},h'\ps{1})] \cdot \g(h\ps{2}\ast h'\ps{2},h'') = x\cdot [\vp(h\ps{1}, \g(h'\ps{1}, h''\ps{1})) \cdot \g(\psi(h\ps{2},\g(h'\ps{2}, h''\ps{2})), h'\ps{3}\ast h''\ps{3})] , \label{gamma-map-Hopf-cocycle-III}\\
& (h\ast h') \ast h'' = [\t(x,\vp(h\ps{1},\g(h'\ps{1}, h''\ps{1})))\ast \psi(h\ps{2},\g(h'\ps{2}, h''\ps{2}))]\ast (h'\ps{3}\ast h''\ps{3}), \label{gamma-map-Hopf-assoc-III}\\
& \psi(h\ps{2},x\ps{2}) \ot \vp(h\ps{1}, x\ps{1}) = \psi(h\ps{1},x\ps{1}) \ot \vp(h\ps{2}, x\ps{2}), \label{left-right-action-comultp-II} \\
& \t(x\ps{2},x'\ps{2}) \ot x\ps{1}\cdot x'\ps{1} =  \t(x\ps{1},x'\ps{1}) \ot x\ps{2}\cdot x'\ps{2},\label{theta-phi-comp-II} \\
& h\ps{2}\ast h'\ps{2} \ot \g(h\ps{1},h'\ps{1}) = h\ps{1}\ast h'\ps{1} \ot \g(h\ps{2},h'\ps{2}), \label{gamma-mu-comp-II}
\end{align}
for any $x,x',x'' \in \G{M}$, and any $h,h',h'' \in \C{H}$.
\end{proposition}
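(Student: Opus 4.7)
The plan is to mirror the proof of Proposition \ref{prop-bicocycle-double-cross-prod-gr}, upgraded to the bialgebra setting by tracking Sweedler indices throughout and adding one further block to handle bialgebra compatibility. I would split the verification into three logically independent parts: (i) the unit axiom, (ii) associativity of \eqref{mult-bicocycly-cross-prod-multp}, and (iii) multiplicativity of the comultiplication \eqref{comultp-cross-prod-II}. For (i), since $e\in\G{M}$ and $1\in\C{H}$ are grouplike, the identities $(e\otimes 1)(x\otimes h)=(x\otimes h)=(x\otimes h)(e\otimes 1)$ collapse, under the normalizations \eqref{basics}, to exactly \eqref{phi-mu-map-Hopf-II} and \eqref{theta-gamma-map-Hopf-II}; this reduces to pure bookkeeping, entirely parallel to the unit verification in the Lie-group proof.

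For (ii), I would exploit the identity $(x\otimes h)=(x\otimes 1)(e\otimes h)$, which is immediate from \eqref{mult-bicocycly-cross-prod-multp} together with \eqref{basics}, to reduce associativity on general elements to the four ``mixed'' triples
\[
(x\otimes 1,\,x'\otimes 1,\,x''\otimes h''),\;\; (e\otimes h,\,x'\otimes 1,\,x''\otimes h''),\;\; (x\otimes h,\,e\otimes h',\,x''\otimes 1),\;\; (x\otimes h,\,e\otimes h',\,e\otimes h''),
\]
in complete analogy with the four triples treated in the proof of Proposition \ref{prop-bicocycle-double-cross-prod-gr}. Expanding both sides of $(AB)C=A(BC)$ by \eqref{mult-bicocycly-cross-prod-multp} and matching the $\G{M}$- and $\C{H}$-components separately would yield, respectively, the pairs $\{$\eqref{phi-map-Hopf-assoc-III}, \eqref{theta-map-Hopf-cocycle-III}$\}$, $\{$\eqref{left-action-Hopf-on-multp-III}, \eqref{right-action-theta-comp-Hopf-III}$\}$, $\{$\eqref{right-action-Hopf-on-multp-III}, \eqref{M-is-left-H-module-III}$\}$, and $\{$\eqref{gamma-map-Hopf-cocycle-III}, \eqref{gamma-map-Hopf-assoc-III}$\}$.

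For (iii), since \eqref{comultp-cross-prod-II} is the tensor-product coalgebra structure and $e\otimes 1$ is grouplike, counitality is automatic; only multiplicativity of $\ve$ and $\D$ remains. The former follows at once from \eqref{basics}. For the latter, I would expand $\D\bigl((x\otimes h)(x'\otimes h')\bigr)$ using \eqref{mult-bicocycly-cross-prod-multp} and \eqref{comultp-cross-prod-II}, invoke the built-in coalgebra-homomorphism property of each of $\vp,\psi,\phi,\t,\mu,\g$, and verify that matching with $\D(x\otimes h)\D(x'\otimes h')$ holds if and only if the three ``switch'' identities \eqref{left-right-action-comultp-II}, \eqref{theta-phi-comp-II}, and \eqref{gamma-mu-comp-II} are satisfied. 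Necessity of each of these three would be extracted by applying suitable tensorings of $\Id$ and $\ve$ to the equation evaluated on the generators $x\otimes 1$, $e\otimes h$, $x'\otimes 1$, and $e\otimes h'$ in turn.

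The hard part, as already in the classical double cross product \cite[Thm.~7.2.2]{Majid-book}, is entirely clerical: the multiplication \eqref{mult-bicocycly-cross-prod-multp} involves four tiers of comultiplication of $h$ and $x'$, so expanding $(AB)C$ versus $A(BC)$ produces deeply nested Sweedler strings, and the bialgebra compatibility step adds a further layer of $\D$ on top. The organizational device that keeps this tractable is the systematic reduction to the four ``pure'' triples listed above via $(x\otimes h)=(x\otimes 1)(e\otimes h)$; once this is carried out, each of the resulting equations becomes the Sweedler-dressed analogue of an equation already derived in the proof of Proposition \ref{prop-bicocycle-double-cross-prod-gr}, and componentwise matching in $\G{M}$ and $\C{H}$ then delivers precisely the claimed list of necessary and sufficient conditions.
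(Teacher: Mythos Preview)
Your proposal is correct and follows essentially the same approach as the paper: the same split into unit, associativity, and multiplicativity blocks; the same four mixed triples for associativity, yielding the same pairs of conditions; and the same extraction of the three switch identities \eqref{left-right-action-comultp-II}--\eqref{gamma-mu-comp-II} via counits applied to the generator cases. The only microscopic discrepancy is that the factorization $(x\otimes h)=(x\otimes 1)(e\otimes h)$ actually needs \eqref{phi-mu-map-Hopf-II}--\eqref{theta-gamma-map-Hopf-II} rather than just \eqref{basics}, but since you establish the unit conditions first this is harmless.
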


\begin{proof}
We shall begin with the unitality, that is, $e\ot 1 \in \G{M}\ot \C{H}$ being the unit of the multiplication \eqref{mult-bicocycly-cross-prod-multp}, and then we shall proceed to the associativity. We observe at once that
\[
(e\ot 1)(x\ot 1) = e \cdot (x\ps{1} \cdot e)\ot (\t(e,x\ps{2}) \ast 1) \ast 1,
\]
and that
\[
(x\ot 1)(e\ot 1) = x\ps{1} \cdot e^2\ot (\t(x\ps{2},e) \ast 1) \ast 1.
\]
Accordingly, 
\[
(e\ot 1)(x\ot 1) = (x\ot 1) = (x\ot 1)(e\ot 1)
\]
if and only if 
\[
e\cdot (x \cdot e) = x = x \cdot e^2
\]
and
\[
(\t(x,e) \ast 1) \ast 1 = \ve(x)1 = (\t(e,x) \ast 1) \ast 1.
\]
On the other hand, 
\[
(e\ot h)(e\ot 1) =  e \cdot (e \cdot \g(h\ps{1},1))\ot (1 \ast h\ps{2}) \ast 1 
\]
while
\[
(e\ot 1)(e\ot h) = e \cdot (e \cdot \g(1,h\ps{1})) \ot 1^2 \ast h\ps{2}.
\]
Thus, 
\[
(e\ot h)(e\ot 1) = e \ot h = (e\ot 1)(e\ot h)
\]
if and only if
\[
1^2\ast h = h = (1 \ast h) \ast 1,
\]
and
\[
e \cdot (e \cdot \g(1,h)) = \ve(h)e = e \cdot (e \cdot \g(h,1)).
\]
As a result, in view of \eqref{basics}, $e\ot 1 \in \G{M}\ot\C{H}$ is the identity element of the multiplication \eqref{mult-bicocycly-cross-prod-multp} if and only if \eqref{phi-mu-map-Hopf-II} and \eqref{theta-gamma-map-Hopf-II} hold.

We now come to the (mixed) associativity conditions for \eqref{mult-bicocycly-cross-prod-multp}.  As for $x\ot 1, x'\ot 1, x''\ot h''\in \G{M}\ot \C{H}$, we have on one hand
\begin{align*}
& (x\ot 1)[(x'\ot 1)(x''\ot h'')] = (x\ot 1)(x'\ps{1}\cdot x''\ps{1} \ot \t(x'\ps{2}, x''\ps{2})\ast h'') = \\
& x\ps{1}\cdot [x'\ps{1}\cdot x'\ps{1}] \ot \t(x\ps{2}, x'\ps{2}\cdot x''\ps{2})\ast  [\t(x'\ps{3}, x''\ps{3})\ast h''],
\end{align*}
while on the other hand,
\begin{align*}
& [(x\ot 1)(x'\ot 1)](x''\ot h'') = (x\ps{1}\cdot x'\ps{1}\ot \t(x\ps{2},x'\ps{2}))(x''\ot h'') = \\
&  [x\ps{1}\cdot x'\ps{1}]\cdot [\vp(\t(x\ps{3}, x'\ps{3}), x''\ps{1})\cdot \g(\psi(\t(x\ps{4}, x'\ps{4}), x''\ps{2}), h''\ps{1})] \ot \\
& \hspace{3cm} [\t(x\ps{2}\cdot x'\ps{2}, \vp(\t(x\ps{5}, x'\ps{5}), x''\ps{3}))\ast \psi(\t(x\ps{6}, x'\ps{6}), x''\ps{4})] \ast h''\ps{2}.
\end{align*}
Accordingly,
\[
(x\ot 1)[(x'\ot 1)(x''\ot h'')]  =  [(x\ot 1)(x'\ot 1)](x''\ot h'')
\]
yields \eqref{phi-map-Hopf-assoc-III} and \eqref{theta-map-Hopf-cocycle-III}. 

We, then, proceed onto the mixed associativity condition for $x\ot h, e\ot h', e\ot h''\in \G{M}\ot \C{H}$. To this end, we have
\begin{align*}
&[(x\ot h)(e\ot h')](e\ot h'') = (x\cdot\g(h\ps{1},h'\ps{1}) \ot h\ps{2}\ast h'\ps{2})(e\ot h'') = \\
&  [x\cdot\g(h\ps{1},h'\ps{1})]\cdot \g(h\ps{2}\ast h'\ps{2}, h''\ps{1})\ot [h\ps{3}\ast h'\ps{3}]\ast h''\ps{2},
\end{align*}
and
\begin{align*}
& (x\ot h)[(e\ot h')(e\ot h'')] = (x\ot h)(\g(h'\ps{1}, h''\ps{1}) \ot h'\ps{2} \ast h''\ps{2}) = \\
& x\ps{1}\cdot [\vp(h\ps{1}, \g(h'\ps{1}, h''\ps{1})) \cdot \g(\psi(h\ps{2},\g(h'\ps{2}, h''\ps{2})), h'\ps{5}\ast h''\ps{5})] \ot \\
& \hspace{3cm} [\t(x\ps{2},\vp(h\ps{3},\g(h'\ps{3}, h''\ps{3})))\ast \psi(h\ps{4},\g(h'\ps{4}, h''\ps{4}))]\ast [h'\ps{6}\ast h''\ps{6}].
\end{align*}
Hence,
\[
[(x\ot h)(e\ot h')](e\ot h'')= (x\ot h)[(e\ot h')(e\ot h'')]
\]
implies \eqref{gamma-map-Hopf-cocycle-III} and \eqref{gamma-map-Hopf-assoc-III}. 

Next, we observe for $x\ot h, e\ot h', x'' \ot 1\in \G{M}\ot \C{H}$ that
\begin{align*}
& [(x\ot h)(e\ot h')](x''\ot 1) = (x\cdot \g(h\ps{1},h'\ps{1}) \ot h\ps{2}\ast h'\ps{2})(x''\ot 1) = \\
&[x\ps{1}\cdot \g(h\ps{1},h'\ps{1})] \cdot \vp(h\ps{3}\ast h'\ps{3}, x''\ps{1}) \ot \t(x\ps{2}\cdot \g(h\ps{2},h'\ps{2}),\vp(h\ps{4}\ast h'\ps{4}, x''\ps{2}))\ast \psi(h\ps{5}\ast h'\ps{5}, x''\ps{3}),
\end{align*}
and that
\begin{align*}
& (x\ot h)[(e\ot h')(x''\ot 1)] =(x\ot h)(\vp(h'\ps{1},x''\ps{1})\ot \psi(h'\ps{2},x''\ps{2})) = \\
& x\ps{1}\cdot [\vp(h\ps{1},\vp(h'\ps{1},x''\ps{1}))\cdot \g(\psi(h\ps{2},\vp(h'\ps{2},x''\ps{2})),\psi(h'\ps{5},x''\ps{5}))] \ot \\
& \hspace{3cm} [\t(x\ps{2},\vp(h\ps{3},\vp(h'\ps{3},x''\ps{3})))\ast \psi(h\ps{4},\vp(h'\ps{4},x''\ps{4}))]\ast \psi(h'\ps{6},x''\ps{6}).
\end{align*}
Therefore, we conclude \eqref{right-action-Hopf-on-multp-III} and \eqref{M-is-left-H-module-III}  from
\[
[(x\ot h)(e\ot h')](x''\ot 1)  = (x\ot h)[(e\ot h')(x''\ot 1)]. 
\]
Lastly, for $e\ot h, x'\ot 1, x''\ot h''\in \G{M}\ot\C{H}$ we consider the associativity through
\begin{align*}
& (e \ot h)[(x' \ot 1)(x'' \ot h'')] = (e\ot h)(x'\ps{1}\cdot x''\ps{1}\ot \t(x'\ps{2},x''\ps{2})\ast h'') = \\
&\vp(h\ps{1},x'\ps{1}\cdot x''\ps{1})\cdot \g(\psi(h\ps{2},x'\ps{2}\cdot x''\ps{2}),\t(x'\ps{4},x''\ps{4})\ast h''\ps{1}) \ot \\
& \hspace{6cm} \psi(h\ps{3},x'\ps{3}\cdot x''\ps{3})\ast [\t(x'\ps{5},x''\ps{5})\ast h''\ps{2}],
\end{align*}
and
\begin{align*}
& [(e \ot h)(x' \ot 1)](x'' \ot h'') = (\vp(h\ps{1},x'\ps{1})\ot \psi(h\ps{2},x'\ps{2}))(x'' \ot h'')  = \\
& \vp(h\ps{1},x'\ps{1})\cdot[\vp(\psi(h\ps{3},x'\ps{3}),x''\ps{1})\cdot \g(\psi(\psi(h\ps{4},x'\ps{4}),x''\ps{2}),h''\ps{1})] \ot \\
& \hspace{3cm} [\t(\vp(h\ps{2},x'\ps{2}),\vp(\psi(h\ps{5},x'\ps{5}),x''\ps{3}))\ast \psi(\psi(h\ps{6},x'\ps{6}),x''\ps{4})] \ast h''\ps{2},
\end{align*}
and we conclude that 
\[
(e \ot h)[(x' \ot 1)(x'' \ot h'')] =[(e \ot h)(x' \ot 1)](x'' \ot h'') 
\]
gives \eqref{left-action-Hopf-on-multp-III} and \eqref{right-action-theta-comp-Hopf-III}. 

Finally, we check the multiplicativity of the comultiplication $\D:\C{G}\to \C{G}\ot \C{G}$. On one hand we have 
\begin{align*}
& \D((x\ot h)(x'\ot h')) = \\
& \D\Big(x\ps{1}\cdot \vp(h\ps{1}, x'\ps{1})\cdot \g(\psi(h\ps{2},x'\ps{2}),h'\ps{1}) \ot \t(x\ps{2},\vp(h\ps{3}, x'\ps{3}))\ast \psi(h\ps{4},x'\ps{4})\ast h'\ps{2}\Big) = \\
& \Big(x\ps{1}\cdot \vp(h\ps{1}, x'\ps{1})\cdot \g(\psi(h\ps{3},x'\ps{3}),h'\ps{1}) \ot \t(x\ps{3}, \vp(h\ps{5}, x'\ps{5}))\ast \psi(h\ps{7},x'\ps{7})\ast h'\ps{3}\Big) \ot \\
& \hspace{2cm} \Big(x\ps{2}\cdot\vp(h\ps{2}, x'\ps{2})\cdot\g(\psi(h\ps{4},x'\ps{4}),h'\ps{2}) \ot \t(x\ps{4}, \vp(h\ps{6}, x'\ps{6}))\ast\psi(h\ps{8},x'\ps{8})\ast h'\ps{4}\Big),
\end{align*}
and on the other hand
\begin{align*}
& \D(x\ot h)\D(x'\ot h') = \Big((x\ps{1}\ot h\ps{1}) \ot (x\ps{2}\ot h\ps{2})\Big)\Big((x'\ps{1}\ot h'\ps{1}) \ot (x'\ps{2}\ot h'\ps{2})\Big) = \\
& (x\ps{1}\ot h\ps{1})(x'\ps{1}\ot h'\ps{1}) \ot (x\ps{2}\ot h\ps{2})(x'\ps{2}\ot h'\ps{2}) = \\
& \Big(x\ps{1}\vp(h\ps{1}, x'\ps{1})\g(\psi(h\ps{2},x'\ps{2}),h'\ps{1}) \ot \t(x\ps{2}, \vp(h\ps{3}, x'\ps{3}))\psi(h\ps{4},x'\ps{4})h'\ps{2}\Big) \ot \\
& \hspace{2cm} \Big(x\ps{3}\vp(h\ps{5}, x'\ps{5})\g(\psi(h\ps{6},x'\ps{6}),h'\ps{3}) \ot \t(x\ps{4}, \vp(h\ps{7}, x'\ps{7}))\psi(h\ps{8},x'\ps{8})h'\ps{4}\Big).
\end{align*}
Thus, we see at once that
\[
\D((x\ot 1)(e\ot h')) = \D(x\ot 1)\D(e\ot h').
\]
Accordingly, \eqref{comultp-cross-prod-II} is multiplicative if and only if
\begin{equation}\label{h-h'-Delta}
\D((e\ot h)(e\ot h')) = \D(e\ot h)\D(e\ot h'),
\end{equation}
as well as
\begin{equation}\label{h-x'-Delta}
\D((e\ot h)(x'\ot 1)) = \D(e\ot h)\D(x'\ot 1)
\end{equation}
and
\begin{equation}\label{x-x'-Delta}
\D((x\ot 1)(x'\ot 1)) = \D(x\ot 1)\D(x'\ot 1).
\end{equation}
Applying the counits on the left-most and the right-most components of \eqref{h-x'-Delta} yields \eqref{left-right-action-comultp-II}, while its application to \eqref{x-x'-Delta} gives \eqref{theta-phi-comp-II}. Finally, we obtain \eqref{gamma-mu-comp-II} applying the counits on the left-most and the right-most components of \eqref{h-h'-Delta}. Conversely, \eqref{left-right-action-comultp-II}, \eqref{theta-phi-comp-II}, and \eqref{gamma-mu-comp-II} together imply the multiplicativity of \eqref{comultp-cross-prod-II}.
\end{proof}

We shall denote the bialgebra of Proposition \ref{prop-bicocycle-double-cross-prod} by $\G{M} {\,}_{\g\hspace{-0.1cm}}\bowtie_\t \C{H} :=\G{M}\ot \C{H}$, and call it the \emph{bicocycle double cross product} bialgebra of the coalgebras $\G{M}$ and $\C{H}$.

\begin{remark}
The dual construction $\G{M} \,{}^{\lambda\hspace{-.15cm}}\dcc^\s \C{H}$ of a \emph{bicocycle double cross coproduct} bialgebra, generalizing $\G{M} \dcc^\s \C{H}$ of the previous subsection, may be built similarly.
\end{remark}

We now record here a comparison with the earlier constructions.

\begin{remark}
Let us note that if \eqref{left-action-Hopf-2} is a (left) action, then its being a map of coalgebras is equivalent to $\G{M}$ being a (left) $\C{H}$-module coalgebra. Similarly, if  \eqref{right-action-Hopf-2} happens to be a (right) action, then $\C{H}$ becomes a (right) $\G{M}$-module coalgebra. In this case, \eqref{left-action-Hopf-on-multp-III}, \eqref{right-action-Hopf-on-multp-III}, and \eqref{left-right-action-comultp-II} are nothing but (7.7), (7.8) and (7.9) of \cite[Def. 7.2.1]{Majid-book}. If, furthermore, both \eqref{theta-map-Hopf-2} and \eqref{gamma-map-Hopf} are trivial, then Proposition \ref{prop-bicocycle-double-cross-prod} above coincides with  \cite[Thm. 7.2.2]{Majid-book}.
\end{remark}

\begin{remark}
If \eqref{left-action-Hopf-2} and \eqref{gamma-map-Hopf} are trivial, then the algebra $\G{M} {\,}_{\g\hspace{-0.1cm}}\bowtie_\t \C{H}$ given in Proposition \ref{prop-bicocycle-double-cross-prod} is nothing but $\G{M} \ltimes_\theta\C{H}$ of \cite[Prop. 6.3.7]{Majid-book}, since in this case (6.27) follows from \eqref{right-action-Hopf-on-multp-III}, (6.28) from \eqref{right-action-theta-comp-Hopf-III}, and (6.29) from \eqref{theta-map-Hopf-cocycle-III}. If, furthermore, \eqref{right-action-Hopf-2} is a right action and the multiplication \eqref{mu-map-Hopf} on $\C{H}$ is trivial; that is, given by the addition (in other words, $\C{H}$ is regarded only as a vector space), then \eqref{theta-map-Hopf-cocycle-III} indicates that \eqref{theta-map-Hopf-2} is a 2-cocycle in the algebra Hochschild cohomology of $\G{M}$ with coefficients in $\C{H}$. In short,
\[
\t\in H^2(\G{M},\C{H}).
\]
Similarly, if \eqref{right-action-Hopf-2} and \eqref{theta-map-Hopf-2} are trivial, then the bicocycle double cross product algebra $\G{M} {\,}_{\g\hspace{-0.1cm}}\bowtie_\t \C{H}$ of Proposition \ref{prop-bicocycle-double-cross-prod} becomes $\G{M} {\,}_{\g\hspace{-0.1cm}}\rtimes \C{H}$ of \cite[Prop. 6.3.2]{Majid-book}. If, moreover, \eqref{left-action-Hopf-2} is trivial, and the multiplication \eqref{phi-map-Hopf-2} on $\G{M}$ is trivial, then \eqref{gamma-map-Hopf-cocycle-III} yields
\[
\g\in H^2(\C{H},\G{M}).
\]
\end{remark}

\begin{remark}
If \eqref{theta-map-Hopf-2} is trivial, then $\G{M} {\,}_{\g\hspace{-0.1cm}}\bowtie_\t \C{H} = \G{M} {\,}_{\g\hspace{-0.1cm}}\bowtie\C{H}$ is precisely the unified product in \cite[Thm. 2.4]{AgorMili11}. If, on the other hand, \eqref{gamma-map-Hopf} is trivial, then $\G{M} {\,}_{\g\hspace{-0.1cm}}\bowtie_\t \C{H} = \G{M} \bowtie_\t \C{H}$ of Proposition \ref{prop-cocycle-double-cross-prod}, which is the right-handed counterpart of the one in \cite[Thm. 2.4]{AgorMili11}. In other words, the bicocycle double cross product construction collects, under a sigle roof, the right handed and the left handed unified products.
\end{remark}

We shall conclude the present subsection with an upgrade of Corollary \ref{prop-U-cocycle-double-cross-sum} to the level of bicocycle double cross sum Lie algebras. However, to this end we shall need the following analogue of Proposition \ref{prop-universal-I}.

\begin{proposition}\label{prop-universal-II}
Given two coalgebras $\G{M}$ and $\C{H}$, a bialgebra $\C{G}$, and the maps
\[
\xymatrix{
\G{M} \ar@{^{(}->}[r]_i  & \C{G}    &  \ar@{_{(}->}[l]^j \C{H},
}
\]
in the category of coalgebras, if $\mu\circ (i\ot j):\G{M}\ot \C{H} \to\C{G}$ is an isomorphism (of coalgebras), where $\mu:\C{G}\ot \C{G}\to \C{G}$ denotes the multiplication in $\C{G}$, then $\C{G}\cong \G{M} {\,}_{\g\hspace{-0.1cm}}\bowtie_\t \C{H}$ as bialgebras. In this case, the maps \eqref{left-action-Hopf-2} - \eqref{gamma-map-Hopf} are obtained by 
\[
hx = \vp(h\ps{1}, x\ps{1}) \psi(h\ps{2},x\ps{2}), \qquad xx'=(x\ps{1}\cdot x'\ps{1}) \t(x\ps{2},x'\ps{2}), \qquad hh' = \g(h\ps{1}, h'\ps{1}) (h\ps{2}\ast h'\ps{2}).
\]
\end{proposition}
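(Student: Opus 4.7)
The plan is to mimic the group-level argument given for Proposition \ref{prop-universal-II-gr}, replacing smooth maps between manifolds by coalgebra morphisms, and then invoke Proposition \ref{prop-bicocycle-double-cross-prod} to convert the bialgebra axioms of $\C{G}$ into the compatibility conditions of a bicocycle double cross product pair. Throughout, I will write $\mu_\C{G}:\C{G}\ot\C{G}\to\C{G}$ for the multiplication and use that $\mu_\C{G}$, $i$, $j$, and the given isomorphism are all coalgebra maps.

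First, using $(\mu_\C{G}\circ(i\ot j))^{-1}$ I will transport the bialgebra structure of $\C{G}$ back to $\G{M}\ot\C{H}$, so that the comultiplication there becomes the tensor product comultiplication \eqref{comultp-cross-prod-II} automatically. The three ``mixed'' multiplications are then encoded by the coalgebra morphisms
\begin{align*}
f &:= (\mu_\C{G}\circ(i\ot j))^{-1}\circ\mu_\C{G}\circ(j\ot i):\C{H}\ot\G{M}\to\G{M}\ot\C{H},\\
g &:= (\mu_\C{G}\circ(i\ot j))^{-1}\circ\mu_\C{G}\circ(i\ot i):\G{M}\ot\G{M}\to\G{M}\ot\C{H},\\
r &:= (\mu_\C{G}\circ(i\ot j))^{-1}\circ\mu_\C{G}\circ(j\ot j):\C{H}\ot\C{H}\to\G{M}\ot\C{H},
\end{align*}
each a composition of coalgebra maps. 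I will then define the six structure maps as the components of $f,g,r$ against the two tensor factors:
\[
\vp := (\Id\ot\ve)\circ f,\ \psi := (\ve\ot\Id)\circ f,\ \phi := (\Id\ot\ve)\circ g,\ \t := (\ve\ot\Id)\circ g,\ \g := (\Id\ot\ve)\circ r,\ \mu := (\ve\ot\Id)\circ r.
\]
Since $f,g,r$ are coalgebra morphisms, the Sweedler decompositions of their images reproduce exactly the recovery formulas $hx=\vp(h\ps{1},x\ps{1})\psi(h\ps{2},x\ps{2})$, $xx'=\phi(x\ps{1},x'\ps{1})\t(x\ps{2},x'\ps{2})$, and $hh'=\g(h\ps{1},h'\ps{1})\mu(h\ps{2},h'\ps{2})$ stated in the proposition.

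Next, the distinguished group-likes required to form a bicocycle double cross product pair will be pulled back from the unit of $\C{G}$: the unique preimage of $1_\C{G}$ under the coalgebra isomorphism is a group-like of $\G{M}\ot\C{H}$, hence of the form $e\ot 1$ with $e\in\G{M}$ and $1\in\C{H}$ group-likes, and the normalisation identities \eqref{basics} follow by applying counits to the defining identities $j(h)i(e)=j(h)$, $j(1)i(x)=i(x)$, $i(e)i(e)=i(e)$, $j(1)j(1)=j(1)$ inside $\C{G}$. With these in hand, a straightforward (if notationally dense) rearrangement of $i(x)j(h)i(x')j(h')$ in $\C{G}$, using the three identities above to move each $j$-factor past each following $i$-factor, then collapsing adjacent $i$-pairs and $j$-pairs, gives the transported product
\[
(x\ot h)(x'\ot h') = x\ps{1}\cdot[\vp(h\ps{1},x'\ps{1})\cdot\g(\psi(h\ps{2},x'\ps{2}),h'\ps{1})]\ot[\t(x\ps{2},\vp(h\ps{3},x'\ps{3}))\ast\psi(h\ps{4},x'\ps{4})]\ast h'\ps{2},
\]
matching \eqref{mult-bicocycly-cross-prod-multp}.

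At this stage, $(\G{M},\C{H})$ equipped with the six maps above is a candidate bicocycle double cross product pair whose associated product agrees with the transported product on $\G{M}\ot\C{H}$. Since this transported structure is, by construction, a bialgebra (being isomorphic to $\C{G}$), Proposition \ref{prop-bicocycle-double-cross-prod} immediately forces all of the identities \eqref{phi-mu-map-Hopf-II}--\eqref{gamma-mu-comp-II}; conversely, the same proposition then yields that $\G{M}\,{}_{\g\hspace{-0.1cm}}\bowtie_\t\C{H}$ is a bialgebra isomorphic to $\C{G}$ via $\mu_\C{G}\circ(i\ot j)$. The only genuinely delicate step is the bookkeeping in the rearrangement producing \eqref{mult-bicocycly-cross-prod-multp}: one must track the four Sweedler indices on $h$ and on $x'$ carefully, using coassociativity and the coalgebra-morphism property of $\vp,\psi$ to align the resulting tensor legs with the stated formula. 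This is the routine-but-subtle calculation that justifies omitting the explicit verification, exactly as in the proof of Proposition \ref{prop-universal-cross-coprod}.
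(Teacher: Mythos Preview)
Your proposal is correct and follows essentially the same route as the paper's own proof: defining the three coalgebra morphisms $f,g,r$ by pulling back the multiplication of $\C{G}$ through the isomorphism $\mu_\C{G}\circ(i\ot j)$, extracting the six structure maps as their components via $\Id\ot\ve$ and $\ve\ot\Id$, and then reading off the compatibility conditions from the transported bialgebra structure by way of Proposition \ref{prop-bicocycle-double-cross-prod}. Your treatment is in fact slightly more explicit than the paper's in two places---the extraction of the group-likes $e\ot 1$ from the preimage of $1_\C{G}$, and the description of the rearrangement yielding \eqref{mult-bicocycly-cross-prod-multp}---but the underlying argument is identical.
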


\begin{proof}
Along the lines of \cite[Thm. 7.2.3]{Majid-book}, there are three (coalgebra) morphisms 
\begin{equation}\label{f-map-II}
f:\C{H}\ot \G{M} \to \G{M}\ot \C{H}
\end{equation}
given by
\[
j(h)i(x) = \Big(\mu\circ (i\ot j) \circ f \Big)(h\ot x),
\]
and  
\begin{equation}\label{g-map-II}
g:\G{M}\ot \G{M} \to \G{M}\ot \C{H}
\end{equation}
given by
\[
i(x)i(x') = \Big(\mu\circ (i\ot j) \circ g \Big)(x\ot x'),
\]
together with
\begin{equation}\label{r-map-II}
r:\C{H}\ot \C{H} \to \G{M}\ot \C{H}
\end{equation}
given by
\[
j(h)j(h') = \Big(\mu\circ (i\ot j) \circ r \Big)(h\ot h').
\]
Accordingly, we define 
\begin{align}\label{maps-I-II}
\begin{split}
& \vp:\C{H}\ot \G{M}\to \G{M}, \qquad \vp:= (\Id\ot \ve)\circ f, \\
& \psi:\C{H}\ot \G{M}\to \C{H}, \qquad \psi:= (\ve\ot \Id)\circ f, \\
& \phi:\G{M}\ot \G{M}\to \G{M}, \qquad \phi:= (\Id\ot \ve)\circ g, \\
& \t:\G{M}\ot \G{M}\to \C{H}, \qquad \t:= (\ve\ot \Id)\circ g, \\
& \mu:\C{H}\ot \C{H}\to \C{H}, \qquad \mu:= (\ve\ot \Id)\circ r, \\
& \g:\C{H}\ot \C{H}\to \G{M}, \qquad \g:= (\Id\ot \ve)\circ r.
\end{split}
\end{align}
The explicit expressions of the maps \eqref{maps-I-II} are obtained from the fact that \eqref{f-map-II}, \eqref{g-map-II}, and \eqref{r-map-II} are coalgebra morphisms. More precisely, the application of $\Id \ot \ve \ot \ve \Id:\G{M}\ot \C{H} \ot \G{M}\ot \C{H} \to \G{M}\ot \C{H}$ to
\[
\D_{\G{M}\ot \C{H}} \circ f = (f\ot f) \circ \D_{\C{H}\ot \G{M}}
\]
yields
\[
f(h\ot x) = \vp(h\ps{1},x\ps{1}) \ot \psi(h\ps{2},x\ps{2}),
\]
while its application to
\[
\D_{\G{M}\ot \C{H}} \circ g = (g\ot g) \circ \D_{\G{M}\ot \G{M}}
\]
gives
\[
g(x\ot x') = \phi(x\ps{1},x'\ps{1}) \ot \t(x\ps{2},x'\ps{2}).
\]
Finally, the application of $\Id \ot \ve \ot \ve \Id:\G{M}\ot \C{H} \ot \G{M}\ot \C{H} \to \G{M}\ot \C{H}$ to 
\[
\D_{\G{M}\ot \C{H}} \circ r = (r\ot r) \circ \D_{\G{M}\ot \G{M}}
\]
implies
\[
r(h\ot h') = \g(h\ps{1},h'\ps{1}) \ot \mu(h\ps{2},h'\ps{2}).
\]
Therefore, the multiplication on $\G{M}\ot \C{H}$ which is borrowed from $\C{G}$ is given by
\begin{align*}
& (e\ot h)(x\ot 1) := \vp(h\ps{1}, x\ps{1}) \ot \psi(h\ps{2},x\ps{2}), \\
& (x\ot 1)(x'\ot 1) := \phi(x\ps{1}, x'\ps{1})\ot \t(x\ps{2},x'\ps{2}), \\
& (e\ot h)(e\ot h') := \g(h\ps{1}, h'\ps{1})\ot \mu(h\ps{2},h'\ps{2}),
\end{align*}
with the unit being $e\ot 1\in \G{M}\ot \C{H}$, while the coalgebra structure on $\G{M}\ot \C{H}$ is defined to be the tensor product coalgebra structure. 

To conclude; we impose the associativity and the unitality of the algebra structure on $\G{M}\ot \C{H}$, as well as the multiplicativity of the coalgebra structure maps, to derive \eqref{basics}, while \eqref{phi-mu-map-Hopf-II}-\eqref{gamma-mu-comp-II} are all satisfied by the maps of \eqref{maps-I-II}. 
\end{proof}

We are now ready to discuss the universal enveloping algebra $U(\G{g})$ of a bicocycle double cross sum Lie algebra $\G{g}:=\G{m}{\,}_{\g\hspace{-0.1cm}}\bowtie_\t\G{h}$.

\begin{corollary}\label{prop-U-bicocycle-double-cross-sum}
Given the cocycle double cross sum Lie algebra $\G{g}:=\G{m} {\,}_{\g\hspace{-0.1cm}}\bowtie_\t\G{h}$, let $\{\xi_1,\ldots,\xi_n\}$ be a basis for $\G{g}$, so that; $\{\xi_1,\ldots,\xi_m\}$ is a basis for $\G{m}$, and $\{\xi_{m+1},\ldots,\xi_n\}$ is a basis for $\G{h}$. Let also
\[
\widetilde{U}(\G{m}) = \Big\{\sum_{r_1,\ldots, r_m \geq 0}\,\a_{r_1,\ldots,r_m}\,\xi_1^{r_1}\ldots \xi_m^{r_m}\mid \a_{r_1,\ldots,r_m} \in k\Big\}
\]
and
\[
\widetilde{U}(\G{h}) = \Big\{\sum_{r_{m+1},\ldots, r_n \geq 0}\,\a_{r_{m+1},\ldots,r_n}\,\xi_{m+1}^{r_{m+1}}\ldots \xi_n^{r_n}\mid \a_{r_{m+1},\ldots,r_n} \in k\Big\}.
\]
Then, 
\[
U(\G{m} {\,}_{\g\hspace{-0.1cm}}\bowtie_\t\G{h}) \cong \widetilde{U}(\G{m}) {\,}_{\g\hspace{-0.1cm}}\bowtie_\t \widetilde{U}(\G{h})
\]
as bialgebras.
\end{corollary}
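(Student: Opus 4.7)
The plan is to apply Proposition \ref{prop-universal-II} in the same spirit as Corollary \ref{prop-U-cocycle-double-cross-sum} applied Proposition \ref{prop-universal-I}. The crucial difference is that in the bicocycle setting neither $\G{m}$ nor $\G{h}$ is required to be a Lie subalgebra of $\G{g}$, so neither $\widetilde{U}(\G{m})$ nor $\widetilde{U}(\G{h})$ can be realized as a sub-bialgebra of $U(\G{g})$; this is exactly why one must appeal to the weaker Proposition \ref{prop-universal-II}, which asks only for a coalgebra decomposition. By the Poincar\'e--Birkhoff--Witt theorem applied to the ordered basis $\{\xi_1,\ldots,\xi_n\}$, the ordered monomials $\xi_1^{r_1}\cdots \xi_n^{r_n}$ form a vector space basis of $U(\G{g})$; splitting this basis at the cutoff $m$ exhibits the multiplication map $\widetilde{U}(\G{m})\ot \widetilde{U}(\G{h}) \to U(\G{g})$ as a linear isomorphism, and the unit $1\in U(\G{g})$ lies in both $\widetilde{U}(\G{m})$ and $\widetilde{U}(\G{h})$, serving as the distinguished group-likes required by Proposition \ref{prop-universal-II}.

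Next, I would verify that $\widetilde{U}(\G{m})$ and $\widetilde{U}(\G{h})$ are subcoalgebras of $U(\G{g})$. Since $\D(\xi_i) = \xi_i\ot 1 + 1\ot \xi_i$, the elements $\xi_i\ot 1$ and $1\ot \xi_j$ commute pairwise in $U(\G{g})\ot U(\G{g})$, and $\D$ is an algebra morphism, a binomial expansion yields
\[
\D(\xi_1^{r_1}\cdots \xi_m^{r_m}) = \sum_{0\leq k_i\leq r_i}\,\prod_{i=1}^{m}\binom{r_i}{k_i}\,(\xi_1^{k_1}\cdots \xi_m^{k_m})\ot (\xi_1^{r_1-k_1}\cdots \xi_m^{r_m-k_m}),
\]
so $\D(\widetilde{U}(\G{m}))\subseteq \widetilde{U}(\G{m})\ot \widetilde{U}(\G{m})$; an identical argument handles $\widetilde{U}(\G{h})$. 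Since $\D$ is multiplicative on $U(\G{g})$, the linear isomorphism of the previous step is in fact an isomorphism of coalgebras with respect to the tensor product coalgebra structure on the domain.

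Finally, having verified the hypotheses, Proposition \ref{prop-universal-II} yields $U(\G{g}) \cong \widetilde{U}(\G{m}) {\,}_{\g\hspace{-0.1cm}}\bowtie_\t \widetilde{U}(\G{h})$ as bialgebras, the six bialgebra-level structure maps \eqref{left-action-Hopf-2}--\eqref{gamma-map-Hopf} being read off from the formulas at the end of that proposition. The only step requiring any genuine work is the subcoalgebra verification, which the binomial computation above handles cleanly; reusing the symbols $\g$ and $\t$ for the bialgebra-level cocycles is the harmless abuse of notation already flagged in the remark following Corollary \ref{prop-U-cocycle-double-cross-sum}.
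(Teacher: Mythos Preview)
Your proposal is correct and follows essentially the same approach as the paper: both argue that $\widetilde{U}(\G{m})$ and $\widetilde{U}(\G{h})$ are subcoalgebras of $U(\G{g})$ and then invoke Proposition \ref{prop-universal-II}. Your version is simply more explicit, supplying the PBW and binomial-expansion details that the paper leaves to the reader, and you correctly identify why Proposition \ref{prop-universal-II} (rather than Proposition \ref{prop-universal-I}) is the right tool here.
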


\begin{proof}
It follows at once that both $\widetilde{U}(\G{m})$ and $\widetilde{U}(\G{h})$ are subcoalgebras of $U(\G{g})$. That is, we have 
\[
\xymatrix{
\widetilde{U}(\G{m}) \ar@{^{(}->}[r]_i  & U(\G{g})    &  \ar@{_{(}->}[l]^j_{\rm alg}  \widetilde{U}(\G{h}).
}
\]
The claim then follows from Proposition \ref{prop-universal-II}. 
\end{proof}

\section*{Acknowledgment}

OE and SS acknowledge the support by T\"UB\.ITAK (the Scientific and Technological Research Council of Turkey) through the project ``Matched pairs of Lagrangian and Hamiltonian Systems'' with the project number 117F426. PG is grateful to Alberto Elduque for the illuminating discussions on the earlier phases of this work.

\bibliographystyle{plain}
\bibliography{references}{}

\end{document}